\newcommand*{\da@rightarrow}{\mathchar"0\hexnumber@\symAMSa 4B }
\newcommand*{\da@leftarrow}{\mathchar"0\hexnumber@\symAMSa 4C }
\newcommand*{\xdashrightarrow}[2][]{%
  \mathrel{%
    \mathpalette{\da@xarrow{#1}{#2}{}\da@rightarrow{\,}{}}{}%
  }%
}
\newcommand{\xdashleftarrow}[2][]{%
  \mathrel{%
    \mathpalette{\da@xarrow{#1}{#2}\da@leftarrow{}{}{\,}}{}%
  }%
}
\newcommand*{\da@xarrow}[7]{%
  \sbox0{$\ifx#7\scriptstyle\scriptscriptstyle\else\scriptstyle\fi#5#1#6\m@th$}%
  \sbox2{$\ifx#7\scriptstyle\scriptscriptstyle\else\scriptstyle\fi#5#2#6\m@th$}%
  \sbox4{$#7\dabar@\m@th$}%
  \dimen@=\wd0 %
  \ifdim\wd2 >\dimen@
    \dimen@=\wd2 %
  \fi
  \count@=2 %
  \def\da@bars{\dabar@\dabar@}%
  \@whiledim\count@\wd4<\dimen@\do{%
    \advance\count@\@ne
    \expandafter\def\expandafter\da@bars\expandafter{%
      \da@bars
      \dabar@
    }%
  }%
  \mathrel{#3}%
  \mathrel{%
    \mathop{\da@bars}\limits
    \ifx\\#1\\%
    \else
      _{\copy0}%
    \fi
    \ifx\\#2\\%
    \else
      ^{\copy2}%
    \fi
  }%
  \mathrel{#4}%
}
\newcommand{\xar}{\xrightarrow{\hspace{1.8em}}}
\newcommand{\xarr}[1]{\xrightarrow{\hspace{0.9em}#1\hspace{0.9em}}}
\newcommand{\Ind}{\operatorname{Ind}}
\newcommand{\Exc}{\operatorname{Exc}}
\newcommand{\red}{{\mathrm{red}}}
\newcommand{\Sing}{\operatorname{Sing}}
\newcommand{\Aut}{\operatorname{Aut}}
\newcommand{\kod}{\varkappa} 
\newcommand{\q}{\operatorname{q}} 
\newcommand{\alb}{\operatorname{\upalpha}}
\newcommand{\Alb}{\operatorname{Alb}}
\newcommand{\Bim}{\operatorname{Bim}}
\newcommand{\kf}{\operatorname{\uppi}}
\newcommand{\ad}{\operatorname{a}}
\newcommand{\g}{\operatorname{g}}
\newcommand{\p}{\operatorname{p}_{\mathrm{a}}}
\newcommand{\pg}{\operatorname{p}_{\mathrm{g}}}
\newcommand{\chit}{\chi_{\mathrm{top}}}
\newcommand{\rb}{\operatorname{b}}
\newcommand{\ZZ}{\mathbb{Z}}
\newcommand{\PP}{\mathbb{P}}
\newcommand{\QQ}{\mathbb{Q}}
\newcommand{\OOO}{\mathscr{O}}
\newcommand{\comp}{\mathbin{\scriptstyle{\circ}}}
\def \ge {\geqslant}
\def \le {\leqslant}
\theoremstyle{plain}
\newtheorem{theorem}[subsection]{Theorem}
\newtheorem{lemma}[subsection]{Lemma}
\newtheorem{proposition}[subsection]{Proposition}
\newtheorem{corollary}[subsection]{Corollary}
\theoremstyle{definition}
\newtheorem{definition}[subsection]{Definition}
\newtheorem*{definition*}{Definition}
\newtheorem{remark}[subsection]{Remark}
\newcommand{\xref}[1]{{\rm \ref{#1}}}
\title{Finite groups of bimeromorphic selfmaps of non-uniruled K\"ahler threefolds}
\author{Yu.~G.~Prokhorov}
\author{C.~A.~Shramov}
 \address{\emph{Yuri Prokhorov}
 \newline
 \textnormal{Steklov Mathematical Institute of RAS,
 8 Gubkina street, Moscow 119991, Russia.
 }
 \newline
 \textnormal{
 HSE University, Russian Federation,
 Laboratory of Algebraic Geometry, 6 Usacheva str., Moscow, 119048, Russia.
 }
 \newline
 \textnormal{\texttt{prokhoro@mi-ras.ru}}}
 \address{\emph{Constantin Shramov}
 \newline
 \textnormal{Steklov Mathematical Institute of RAS,
 8 Gubkina street, Moscow 119991, Russia.
 }
 \newline
 \textnormal{
 HSE University, Russian Federation,
 Laboratory of Algebraic Geometry, 6 Usacheva str., Moscow, 119048, Russia.
 }
 \newline
 \textnormal{\texttt{costya.shramov@gmail.com}}}
 \thanks{This work is supported by the Russian Science Foundation under grant \textnumero 18-11-00121.}
 \date{}
\begin{document}

\begin{abstract}
We prove the Jordan property for groups of bimeromorphic selfmaps of three-dimensional compact K\"ahler varieties of non-negative Kodaira dimension
and positive irregularity.
\end{abstract}

\maketitle
\tableofcontents

\section{Introduction}

In the theory of automorphism groups of complex varieties, the following notion plays an important
role.

\begin{definition}[{\cite[Definition~2.1]{Popov2011}}]
\label{definition:Jordan}
A group~$\Gamma$ is called \emph{Jordan}
(alternatively, one says that~$\Gamma$ has \emph{Jordan property}),
if there exists a constant $J=J(\Gamma)$ such that any finite subgroup~\mbox{$G\subset\Gamma$}
contains a normal abelian subgroup $A\subset G$ of index at most~$J$.
\end{definition}

It appears that Jordan property holds for many groups of geometric origin, including automorphism groups and groups of bimeromorphic selfmaps
of many compact complex varieties.
For instance, if $X$ is a compact complex surface, then the automorphism group of $X$ is always Jordan (see \cite[Theorem~1.6]{Prokhorov-Shramov-CCS}),
while its group of bimeromorphic selfmaps is Jordan if and only if $X$ is not bimeromorphic to a product of $\PP^1$ and an elliptic curve
(see \cite[Theorem~1.7]{Prokhorov-Shramov-CCS}).
As for higher dimensions, much less is understood.
For instance, it is unclear whether the automorphism group of an arbitrary compact complex variety is Jordan or not;
a positive answer is known only for the neutral component of such a group, see \cite[Theorem~5]{Popov-Lie}.
However, there are several important results in the case of K\"ahler varieties.
To start with, the following theorem was proved in~\cite{Kim}.

\begin{theorem}
\label{theorem:Kim}
Let $X$ be a compact K\"ahler variety.
Then the automorphism group of~$X$ is Jordan.
\end{theorem}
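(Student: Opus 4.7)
The plan is to combine two facts already visible in the excerpt: by \cite[Theorem~5]{Popov-Lie} the neutral component $\Aut^0(X)$ is Jordan with some constant $J_0=J_0(X)$, and by the classical Lieberman--Fujiki theorem the stabilizer of any K\"ahler class in $\Aut(X)$ has only finitely many connected components.

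First I would consider the natural representation $\rho\colon \Aut(X) \to GL\bigl(H^2(X,\ZZ)\bigr)$. Since $\Aut^0(X)$ is connected and acts continuously on a discrete group, $\Aut^0(X) \subset \ker\rho$. Conversely, every element of $\ker\rho$ fixes all cohomology classes, so in particular it preserves a fixed K\"ahler class; Lieberman--Fujiki then yields that $\ker\rho / \Aut^0(X)$ is finite. A finite extension of a Jordan group is again Jordan (intersect with the Jordan subgroup and pass to the normal core of a bounded-index abelian subgroup), so $\ker\rho$ is Jordan with some constant $J_1=J_1(X)$.

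Next I would bound the image. By Minkowski's classical theorem, every finite subgroup of $GL_N(\ZZ)$ has order at most some $M=M(N)$. Setting $N=\mathrm{rk}\,H^2(X,\ZZ)$, for any finite subgroup $G\subset\Aut(X)$ the image $\rho(G)$ has order at most $M$, so $[G:G\cap\ker\rho]\le M$. Inside $G\cap\ker\rho$ one then picks a normal abelian subgroup $A$ of index at most $J_1$, and passes to its normal core $A_0$ in $G$. The subgroup $A_0$ is abelian, normal in $G$, and of index bounded in terms of $M$ and $J_1$ alone, which yields the Jordan property for $\Aut(X)$ with a constant depending only on $X$.

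The main obstacle is the Lieberman--Fujiki input, which is classical for smooth compact K\"ahler manifolds but must be extended to possibly singular compact K\"ahler varieties. One can either pass to an $\Aut(X)$-equivariant resolution of singularities, pulling back a K\"ahler class and exploiting the functoriality of the resolution, or extend the Lieberman--Fujiki argument directly to compact K\"ahler spaces; either route is standard but requires verification. The remaining ingredients -- Popov's theorem on connected groups, Minkowski's bound on finite subgroups of $GL_N(\ZZ)$, and the abstract fact that finite extensions of Jordan groups remain Jordan -- are available off the shelf.
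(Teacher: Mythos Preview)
The paper does not give its own proof of this theorem; it is quoted from \cite{Kim} as an external result. Your outline is essentially Kim's argument: act on $H^2(X,\ZZ)$, bound finite subgroups of the image by Minkowski (Corollary~\ref{corollary:Minkowski}), and control the kernel via Lieberman--Fujiki together with the Jordan property of the neutral component. You have also correctly isolated the only non-routine step, namely extending Lieberman--Fujiki to singular compact K\"ahler spaces; this is exactly the content of \cite{Kim}, and is handled there by passing to a functorial (hence $\Aut(X)$-equivariant) resolution and comparing K\"ahler classes, much as you suggest.
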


Theorem~\ref{theorem:Kim} was recently generalized to the case of automorphism groups of compact complex spaces of Fujiki's class~$\mathcal{C}$ in~\cite{MPZ}
(cf. \cite{ProkhorovShramov-Moishezon}).
In~\cite{Prokhorov-Shramov-2020}, groups of bimeromorphic selfmaps of
uniruled compact K\"ahler varieties of dimension~$3$ were studied
from the point of view of Jordan property. In higher dimensions, partial results for $\PP^1$-bundles over appropriate bases
were obtained in \cite{BandmanZarhin-Bim} and~\cite{BandmanZarhin-BimII}
(cf.~\cite{BandmanZarhin-PoorTori}). In particular, \cite{Zarhin-ToriTheta} provides plenty of examples of uniruled
compact K\"ahler varieties with non-Jordan groups of bimeromorphic selfmaps.

Denote by~$\kod(X)$
the Kodaira dimension of a compact complex variety
$X$, and by~$\q(X)$ its irregularity; we refer the reader to Section~\ref{sect:prelim} for details.
Recall that a three-dimensional compact K\"ahler variety is uniruled if and only if its Kodaira dimension is negative, see \cite[Corollary~1.4]{Hoering-Peternell-2016}:
this is a very deep fact which follows from
the results of~\cite{Brunella}, \cite{DemaillyPeternell}, and \cite{Hoering-Peternell-2016}.
The purpose of this paper is to prove the following theorem concerning the groups of bimeromorphic selfmaps of non-uniruled three-dimensional compact K\"ahler varieties;
we refer the reader to Section~\ref{sect:prelim} for notation and conventions.

\begin{theorem}
\label{theorem:main}
Let $X$ be a three-dimensional compact K\"ahler variety with~\mbox{$\kod(X)\ge 0$} and~\mbox{$\q(X)>0$}.
Then the group of bimeromorphic selfmaps of~$X$ is Jordan.
\end{theorem}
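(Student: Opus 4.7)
\medskip

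\noindent\emph{Proof proposal.}
Set $q = \q(X)$ and $A = \Alb(X)$, a complex torus of dimension $q \in \{1, 2, 3\}$. The plan is to organize the argument around the Albanese morphism $\alpha \colon X \to A$. Since $\kod(X) \ge 0$ implies $X$ is non-uniruled (as noted in the introduction), standard results for non-uniruled compact K\"ahler manifolds give that $\alpha$ is surjective with connected fibers. Moreover, $\alpha$ is canonically associated to $X$ (because $H^0(X, \Omega^1_X)$ is a bimeromorphic invariant), so any $\phi \in \Bir(X)$ descends to a biholomorphism of $A$. This yields a natural homomorphism
\[
\alpha_* \colon \Bir(X) \longrightarrow \Aut(A).
\]
The target is Jordan: it fits into a short exact sequence $1 \to A \to \Aut(A) \to \Aut(A, 0) \to 1$, in which $\Aut(A, 0)$ embeds as automorphisms of $H_1(A, \ZZ)$ into $\operatorname{GL}(2q, \ZZ)$ (and so has finite subgroups of uniformly bounded order by Minkowski), while the normal subgroup $A$ is abelian.

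If $q = 3$, then $\alpha$ is a surjection between threefolds with connected fibers, hence bimeromorphic, so $\Bir(X) \cong \Aut(A)$ and the theorem follows at once.

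For $q \in \{1, 2\}$ the fibers of $\alpha$ are positive-dimensional, and I consider the exact sequence
\[
1 \longrightarrow K \longrightarrow \Bir(X) \xrightarrow{\alpha_*} \Aut(A),
\]
with $K = \ker(\alpha_*)$. Since $X$ is non-uniruled, so is the generic fiber $F$ of $\alpha$. When $q = 2$, this forces $F$ to be a smooth curve of genus at least one; when $q = 1$, it forces $F$ to be a smooth surface of non-negative Kodaira dimension, and in particular $F$ is not bimeromorphic to $\PP^1 \times E$ for an elliptic curve $E$. Either way, $\Bir(F)$ is Jordan --- classically for curves, and by \cite[Theorem~1.7]{Prokhorov-Shramov-CCS} for surfaces. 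Elements of $K$ preserve each fiber of $\alpha$, so restriction to the generic fiber (regarded over the function field $\CC(A)$) should embed $K$ into a bimeromorphism group closely related to $\Bir(F)$, from which Jordan property of $K$ with a uniform constant should follow.

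The step I expect to be the main obstacle is combining Jordan property for the kernel and for the image into Jordan property for $\Bir(X)$, since group extensions do not preserve Jordan property in general. The proposed strategy is as follows: given a finite subgroup $G \subset \Bir(X)$, first replace $G$ by a subgroup $G'$ of uniformly bounded index whose image in $\Aut(A)$ lies in the translation subgroup $A$ (using the Minkowski bound above), and then pass to a $G'$-equivariant birational model of $\alpha$ on which $G'$ acts biregularly, exploiting the fact that the Albanese fibration is canonically attached to $X$. The critical technical point is to exhibit a normal abelian subgroup of $G' \cap K$ of uniformly bounded index that remains normal once combined with the abelian translation image; I expect this to be achievable by selecting the abelian subgroup characteristically, using explicit knowledge of the automorphism groups of minimal models of the generic fiber (an elliptic curve or a minimal surface of non-negative Kodaira dimension). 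Once in hand, this yields the required normal abelian subgroup of $G$ of uniformly bounded index, establishing Jordan property for $\Bir(X)$.
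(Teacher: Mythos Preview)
Your proposal has the right starting point---organizing the argument around the Albanese map and the induced homomorphism to $\Aut(A)$---but there are real gaps.

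First, some of the preliminary claims are false. The irregularity of a threefold can exceed~$3$, and there is no ``standard result'' guaranteeing that the Albanese map of a non-uniruled K\"ahler manifold is surjective with connected fibers: for $C$ of genus~$2$ and $S$ a K3 surface, $\alb(C\times S)$ is the curve $C$ inside the two-dimensional torus $J(C)$. The paper handles the non-surjective case and the case $\q(X)\ge 3$ separately (Lemma~\ref{lemma:Albanese}, Corollary~\ref{corollary:Albanese}) before reducing to $\q(X)\in\{1,2\}$ with $\alb$ surjective. These reductions are not hard, but they are not free.

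The serious gap is your combining step. You correctly flag that an extension of a Jordan group by a Jordan group need not be Jordan, but the proposed workaround does not close the gap. Suppose you have passed to $G'$ with abelian (translation) image on $A$, and found a characteristic abelian subgroup $H\subset G'\cap K$ of bounded index. Then $H$ is normal in $G'$, but $[G':H]=[G'\cap K:H]\cdot[G':G'\cap K]$, and the second factor---the size of the translation image---is unbounded. What you would need is an abelian normal subgroup of $G'$ that is large in both directions simultaneously; yet $H$ and a lift of the translations have no reason to commute, and the resulting picture is exactly the class-$2$ nilpotent (Heisenberg-type) situation behind the known non-Jordan examples. Nothing in your sketch rules this out, and ``selecting the abelian subgroup characteristically'' does not help.

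The paper's route is different and avoids this combination problem entirely. After running the MMP so that $K_X$ is nef and $\Bim(X)$ acts by pseudoautomorphisms (Corollary~\ref{corollary:pseudoaut}), it manufactures a $\Bim(X)$-invariant proper closed subset of the base: for $\q=1$, the locus of singular fibers of $\alb$, shown to be preserved via an analysis of indeterminacy loci and rational curves (Lemmas~\ref{lemma:chi=0} and~\ref{lemma:ind-new}); for $\q=2$, the image of $\Sing(X)$ or of the two-dimensional fibers (Proposition~\ref{prop:pseudo-flips}, Corollary~\ref{corollary:Kollar}). This forces the image of $\Bim(X)$ in $\Aut(A)$ to have \emph{bounded} finite subgroups, after which the elementary Lemma~\ref{lemma:group-theory} applies. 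The whole point is to upgrade one side of the extension from ``Jordan'' to ``bounded finite subgroups'' using geometry; your argument tries to bypass this, and I do not see how to make it work.
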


The plan of the paper is as follows.
In Section~\ref{sect:prelim}
we collect basic definitions and auxiliary facts concerning compact complex
varieties and manifolds.
In Section~\ref{section:equivariant}
we study groups of bimeromorphic selfmaps that preserve
fibrations on compact complex varieties. In Section~\ref{sect:pseudo}
we consider groups of pseudoautomorphisms of compact K\"ahler manifolds.
In particular, we show in Proposition~\ref{proposition:pseudo} that the group of pseudoautomorphisms of a compact K\"ahler manifold is always Jordan;
a projective analog of this result is well known to experts.
In Section~\ref{section:Albanese}
we collect information about Albanese maps of compact complex varieties.
In Section~\ref{section:indeterminacy}
we discuss some technical facts on the indeterminacy loci of pseudoutomorphisms.
In Section~\ref{section:more} we make a couple of additional observations on compact complex surfaces and their automorphism groups.
Finally, in Section~\ref{section:proof}
we complete the proof of Theorem~\xref{theorem:main}.

For a non-uniruled algebraic variety $X$
of arbitrary dimension a much stronger assertion
than Theorem~\ref{theorem:main}
is valid: the group of birational selfmaps of~$X$ is Jordan without any restriction on the irregularity,
see~\mbox{\cite[Theorem~1.8(ii)]{Prokhorov-Shramov-Bir}}.
The proof of this result in \cite{Prokhorov-Shramov-Bir} uses the equivariant Minimal Model Program
(see \cite{P:G-MMP} for a brief introduction).
The proofs in this paper also use the MMP for three-dimensional compact K\"ahler varieties
which is available due to~\cite{Hoering-Peternell-2016}.
We expect that an analog of Theorem~\ref{theorem:main}  is valid in arbitrary dimension
and can be proved independently of the~MMP.

Furthermore, our proof of Jordan property for pseudoautomorphism
groups is based on Fujiki's result
from~\cite{Fujiki:bimeromorphic} (see Theorem~\ref{theorem:Fujiki-pseudo} below)
which is known  only for smooth K\"ahler varieties. It is desirable to generalize it for K\"ahler varieties with terminal singularities.
This would allow to largely simplify our arguments, and to remove the assumption that~\mbox{$\q(X)>0$} in Theorem~\xref{theorem:main}.\footnote{After this 
paper was written, A.\,Golota in~\cite{Golota} generalized 
the result of Fujiki to the case of singular K\"ahler varieties and proved a stronger 
version of Theorem~\xref{theorem:main}.}

\medskip
We are grateful to Ch.\,Hacon who pointed out a gap in the first draft of the paper.
We also thank the anonymous referee for helpful suggestions.

\section{Preliminaries}
\label{sect:prelim}

In this section we collect some auxiliary facts about compact complex
varieties and manifolds. We refer the reader to \cite{Ueno1975}
for the most basic facts and definitions.
In particular, by a \textit{complex variety} we mean an
irreducible reduced complex space. A \emph{morphism} of complex varieties is a holomorphic map between them.
A smooth complex variety is called a \textit{complex manifold}.
A \emph{complex surface} is a complex
manifold of dimension~$2$.

Given a compact complex variety $X$, by $\ad(X)$ we denote the algebraic dimension of $X$,
i.e. the transcendence degree of the field of meromorphic functions on~$X$. By $\q(X)$
we denote the irregularity of $X$, i.e. the dimension of  $H^1(X',\OOO_{X'})$,
where $X'$ is an arbitrary compact complex manifold bimeromorphic to $X$.
Similarly, the Kodaira dimension  $\kod(X)$ is defined as the
Kodaira dimension of its smooth compact model $X'$,
see~\mbox{\cite[Definition~6.5]{Ueno1975}}.
By $\Bim(X)$ we denote the group of biholomorphic selfmaps of $X$.
A \emph{Zariski open subset} of $X$ is a
subset of the form~\mbox{$X\setminus\Sigma$}, where $\Sigma$ is a closed
(analytic) subset in~$X$.
A \emph{typical point} of~$X$ is a point
of some non-empty Zariski open subset of~$X$;
a \emph{typical fiber}
of a (meromorphic) map~\mbox{$\phi\colon X\dasharrow Y$}
is a fiber over a typical point of~$Y$.
For a meromorphic map $\chi\colon X\dasharrow Y$,
we denote by $\Ind(\chi)$ the \emph{indeterminacy locus}
of $\chi$, i.e. the minimal closed analytic subset $V\subset X$ such that the restriction of $\chi$ to $X\setminus V$
is holomorphic.

Recall that the canonical class of a normal  complex variety $X$ is the reflexive rank one
sheaf
$$
\upomega_X=j_* \Omega_{X_0}^{\dim(X)},
$$
where $X_0\subset X$ is the smooth locus and
$j\colon X_0 \hookrightarrow X$ is the embedding,
see \cite[\S1A]{Reid:can-3-folds}.
Note that  in contrast with the projective case the canonical class~$\upomega_X$ need not be represented by a Weil divisor.
However,  $\upomega_X$ is always represented by a Weil divisor locally,
in  a small analytic neighborhood of any point. Thus,
sometimes we will abuse notation and
write $K_X$ instead of $\upomega_X$.
Note also that throughout this paper we consider
varieties of non-negative Kodaira dimension; for such a variety~$X$ the reflexive sheaf
$$
\upomega_X^{[n]}=\big(\upomega_X^{\otimes n}\big)^{\vee\vee}
$$
is represented by a Weil divisor $nK_X$ for some positive integer~$n$.
Indeed, for such a divisor one can take~\mbox{$nK_X=f_* D$}, where~\mbox{$f\colon\tilde X\to X$} is a resolution of singularities, and $D$ is an effective divisor from non-empty linear system~\mbox{$|nK_{\tilde X}|$}.

For the  basic terminology, definitions, and facts concerning the K\"ahler Minimal Model Program
we refer to \cite{Hoering-Peternell-2016}.
We emphasize only the following distinctions.
Since the canonical class~$\upomega_X$ need not be represented by a Weil divisor,
in the case of non-algebraic complex varieties
the definition of a normal
$\QQ$-factorial singularity includes the requirement that
the sheaf $\upomega_X^{[n]}$ is invertible for some $n$.
We remind the reader that a terminal singularity is a normal $\QQ$-Gorenstein singularity of a complex variety
such that all of its discrepancies are positive.
In particular, if a variety $X$ has only terminal (or just normal $\QQ$-Gorenstein) singularities,
the intersection numbers of $\upomega_X$ with all the curves on $X$ are well defined.
A canonical singularity is a normal $\QQ$-Gorenstein singularity of a complex variety
such that all of its discrepancies are non-negative.
All canonical (and in particular terminal) singularities are rational, see e.g. \cite[Theorem~5.22]{KollarMori}.

One says that a group $\Gamma$ has  \emph{bounded finite subgroups}
if there exists a constant $B=B(\Gamma)$
such that every finite subgroup of $\Gamma$
has order at most $B$.

\begin{lemma}
\label{lemma:group-theory}
Let
$$
1\xar\Gamma'\xar\Gamma\xar\Gamma''
$$
be an exact sequence of groups.
Suppose that the group $\Gamma''$ has bounded finite subgroups.
Then the group $\Gamma$ is Jordan
if and only if the group $\Gamma'$ is Jordan.
\end{lemma}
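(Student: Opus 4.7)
The forward direction is essentially automatic: any finite subgroup of $\Gamma'$ is also a finite subgroup of $\Gamma$, so the Jordan constant $J(\Gamma)$ works for $\Gamma'$ as well. My attention will therefore focus on the backward implication.

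For the hard direction, suppose $\Gamma'$ is Jordan with constant $J$ and every finite subgroup of $\Gamma''$ has order at most $B$. Let $G\subset\Gamma$ be an arbitrary finite subgroup, and set $G'=G\cap\Gamma'$. Since $\Gamma'$ is normal in $\Gamma$, the subgroup $G'$ is normal in $G$, and the quotient $G/G'$ embeds into $\Gamma''$, so $[G:G']\le B$. Applying the Jordan property of $\Gamma'$ to the finite group $G'$, I obtain an abelian normal subgroup $A\subset G'$ with $[G':A]\le J$.

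The only remaining issue is that $A$ need not be normal in $G$. I resolve this by passing to
\[
\tilde A \;=\; \bigcap_{g\in G} gAg^{-1}.
\]
By construction $\tilde A$ is normal in $G$; since $\tilde A\subset A$ and $A$ is abelian, $\tilde A$ is abelian; and each conjugate $gAg^{-1}\subset G'$ still has index at most $J$ in $G'$. The number of distinct conjugates is~\mbox{$[G:N_G(A)]\le [G:A]=[G:G'][G':A]\le BJ$}, and the standard estimate $[K:H_1\cap\dots\cap H_k]\le\prod_i[K:H_i]$ (coming from the injection of $K/\bigcap H_i$ into $\prod K/H_i$) gives $[G':\tilde A]\le J^{BJ}$. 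Hence $[G:\tilde A]\le B\cdot J^{BJ}$, a bound depending only on $B$ and $J$, which establishes that $\Gamma$ is Jordan.

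The only real obstacle is the passage from normality in $G'$ to normality in $G$, which the intersection-of-conjugates trick handles cleanly; everything else is bookkeeping with indices.
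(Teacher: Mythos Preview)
Your argument is correct and is precisely the standard proof the authors had in mind when they wrote ``Obvious.'' As a minor sharpening: since $A$ is already normal in $G'$, you have $G'\subset N_G(A)$, so there are at most $[G:G']\le B$ distinct conjugates, yielding the slightly better bound $[G:\tilde A]\le B\cdot J^{B}$; but your estimate is of course sufficient.
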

\begin{proof}
Obvious.
\end{proof}

A group $\Gamma$ is called \emph{strongly Jordan} if it is Jordan and there exists a constant $r=r(\Gamma)$ such that
every finite subgroup of~$\Gamma$ is generated by at most~$r$ elements.

\begin{lemma}[{\cite[Lemma~2.8]{Prokhorov-Shramov-Bir}}]
\label{lemma:group-theory-2}
Let
$$
1\xar\Gamma'\xar\Gamma\xar\Gamma''
$$
be an exact sequence of groups.
Suppose that the group $\Gamma''$ is strongly Jordan and the group~$\Gamma'$ has bounded finite
subgroups.
Then the group $\Gamma$ is Jordan.
\end{lemma}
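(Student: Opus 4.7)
The plan is to unpack a finite subgroup $G \subset \Gamma$ via the exact sequence, using strong Jordanness on the image in $\Gamma''$ and the bounded-order hypothesis on the kernel part, and then to adjust the resulting abelian subgroup so as to make it normal in $G$.

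Let $B = B(\Gamma')$, $J = J(\Gamma'')$, and $r = r(\Gamma'')$. Given a finite subgroup $G \subset \Gamma$, set $G' = G \cap \Gamma'$ and let $G''$ be the image of $G$ in $\Gamma''$; then $|G'| \le B$. Strong Jordanness applied to $G''$ yields a normal abelian subgroup $A'' \trianglelefteq G''$ of index at most $J$ and generated by at most $r$ elements. Let $\tilde H \subset G$ be the preimage of $A''$, so that $[G:\tilde H] \le J$ and there is an exact sequence $1 \to G' \to \tilde H \to A'' \to 1$.

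The conjugation action of $\tilde H$ on $G'$ lands in $\Aut(G')$, which has order at most $B!$, so its kernel $C$ has index at most $B!$ in $\tilde H$. By construction $C$ centralizes $G'$; in particular $C \cap G' \subseteq Z(G')$, and $C/(C\cap G')$ embeds in $A''$, so it is abelian and generated by at most $r$ elements. Hence $C$ is nilpotent of class at most $2$ with commutator subgroup $[C,C] \subseteq Z(G')$ of order at most $B$. I would choose lifts $c_1,\dots,c_r \in C$ of generators of $C/(C\cap G')$. Since $[C,C]$ is central, for each $i$ the map $x \mapsto [x,c_i]$ is a homomorphism $C \to [C,C]$, so the centralizer $C_C(c_i)$ has index at most $|[C,C]| \le B$ in $C$. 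Setting $C^{\circ} := \bigcap_{i=1}^{r} C_C(c_i)$ gives $[C:C^{\circ}] \le B^{r}$. Every element of $C$ can be written as a product $c_1^{a_1}\cdots c_r^{a_r} z$ with $z \in Z(G') \subseteq Z(C)$, so any element of $C^{\circ}$ commutes with all of $C$; hence $C^{\circ} \subseteq Z(C)$ is abelian.

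At this point $C^{\circ}$ is an abelian subgroup of $G$ of index at most $N := J \cdot B! \cdot B^{r}$, but a priori not normal in $G$. To finish I would take its normal core $A = \bigcap_{g\in G} g C^{\circ} g^{-1}$, the kernel of the permutation action of $G$ on the coset space $G/C^{\circ}$; then $A$ is normal in $G$, abelian as a subgroup of $C^{\circ}$, and $[G:A] \le N!$, giving a Jordan constant for $\Gamma$ depending only on $B$, $J$, and $r$. The expected main obstacle is the class-$2$ nilpotent group-theoretic step producing $C^{\circ}$: it is essential that $C$ is not merely an extension of a bounded-generation abelian group by a bounded finite group, but in fact has central commutator subgroup of bounded order, so that commutators are bilinear and centralizer indices can be simultaneously controlled.
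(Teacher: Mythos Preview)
The paper does not give its own proof of this lemma; it simply cites \cite[Lemma~2.8]{Prokhorov-Shramov-Bir}. Your argument is correct and is essentially the standard one appearing in that reference: pull back the normal abelian subgroup $A''$ to $\tilde H$, pass to the centralizer $C$ of the bounded-order kernel $G'$ to obtain a nilpotent group of class at most~$2$ with commutator subgroup of order at most $B$, use the bounded generation of $A''$ to intersect $r$ centralizers and obtain an abelian subgroup $C^{\circ}$ of bounded index, and finish with the normal core. All steps are justified; in particular the inclusion $Z(G')\subseteq Z(C)$ you invoke holds because $C$ centralizes $G'$ by construction.
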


The following classical theorem was proved by H.\,Minkowski.

\begin{theorem}[{see e.g.~\cite[Theorem~1]{Serre2007}}]
\label{theorem:Minkowski}
For every positive integer
$n$ the group~{$\mathrm{GL}_n(\mathbb{Z})$} has bounded finite subgroups.
\end{theorem}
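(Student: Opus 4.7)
The plan is to invoke the classical reduction-modulo-$p$ argument going back to Minkowski. The idea is to fix a prime $p \ge 3$ and consider the ring homomorphism $\ZZ \to \FF_p$, which induces a group homomorphism
$$
\rho_p \colon \mathrm{GL}_n(\ZZ) \longrightarrow \mathrm{GL}_n(\FF_p).
$$
Since the target is a finite group of order $(p^n-1)(p^n-p)\cdots(p^n-p^{n-1})$, it suffices to prove that $\ker\rho_p$ is torsion-free: any finite subgroup of $\mathrm{GL}_n(\ZZ)$ will then inject into $\mathrm{GL}_n(\FF_p)$, yielding the uniform bound $B(n) = |\mathrm{GL}_n(\FF_p)|$.

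To establish torsion-freeness of $\ker\rho_p$, I would argue by contradiction. Suppose $M \in \ker\rho_p$ has prime order $q$ and $M \ne I$. Write $M = I + p^k N$ with $N \in \mathrm{Mat}_n(\ZZ)$ and $k \ge 1$ maximal, so that $N$ is not divisible by $p$. Expanding $M^q = I$ via the binomial theorem and cancelling the identity yields a relation of the form
$$
q\, p^k N + \binom{q}{2} p^{2k} N^2 + \cdots + p^{qk} N^q = 0.
$$
The core of the argument is then to divide this identity by the largest power of $p$ one can extract on the left, reduce modulo $p$, and deduce $N \equiv 0 \pmod p$, contradicting the maximality of $k$.

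The step I expect to require the most care is distinguishing the case $q \ne p$, where the leading term $qp^kN$ instantly forces $N\equiv 0 \pmod p$, from the case $q = p$. In the latter case the leading coefficient $qp^k = p^{k+1}$ carries an extra factor of $p$, so one must divide through by $p^{k+1}$ rather than $p^k$ and then verify that the remaining binomial coefficients $\binom{p}{j}p^{(j-1)k}$, once rescaled, remain divisible by $p$. This is precisely where the assumption $p \ge 3$ enters: for $p = 2$ the element $-I \in \mathrm{GL}_n(\ZZ)$ already lies in $\ker\rho_2$ and has order $2$, so the claim fails. Taking $p = 3$ yields the explicit bound $B(n) = |\mathrm{GL}_n(\FF_3)|$ and completes the proof.
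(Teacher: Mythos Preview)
Your argument is the classical Minkowski reduction-mod-$p$ proof and it is correct as written; the binomial expansion is legitimate because $I$ commutes with $p^kN$, the case split $q\neq p$ versus $q=p$ is handled properly, and you correctly isolate the reason $p=2$ fails (namely, the exponent $(p-1)k-1$ in the last term can drop to zero).

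There is nothing to compare with, however, because the paper does not prove this theorem at all: it is stated with an external reference (to Serre's survey) and used as a black box. So your write-up is not an alternative to the paper's proof but rather a self-contained justification of a result the authors chose to quote. If your goal is to match the paper, a one-line citation suffices; if your goal is to make the exposition self-contained, what you have is exactly the standard argument and is fine as is.
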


\begin{corollary}
\label{corollary:Minkowski}
Let $\Lambda$ be a finitely generated abelian group.
Then the group $\Aut(\Lambda)$
has bounded finite subgroups.
\end{corollary}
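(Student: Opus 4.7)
The plan is to reduce to Theorem~\ref{theorem:Minkowski} by isolating the torsion.

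By the structure theorem for finitely generated abelian groups, I would write $\Lambda\cong \ZZ^{r}\oplus T$, where~$T$ is the (finite) torsion subgroup of~$\Lambda$. Since~$T$ is characteristic in~$\Lambda$ and the quotient $\Lambda/T\cong\ZZ^{r}$ is torsion-free, every automorphism of~$\Lambda$ preserves~$T$ and induces an automorphism of~$\Lambda/T$. Fixing the splitting above and using that $\Hom(T,\ZZ^{r})=0$, any element of $\Aut(\Lambda)$ can be written as
$$
\begin{pmatrix} A & 0 \\ C & B \end{pmatrix},
\qquad
A\in\mathrm{GL}_{r}(\ZZ),\quad B\in\Aut(T),\quad C\in\Hom(\ZZ^{r},T).
$$

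From this description I would extract an exact sequence
$$
1\xar \Hom(\ZZ^{r},T)\xar \Aut(\Lambda)\xar \mathrm{GL}_{r}(\ZZ)\times\Aut(T)\xar 1.
$$
The kernel $\Hom(\ZZ^{r},T)\cong T^{r}$ is finite, and the quotient $\mathrm{GL}_{r}(\ZZ)\times\Aut(T)$ has bounded finite subgroups: the first factor by Theorem~\ref{theorem:Minkowski}, and the second because $T$ (hence $\Aut(T)$) is finite.

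Finally, I would observe the elementary fact that in any extension $1\to K\to G\to Q\to 1$ with $K$ finite and $Q$ having bounded finite subgroups, the group~$G$ also has bounded finite subgroups (any finite subgroup $H\subset G$ has order at most $|K|\cdot B(Q)$). Applying this to the sequence above concludes the proof. There is no real obstacle here — the only choice to make is whether to phrase the argument via the explicit matrix description or invoke Lemma~\ref{lemma:group-theory}; the matrix form seems cleanest since it makes the kernel visibly finite without appealing to Jordan-type statements.
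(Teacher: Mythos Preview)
Your argument is correct. The paper states Corollary~\ref{corollary:Minkowski} without proof, treating it as an immediate consequence of Theorem~\ref{theorem:Minkowski}; your reduction via the characteristic torsion subgroup and the resulting finite-by-(bounded finite subgroups) extension is precisely the standard way to make this implication explicit.
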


The following assertion is well known, see e.g. \cite[Proposition~1.2.1]{BL} or \cite[Theorem~8.4]{Prokhorov-Shramov-CCS}.

\begin{theorem}\label{theorem:torus-Aut}
Let $T$ be a complex torus of dimension $n$. Then
$$
\Aut(T)\cong T\rtimes\Gamma,
$$
where $\Gamma$ is a subgroup of $\mathrm{GL}_{2n}(\mathbb{Z})$.
\end{theorem}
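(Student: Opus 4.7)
The plan is to realize $T$ as $V/\Lambda$, where $V$ is an $n$-dimensional complex vector space and $\Lambda\subset V$ is a lattice of rank $2n$, and to split $\Aut(T)$ into the subgroup of translations (which is canonically identified with $T$ itself) and the subgroup $\Aut_0(T)\subset\Aut(T)$ of biholomorphisms fixing the origin $0\in T$.

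First I would observe that any biholomorphism $f\colon T\to T$ can be written uniquely as $f=t_a\comp\phi$, where $t_a$ is the translation by $a=f(0)$ and $\phi=t_{-a}\comp f\in\Aut_0(T)$. Since the subgroup of translations is normal in $\Aut(T)$ (because conjugation by $\phi\in\Aut_0(T)$ sends $t_a$ to $t_{\phi(a)}$, using that $\phi$ will turn out to be a group automorphism of $T$), this yields a split short exact sequence and identifies $\Aut(T)$ with the semidirect product $T\rtimes\Aut_0(T)$.

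Next I would show that $\Aut_0(T)$ injects into $\mathrm{GL}_{2n}(\ZZ)$. Lift $\phi\in\Aut_0(T)$ to a biholomorphism $\tilde\phi\colon V\to V$ with $\tilde\phi(0)=0$, which is possible because $V$ is simply connected. For every $\lambda\in\Lambda$, the map $z\mapsto\tilde\phi(z+\lambda)-\tilde\phi(z)$ is a holomorphic function on $V$ with values in the discrete set $\Lambda$, hence is a constant $\rho(\lambda)\in\Lambda$. Differentiating this relation shows that the partial derivatives of $\tilde\phi$ are $\Lambda$-periodic holomorphic functions on $V$; they therefore descend to holomorphic functions on the compact torus $T$ and so are constant. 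This forces $\tilde\phi$ to be affine, and as it fixes $0$, it is $\CC$-linear. The same argument applied to $\phi^{-1}$ shows that $\tilde\phi$ is an isomorphism of $V$ preserving the lattice $\Lambda$, so its restriction to $\Lambda$ yields an element of $\Aut(\Lambda)\cong\mathrm{GL}_{2n}(\ZZ)$.

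Finally I would set $\Gamma\subset\mathrm{GL}_{2n}(\ZZ)$ to be the image of $\Aut_0(T)$ under the map $\phi\mapsto\tilde\phi|_\Lambda$. Injectivity is immediate, since a $\CC$-linear, hence $\RR$-linear, map on $V$ is determined by its values on $\Lambda\otimes_\ZZ\RR=V$; this gives $\Aut_0(T)\cong\Gamma$ and hence $\Aut(T)\cong T\rtimes\Gamma$ as desired. The only non-formal point in the argument is the lifting step that exploits the compactness of $T$ to force the derivatives of $\tilde\phi$ to be constant; once this is done, the rest is purely algebraic.
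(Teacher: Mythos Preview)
The paper does not prove this theorem; it states it as a well-known fact and cites \cite{BL} and \cite{Prokhorov-Shramov-CCS} for a proof. Your argument is correct and is essentially the standard one found in those references: write $T=V/\Lambda$, decompose any automorphism as a translation followed by an origin-fixing map, lift the latter to the universal cover $V$, and use compactness of $T$ to force the lift to be $\CC$-linear and lattice-preserving, whence $\Aut_0(T)\hookrightarrow\Aut(\Lambda)\cong\mathrm{GL}_{2n}(\ZZ)$. The only cosmetic point is that you invoke normality of the translation subgroup before establishing that elements of $\Aut_0(T)$ are group automorphisms; since you flag this forward reference explicitly and the linearity argument that follows supplies it, there is no logical gap.
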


The next result follows from
\cite[Lemma~9.11]{Ueno1975} and \cite[Lemma~3.5]{Graf2018}.

\begin{proposition}
\label{proposition:torus-holomorphic}
Let $X$ be a compact complex variety with rational singularities,
and let $T$ be a complex torus.
Let $\zeta\colon X\dashrightarrow T$ be a meromorphic map. Then $\zeta$ is holomorphic.
\end{proposition}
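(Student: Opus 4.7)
The plan is to reduce the statement to the smooth case by means of a resolution of singularities, apply the cited theorem of Ueno for smooth sources, and then descend the resulting holomorphic map from the resolution back to $X$ using the rational singularities hypothesis. This naturally splits the argument into two halves, corresponding precisely to the two cited lemmas.

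First, I would pick a resolution of singularities $f\colon \tilde X\to X$, with $\tilde X$ a smooth compact complex variety; such a resolution exists since $X$ is a compact complex variety and we may apply Hironaka. Composing with $\zeta$ gives a meromorphic map $\tilde\zeta=\zeta\comp f\colon \tilde X\dashrightarrow T$. By \cite[Lemma~9.11]{Ueno1975} any meromorphic map from a smooth compact complex variety to a complex torus is holomorphic, so $\tilde\zeta$ is in fact a morphism $\tilde X\to T$. The point here is that the indeterminacy locus $\Ind(\tilde\zeta)\subset\tilde X$ has codimension at least two, and germs of meromorphic maps to a torus cannot truly have indeterminacy in codimension $\ge 2$ because the torus carries no meromorphic functions with poles along such loci (one may lift locally to the universal cover and appeal to Hartogs).

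Next, I would descend $\tilde\zeta$ through $f$ to produce a holomorphic map $X\to T$ restricting to $\zeta$ on the locus where the latter was originally defined. This is exactly the content of \cite[Lemma~3.5]{Graf2018}: since $X$ has rational singularities, in particular $f_*\OOO_{\tilde X}=\OOO_X$ and $R^1f_*\OOO_{\tilde X}=0$, and these sheaf-theoretic identities force $\tilde\zeta$ to be constant on every fiber of $f$ (a connected compact analytic space with trivial $\OOO$-cohomology in low degree admits only constant maps to a torus) and then allow the resulting set-theoretic factorization to be promoted to a genuine holomorphic map $\eta\colon X\to T$. On the Zariski open set $U\subset X$ where $f$ is an isomorphism and $\zeta$ is already holomorphic, $\eta$ agrees with $\zeta$, hence $\eta$ is the desired holomorphic extension of $\zeta$ to all of $X$.

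The main obstacle is the descent step: while the existence of a set-theoretic factorization of $\tilde\zeta$ through $f$ is not hard to believe once fibers are shown to be contracted, promoting it to a \emph{holomorphic} map on a singular $X$ is exactly what rational singularities are designed for, via $f_*\OOO_{\tilde X}=\OOO_X$. Everything else is essentially formal — resolution is standard, Ueno's theorem handles the smooth case, and the comparison with $\zeta$ on $U$ is automatic by uniqueness of meromorphic extensions. Thus the proof reduces to an invocation of the two cited lemmas in succession.
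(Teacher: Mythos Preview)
Your proposal is correct and follows exactly the route indicated by the paper, which does not give an explicit proof but simply records that the result follows from \cite[Lemma~9.11]{Ueno1975} and \cite[Lemma~3.5]{Graf2018}. You have spelled out precisely how these two inputs combine: Ueno's lemma to make the lifted map $\tilde\zeta$ holomorphic on the resolution, and Graf's lemma to descend it through $f$ using the rational singularities hypothesis.
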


\begin{corollary}
\label{corollary:torus}
Let $T$ be a complex torus of dimension $n$.
Then the group $\Bim(T)$ is Jordan.
Moreover, there exists a
positive integer $r=r(n)$ which depends only on~$n$ but not on~$T$ such that every finite subgroup
of $\Bim(T)$ is generated by at most~$r$ elements. In particular, $\Bim(T)$ is strongly Jordan.
\end{corollary}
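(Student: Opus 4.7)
The plan is to first identify $\Bim(T)$ with $\Aut(T)$, then exploit the structural description of $\Aut(T)$ as an extension of a finite-index-controlled discrete group by the torus itself.

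First I would note that since $T$ is smooth, it has rational singularities, so Proposition~\ref{proposition:torus-holomorphic} applies: every meromorphic map $T \dashrightarrow T$ is automatically holomorphic. Given $\phi \in \Bim(T)$, apply this to both $\phi$ and $\phi^{-1}$ to conclude that $\phi$ is biholomorphic. Hence $\Bim(T) = \Aut(T)$, and it suffices to analyze finite subgroups of~$\Aut(T)$.

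Next, by Theorem~\ref{theorem:torus-Aut} one has a semidirect product decomposition
$$
\Aut(T) \cong T \rtimes \Gamma, \qquad \Gamma \subset \mathrm{GL}_{2n}(\mathbb{Z}).
$$
In particular there is a short exact sequence
$$
1 \xar T \xar \Aut(T) \xar \Gamma \xar 1.
$$
By Theorem~\ref{theorem:Minkowski}, the group $\Gamma$ has bounded finite subgroups, with a bound $B = B(n)$ depending only on $n$. Since $T$ is an abelian group, it is Jordan (trivially, with Jordan constant $1$). Lemma~\ref{lemma:group-theory} then gives that $\Aut(T)$ is Jordan, with Jordan constant depending only on~$n$.

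For the refinement to strongly Jordan, I would argue directly. Let $G\subset\Aut(T)$ be a finite subgroup, write $G' = G \cap T$ and $G'' = G/G' \hookrightarrow \Gamma$. Since $|G''| \le B(n)$, the quotient $G''$ is generated by at most $B(n)$ elements. The subgroup $G'$ is a finite abelian subgroup of the complex torus $T$, hence sits inside the $m$-torsion $T[m]\cong(\ZZ/m\ZZ)^{2n}$ for some $m$, and is therefore generated by at most $2n$ elements. Combining generators of $G'$ with lifts of generators of $G''$, we see that $G$ is generated by at most $r(n) := 2n + B(n)$ elements, a bound depending only on~$n$. This gives the strongly Jordan property and completes the proof.

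I do not anticipate a real obstacle here: the main input, Proposition~\ref{proposition:torus-holomorphic}, collapses $\Bim(T)$ to $\Aut(T)$, after which everything is a formal consequence of Minkowski's theorem and the split structure of $\Aut(T)$.
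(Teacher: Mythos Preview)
Your proof is correct and follows essentially the same route as the paper: identify $\Bim(T)=\Aut(T)$ via Proposition~\ref{proposition:torus-holomorphic}, then use the semidirect product structure $\Aut(T)\cong T\rtimes\Gamma$ from Theorem~\ref{theorem:torus-Aut} together with Minkowski's bound (Theorem~\ref{theorem:Minkowski}) and Lemma~\ref{lemma:group-theory}. The paper states that the bounded-generation assertion ``follows directly'' from Theorems~\ref{theorem:torus-Aut} and~\ref{theorem:Minkowski} without spelling out the details; your explicit count via $G'\subset T[m]\cong(\ZZ/m\ZZ)^{2n}$ and $|G''|\le B(n)$ is exactly the argument one would supply, so nothing is genuinely different.
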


\begin{proof}
One has $\Bim(T)=\Aut(T)$ by Proposition~\ref{proposition:torus-holomorphic}.
Thus the first assertion follows from
Theorems~\ref{theorem:torus-Aut}
and~\ref{theorem:Minkowski} together with Lemma~\ref{lemma:group-theory}
(alternatively, it can be obtained from
Theorem~\ref{theorem:Kim}). The second assertion follows directly from
Theorems~\ref{theorem:torus-Aut}
and~\ref{theorem:Minkowski}.
\end{proof}

\begin{theorem}[{see \cite[Corollary~14.3]{Ueno1975}}]
\label{theorem:general-type}
Let $X$ be a compact complex variety with~\mbox{$\dim(X)=\kod(X)$}.
Then the group $\Bim(X)$ is finite.
\end{theorem}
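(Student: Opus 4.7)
The plan is to embed $\Bim(X)$ into the automorphism group of a projective variety of general type via the pluricanonical map, and then invoke the classical finiteness of such automorphism groups.

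First I would reduce to the smooth case. Let $f\colon X'\to X$ be a resolution of singularities (Hironaka); then $\Bim(X)\cong\Bim(X')$ and $\kod(X')=\dim(X')$, so I may replace $X$ by $X'$ and assume $X$ is a smooth compact complex manifold of general type. For some sufficiently divisible $n>0$ the pluricanonical map
$$
\phi_n\colon X\dasharrow Y_n\subset\PP\bigl(H^0(X,\upomega_X^{\otimes n})^\vee\bigr)
$$
is then bimeromorphic onto a projective image $Y_n$.

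Next I would linearise the action. Every $g\in\Bim(X)$ pulls back global pluricanonical forms: the indeterminacy locus of a bimeromorphic selfmap of a smooth compact complex manifold has codimension at least two, so the pullback $g^*\omega$ of any $\omega\in H^0(X,\upomega_X^{\otimes n})$ extends across $\Ind(g)$ by Hartogs. This produces a linear action of $\Bim(X)$ on the finite-dimensional space $H^0(X,\upomega_X^{\otimes n})$, and passing to projectivisations yields a homomorphism
$$
\rho\colon\Bim(X)\longrightarrow\Aut(Y_n),
$$
characterised by $\rho(g)\comp\phi_n=\phi_n\comp g$. I claim $\rho$ is injective: if $g\in\ker\rho$, then $\phi_n\comp g$ agrees with $\phi_n$ as meromorphic maps, but $\phi_n$ is an isomorphism onto its image over a non-empty Zariski open subset $U\subset X$, so $g|_U=\mathrm{id}_U$ and hence $g=\mathrm{id}$ in $\Bim(X)$.

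It remains to verify that $\rho(\Bim(X))$ is finite, and this is the only real content. The projective variety $Y_n$ is bimeromorphic to $X$ and is therefore itself of general type, so the finiteness of $\rho(\Bim(X))\subset\Aut(Y_n)$ reduces to the classical theorem that the full biregular automorphism group of a projective variety of general type is finite. Here the identity component $\Aut^0(Y_n)$ is trivial, because a non-trivial connected algebraic group action on $Y_n$ would lift to a nonzero global holomorphic vector field on a smooth model, which is impossible on a variety of general type; the group of components is then bounded via the faithful action on the pluricanonical embedding together with a standard boundedness argument. I expect precisely this finiteness step to be the main obstacle, as the reduction, linearisation, and injectivity above are essentially formal.
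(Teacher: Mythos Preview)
The paper does not supply its own proof of this theorem: it is quoted as a known result with a reference to Ueno's book, so there is nothing in the paper to compare your argument against.

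Your outline is essentially the classical argument (and is close to what one finds in the cited reference). The reduction to a smooth model, the linearisation on $H^0(X,\upomega_X^{\otimes n})$ via Hartogs, and the injectivity of $\rho$ are all correct and standard. For the final step, note that you do not need the full automorphism group $\Aut(Y_n)$: the image $\rho(\Bim(X))$ already lands in the \emph{linear} automorphism group
\[
\operatorname{Lin}(Y_n)=\{g\in\mathrm{PGL}\bigl(H^0(X,\upomega_X^{\otimes n})\bigr):g(Y_n)=Y_n\},
\]
which is a Zariski-closed subgroup of $\mathrm{PGL}$ and hence an algebraic group with finitely many connected components. Thus the only substantive point, as you correctly identify, is the triviality of the identity component, i.e.\ the absence of nonzero holomorphic vector fields on a smooth model of general type; this is a classical theorem of Matsumura (projective case) and Kobayashi--Ochiai (compact complex case), and once it is invoked the proof is complete. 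Your phrase ``a standard boundedness argument'' for the component group is therefore unnecessary: finiteness of $\pi_0$ is automatic for algebraic subgroups of $\mathrm{PGL}$.
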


\begin{lemma}\label{lemma:fiber-kappa}
Let $X$ and $Y$ be  compact complex varieties,
and let $\phi\colon X\dashrightarrow Y$
be a dominant meromorphic map.
Suppose that $\kod(X)\ge 0$.
Let $F$ be a typical fiber
of $\phi$, and let $F'$ be an irreducible component of $F$.
Then $\kod(F')\ge 0$.
\end{lemma}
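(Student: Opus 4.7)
The plan is to reduce to a morphism between smooth varieties and then use adjunction on a smooth fiber component.

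First, I would pass to smooth bimeromorphic models. Choose a resolution \mbox{$\pi\colon \tilde X\to X$} such that the composition \mbox{$\tilde\phi:=\phi\comp\pi\colon\tilde X\to Y$} is holomorphic (one can further replace $Y$ by a resolution if desired, but this is not needed). Since Kodaira dimension and irregularity are bimeromorphic invariants, \mbox{$\kod(\tilde X)=\kod(X)\ge 0$}. Hence there exist a positive integer $n$ and an effective divisor $D\in|nK_{\tilde X}|$.

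Next, I would identify the smooth fiber component corresponding to $F'$. For a typical point $y\in Y$, generic smoothness (applied to the morphism $\tilde\phi$ between smooth varieties) guarantees that the fiber $\tilde F_y=\tilde\phi^{-1}(y)$ is smooth of pure dimension \mbox{$\dim X-\dim Y$}. Since the indeterminacy locus $\Ind(\phi)$ has codimension at least $2$ in $X$ and $\pi$ is an isomorphism over $X\setminus\pi(\Exc(\pi))$, for typical $y$ no component of $\phi^{-1}(y)$ is contained in $\Ind(\phi)\cup\pi(\Exc(\pi))$; in particular the given component $F'$ is the image under $\pi$ of a unique irreducible component $\tilde F'\subset\tilde F_y$, and $\pi|_{\tilde F'}\colon\tilde F'\to F'$ is bimeromorphic. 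Thus $\tilde F'$ is a smooth bimeromorphic model of $F'$, and \mbox{$\kod(F')=\kod(\tilde F')$}.

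Now I would invoke adjunction. Since $\tilde F'$ is a smooth fiber of the morphism $\tilde\phi$ between smooth varieties over a smooth point of $Y$, its normal bundle $N_{\tilde F'/\tilde X}\cong\tilde\phi^{*}T_{Y,y}$ is trivial, so from the conormal exact sequence one has
$$
K_{\tilde F'}=K_{\tilde X}|_{\tilde F'}.
$$
Each irreducible component of $D$ either dominates $Y$ (and so meets $\tilde F'$ in a proper subvariety for typical $y$) or maps to a proper analytic subset of $Y$ (and so does not meet $\tilde F_y$ for typical $y$). Therefore $\tilde F'\not\subset\Supp(D)$ for typical $y$, and the restriction $D|_{\tilde F'}$ is a well-defined effective divisor in $|nK_{\tilde F'}|$. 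Consequently \mbox{$\kod(\tilde F')\ge 0$}, and the conclusion \mbox{$\kod(F')\ge 0$} follows from bimeromorphic invariance.

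The only delicate point is the bookkeeping in the second step, namely verifying that for a typical $y$ an arbitrary component $F'$ of $\phi^{-1}(y)$ actually lifts to a component of $\tilde F_y$ rather than being lost in the exceptional or indeterminacy locus; but this is handled by the codimension bound on $\Ind(\phi)$ together with the bimeromorphy of $\pi$. The adjunction and restriction arguments are then entirely routine.
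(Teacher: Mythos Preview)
Your proof is correct and takes essentially the same approach as the paper's: reduce to a holomorphic map between smooth varieties via resolution, apply adjunction to identify $\upomega_{F'}$ with $\upomega_X\vert_{F'}$, and use that some pluricanonical divisor is effective. The paper is simply terser, replacing $X$, $Y$, $\phi$ by smooth models in one sentence rather than tracking the lift of $F'$ to $\tilde F'$ explicitly as you do.
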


\begin{proof}
Applying the resolution of singularities and indeterminacies, we may assume that
$X$ and $Y$ are smooth, and the map
$\phi$ is holomorphic. In particular,
this means that $F$ is a compact
complex manifold, and~$F'$ is a connected component of $F$.
By adjunction one has
$$
\upomega_{F'}\cong\upomega_X\vert_{F'}.
$$
Therefore, since $\upomega_X^{\otimes n}$ is represented by an effective divisor for some positive integer~$n$,
the same holds for~$\upomega_{F'}$. Thus,
we have $\kod(F')\ge 0$.
\end{proof}

For most of the compact complex surfaces, their groups of bimeromorphic selfmaps are Jordan.
More precisely, the following is known.

\begin{theorem}[{see \cite[Theorem~1.7]{Prokhorov-Shramov-CCS}}]
\label{theorem:surface}
Let $X$ be a compact complex surface
with $\kod(X)\ge 0$. Then
the group $\Bim(X)$ is strongly Jordan.
\end{theorem}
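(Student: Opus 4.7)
The plan is to reduce to the automorphism group of a minimal model and then argue case by case via the Enriques--Kodaira classification. Since $\kod(X)\ge 0$, the surface $X$ is non-uniruled, so the surface MMP yields a unique smooth minimal model $X_{\min}$ in the bimeromorphism class of~$X$; every bimeromorphic self-map of $X$ then descends to a biholomorphism of $X_{\min}$, giving a canonical isomorphism $\Bim(X)\cong\Aut(X_{\min})$. It thus suffices to prove that $\Aut(X_{\min})$ is strongly Jordan.

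If $\kod(X)=2$ then $\dim(X)=\kod(X)$, so Theorem~\ref{theorem:general-type} forces $\Aut(X_{\min})$ to be finite and hence trivially strongly Jordan. If $\kod(X)=1$, then $X_{\min}$ carries a unique elliptic fibration $\pi\colon X_{\min}\to C$ (the Iitaka fibration), and every automorphism must preserve it. This gives an exact sequence
$$
1\to K\to \Aut(X_{\min})\to\Aut(C),
$$
where $K$ acts fiberwise and injects into the automorphism group of the generic fiber, which is an elliptic curve and hence is strongly Jordan with bounded finite subgroups by Corollary~\ref{corollary:torus}. The image in $\Aut(C)$ preserves the finite set of multiple/singular fibers together with their multiplicities, so one can combine a Minkowski-type bound with Lemma~\ref{lemma:group-theory-2} to conclude.

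For $\kod(X)=0$ I would proceed through the Enriques--Kodaira classification: $X_{\min}$ is a complex torus, a K3 surface, an Enriques surface, a bielliptic surface, or a (primary or secondary) Kodaira surface. The torus case is Corollary~\ref{corollary:torus}. For K3 and Enriques surfaces the group $\Aut(X_{\min})$ acts on the finitely generated abelian group $H^2(X_{\min},\ZZ)$ with finite kernel, so Corollary~\ref{corollary:Minkowski} combined with Lemma~\ref{lemma:group-theory-2} yields the result. For bielliptic and Kodaira surfaces there is a finite \'etale cover by a complex torus, and one lifts a finite subgroup of $\Aut(X_{\min})$ through this cover to reduce to Corollary~\ref{corollary:torus} via Lemma~\ref{lemma:group-theory-2}.

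The main obstacle I anticipate is the uniformity in the \emph{strongly} part of the conclusion: bounding the number of generators of finite subgroups requires control on extensions in the exact sequences above, particularly for $\kod(X)=1$, where both the action on the base $C$ and the fiberwise action must be handled compatibly. A secondary subtlety is the non-K\"ahler Kodaira surfaces, where Theorem~\ref{theorem:Kim} does not apply directly and one must exploit the torus cover to obtain both the Jordan property and the uniform bound on the number of generators.
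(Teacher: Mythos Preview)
The paper does not give a proof of this theorem: it is quoted from \cite[Theorem~1.7]{Prokhorov-Shramov-CCS} as a known preliminary result, with no argument supplied here. So there is no proof in the present paper against which to compare your sketch.

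Your outline is nonetheless the natural strategy and is close in spirit to how the cited reference proceeds: the reduction $\Bim(X)\cong\Aut(X_{\min})$ via uniqueness of the minimal model for $\kod\ge 0$ is correct, and the Enriques--Kodaira case split is the standard route. The genuine gaps are essentially the ones you anticipate. In the $\kod=1$ case your claim that the image in $\Aut(C)$ is controlled by the set of singular/multiple fibers needs the observation that this set can only be empty when $\g(C)\ge 2$ (where $\Aut(C)$ is finite anyway); for $\g(C)\le 1$ one must invoke the canonical bundle formula to see that $\kod=1$ forces such fibers to exist. More seriously, neither Lemma~\ref{lemma:group-theory} nor Lemma~\ref{lemma:group-theory-2} propagates a generator bound through an extension: they yield \emph{Jordan}, not \emph{strongly Jordan}. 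So the ``strongly'' conclusion requires an additional argument in every case where you have not already shown that the group has bounded finite subgroups outright (as happens, for instance, for K3 and Enriques surfaces via the $H^2$ action, where kernel and image are both order-bounded). Handling the generator bound uniformly, especially for $\kod=1$ and for the non-K\"ahler Kodaira surfaces, is where the real work in the cited paper lies, and your sketch does not yet supply it.
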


\begin{remark}\label{remark:curve}
Let $X$ be a (smooth connected) compact complex curve.
It is easy to see that the group $\Bim(X)=\Aut(X)$ is strongly Jordan.
\end{remark}

If a group acts on a compact K\"ahler manifold of non-negative Kodaira dimension with a fixed point, then
it has bounded finite subgroups.

\begin{theorem}[{\cite[Theorem~1.5]{Prokhorov-Shramov-BFS}}]
\label{theorem:fixed-point-BFS}
Let $X$ be a compact K\"ahler manifold of non-negative Kodaira dimension, and let $P$
be a point on $X$. Then the stabilizer of $P$ in $\Aut(X)$
has bounded finite subgroups.
\end{theorem}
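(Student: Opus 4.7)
The plan is to bound the order of any finite subgroup $G\subset\Aut(X)$ fixing $P$ by a constant depending only on $X$.

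First I would linearize: by Bochner's linearization theorem, the compact group $G$, fixing $P$, admits local holomorphic coordinates $(z_1,\dots,z_n)$ at $P$ (with $n=\dim X$) in which it acts linearly. Combined with the identity principle (a biholomorphism of the connected manifold $X$ trivial on a neighborhood of $P$ is trivial), this yields a faithful representation
\[
G\hookrightarrow\mathrm{GL}(T_P X)\cong\mathrm{GL}_n(\mathbb{C}).
\]
Jordan's classical theorem for $\mathrm{GL}_n(\mathbb{C})$ then provides a normal abelian subgroup $A\trianglelefteq G$ of index bounded by some $J(n)$, reducing the problem to a uniform bound on $|A|$.

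After a linear change of coordinates, $A$ acts diagonally via weight characters $\chi_i\colon A\to\mathbb{C}^*$, and bounding $|A|$ is equivalent to bounding the orders of the $\chi_i$. To bring in the hypothesis $\kod(X)\ge 0$, fix $m\ge 1$ with $H^0(X,\upomega_X^{[m]})\neq 0$, decompose this finite-dimensional $A$-module into character eigenspaces, and pick a nonzero $A$-semi-invariant section $\eta$ with character $\psi\colon A\to\mathbb{C}^*$. Locally at $P$ write $\eta=f(z)(dz_1\wedge\cdots\wedge dz_n)^{\otimes m}$; the semi-invariance of $\eta$ forces every monomial $c_\alpha z^\alpha$ of $f$ to obey
\[
\chi_1(a)^{\alpha_1+m}\cdots\chi_n(a)^{\alpha_n+m}=\psi(a)\qquad\text{for all }a\in A,
\]
so the Taylor support of $f$ lies in a single affine coset of the sublattice of $\mathbb{Z}^n$ determined by the character data.

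The main obstacle is converting this local weight constraint into a uniform bound on $|A|$; here the compactness and global structure of $X$ must enter in an essential way. The approach I would take is to pass to the quotient $Y=X/A$, which has only log terminal quotient singularities: a suitable $A$-invariant power of $\eta$ (raising to the order of $\psi$) descends to a nonzero pluricanonical form on $Y$, so in particular $\kod(Y)\ge 0$. The germ of $Y$ at the image of $P$ is an abelian quotient of $(\mathbb{C}^n,0)$ with weights $(\chi_1,\dots,\chi_n)$, and a Reid--Tai type analysis applied to the descended pluricanonical form --- combined with a uniform bound on the vanishing order of $\eta$ at $P$ extracted from the finite dimension of the $A$-semi-invariant subspace of $H^0(X,\upomega_X^{[m]})$ --- should force the orders of the $\chi_i$ to be bounded in terms of $X$ alone. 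Making this final step effective, choice-free, and independent of the varying group $A$ is the technical heart of the argument.
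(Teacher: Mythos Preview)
The paper does not give its own proof of this theorem; it is quoted from \cite{Prokhorov-Shramov-BFS} as a black box, so there is no in-paper argument to compare against directly.

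Your proposal, however, has a genuine gap precisely where you flag the ``technical heart''. The local weight relation you extract from a single $A$-semi-invariant pluricanonical section $\eta$ is far too weak to bound the orders of the characters $\chi_i$. Take $X$ a complex torus and $P=0$: the pluricanonical form is a nonzero constant multiple of $(dz_1\wedge\cdots\wedge dz_n)^{\otimes m}$, so $f$ is a nonzero constant, the vanishing order at $P$ is zero, and your relation collapses to $(\chi_1\cdots\chi_n)^m=\psi$. This constrains only the determinant character and says nothing about the individual $\chi_i$. Finite subgroups fixing $0$ \emph{are} bounded in this example, but the bound comes from the lattice: such automorphisms lie in $\mathrm{GL}_{2n}(\mathbb{Z})$, and Minkowski applies. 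Your purely local setup never sees this global integral structure. The Reid--Tai suggestion does not rescue the argument: Reid--Tai controls the age $\sum_i\{a_i/r\}$ of an element, not the order $r$ itself, and the ``uniform bound on the vanishing order of $\eta$'' you hope to extract from $\dim H^0(X,\upomega_X^{[m]})$ is not available uniformly in $A$, since the semi-invariant decomposition (and hence the choice of $\eta$) changes with $A$.

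The argument in \cite{Prokhorov-Shramov-BFS} is global rather than local. For compact K\"ahler $X$ one first observes that $\Aut(X)/\Aut^0(X)$ has bounded finite subgroups (action on cohomology, the Fujiki--Lieberman theorem, and Minkowski), so it suffices to bound $|G\cap\Aut^0(X)|$. When $\kod(X)\ge 0$ the neutral component $\Aut^0(X)$ is a compact complex torus, and the stabilizer of $P$ in this torus is a fixed finite group depending only on $X$ and $P$: a positive-dimensional stabilizer would embed as a positive-dimensional compact complex subgroup of $\mathrm{GL}(T_PX)$, which is impossible. This bounds $|G|$ directly, with no local analysis of pluricanonical weights. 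If you want to pursue your line, the missing global ingredient is exactly this structural fact about $\Aut^0(X)$; once you have it, the linearization and Jordan steps become unnecessary.
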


\section{Equivariant fibrations}
\label{section:equivariant}

In this section we make several observations about groups of bimeromorphic selfmaps that preserve
fibrations on complex varieties.

Given a dominant meromorphic map $\alpha\colon X\dashrightarrow Y$ of
compact complex varieties,
we denote by~\mbox{$\Bim(X; \alpha)$} the subgroup
in $\Bim(X)$ that consists of all bimeromorphic selfmaps of $X$
which map the fibers of $\alpha$ again to the fibers of $\alpha$.
In other words, if $\chi\in\Bim(X;\alpha)$, then
for every two points $P,\, Q\in X\setminus\Ind(\chi)$ such that $\alpha(P)=\alpha(Q)$ we have $\alpha(\chi(P))=\alpha(\chi(Q))$. There is a natural
homomorphism
$$
h_\alpha\colon \Bim(X;\alpha)\xar\Bim(Y),
$$
and the map $\alpha$ is equivariant with respect to $\Bim(X;\alpha)$.
Denote by $\Bim(X)_\alpha$ the kernel of the homomorphism~$h_\alpha$.

The proof of the following lemma is similar to that of \cite[Lemma~4.1]{Prokhorov-Shramov-2020}.

\begin{lemma}
\label{lemma:fiberwise-embedding}
Let $X$ and $Y$ be compact complex varieties, and let~\mbox{$\alpha\colon X\dasharrow Y$} be a dominant meromorphic map.
Then there is a constant $I=I(\alpha)$ with the following property.
Let~\mbox{$G_i, i\in\mathbb{N}$}, be a countable family of finite subgroups in~\mbox{$\Bim(X)_{\alpha}$}.
Then there exists a reduced
fiber $F$ of the map $\alpha$, and its irreducible component $F'$ of dimension $\dim(X)-\dim(Y)$, such that in every group~$G_i$ there is a subgroup of index at most $I$ which is isomorphic to a subgroup of~\mbox{$\Bim(F')$}.
Moreover, if~\mbox{$\dim(Y)>0$}, and we are
given a countable union $\Xi$ of proper closed analytic subsets in $Y$, then the fiber $F$ can be chosen so that the point $\alpha(F)$ does not lie in~$\Xi$.
\end{lemma}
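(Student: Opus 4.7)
The plan is to resolve the indeterminacy of $\alpha$, apply Stein factorization, bound the permutation of components of the typical fiber, and embed the remaining subgroup into the bimeromorphic selfmaps of one such component.

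First, I would choose a resolution $\pi\colon\tilde X\to X$ so that $\tilde\alpha:=\alpha\comp\pi$ is a morphism $\tilde X\to Y$, and form its Stein factorization
$$
\tilde\alpha\colon\tilde X\xarr{p}Z\xarr{s}Y,
$$
where $p$ has connected fibers and $s$ is finite of degree $d$. Set $I:=d$. Every $g\in\Bim(X)_\alpha$ lifts to a bimeromorphic selfmap $\tilde g$ of $\tilde X$ preserving each fiber of $\tilde\alpha$, and by the universal property of Stein factorization descends to a bimeromorphic selfmap $\bar g$ of $Z$ satisfying $s\comp\bar g=s$. Since $Z$ is normal and $s$ is finite, $\bar g$ extends to an element of the finite group $\Aut(Z/Y)$, whose order is at most $d$. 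Let $G_i^0\subseteq G_i$ denote the kernel of the induced homomorphism $G_i\to\Aut(Z/Y)$, so that $[G_i:G_i^0]\le d$ and every element of $G_i^0$ preserves each fiber of $p$ setwise.

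Next, call a point $z\in Z$ \emph{good} if the fiber $F'_z:=p^{-1}(z)$ is smooth, irreducible, of dimension $\dim X-\dim Y$, lies in the locus where $\pi$ is an isomorphism, is disjoint from $\Ind(\tilde g)$ for every $\tilde g$ coming from an element of some $G_i^0$, satisfies $s(z)\notin\Xi$, and moreover satisfies $\tilde g|_{F'_z}\ne\mathrm{id}_{F'_z}$ for every nontrivial such $\tilde g$. The first several requirements each exclude only a proper closed analytic subset of $Z$ (by generic smoothness and upper semi-continuity of fiber dimension), and $s(z)\notin\Xi$ excludes the countable union $s^{-1}(\Xi)$ of proper closed analytic subsets of $Z$. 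For the final requirement, the locus
$$
Z_{\tilde g}:=\bigl\{z\in Z\colon p^{-1}(z)\subseteq\mathrm{Fix}(\tilde g)\bigr\}
$$
is a closed analytic subset of $Z$ by upper semi-continuity of fiber dimension applied to the proper map $p|_{\mathrm{Fix}(\tilde g)}\colon\mathrm{Fix}(\tilde g)\to Z$, and it is proper because a proper closed analytic subset of the compact $\tilde X$ cannot contain every fiber of the surjection $p$. Consequently the set of good points is the complement in $Z$ of a countable union of proper closed analytic subsets, which is Baire-dense and in particular nonempty.

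Picking a good $z$ and setting $F:=\alpha^{-1}(s(z))$, $F':=\pi(F'_z)$, the component $F'$ is irreducible of dimension $\dim X-\dim Y$ and isomorphic to $F'_z$ via $\pi$, the fiber $F$ is reduced, and the restriction map $G_i^0\to\Bim(F')$ sending $g$ to $\tilde g|_{F'_z}$ is an injective homomorphism by construction. This realizes a subgroup of index at most $I=d$ of each $G_i$ as a subgroup of $\Bim(F')$, as required. The main obstacle I expect is the verification that $Z_{\tilde g}$ is a \emph{proper} closed analytic subset of $Z$: closedness follows from the semi-continuity argument, while properness requires the observation that a proper closed analytic subset of compact $\tilde X$ cannot contain every fiber of the surjective map $p$.
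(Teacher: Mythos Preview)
Your Stein-factorization approach is a genuine alternative to the paper's direct argument and even yields a sharper constant: the paper bounds the permutation of the $N$ irreducible components of a typical fiber by a homomorphism to $\mathfrak{S}_N$, obtaining $I=N!$, whereas your homomorphism to $\Aut(Z/Y)$ gives $I=d$ with $d=N$.

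However, there is a real gap. You require $F'_z=p^{-1}(z)$ to be \emph{disjoint} from $\Ind(\tilde g)$ and to lie entirely in the locus where $\pi$ is an isomorphism, asserting that each such condition excludes only a proper closed analytic subset of $Z$. This fails once the relative dimension $\dim X-\dim Y$ is at least~$2$: the locus of bad $z$ for the first condition is $p(\Ind(\tilde g))$, and since $\tilde X$ is smooth one has $\dim\Ind(\tilde g)\le\dim X-2$, so $\dim p(\Ind(\tilde g))\le\dim X-2$, which can equal $\dim Z=\dim Y$ (e.g.\ $\dim X=3$, $\dim Y=1$, a case the paper actually needs). Likewise $\Exc(\pi)$ may dominate~$Z$. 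The remedy is to weaken both conditions to ``$F'_z$ is not \emph{contained in}'' the relevant locus; these weaker conditions do exclude only proper closed subsets of $Z$ (by the same semi-continuity argument you already give for $Z_{\tilde g}$), and they still suffice: $\tilde g|_{F'_z}$ is then a bimeromorphic selfmap of $F'_z$ (now possibly with indeterminacy), and $\pi|_{F'_z}$ is bimeromorphic onto $F'$, so $\Bim(F'_z)\cong\Bim(F')$. This is exactly the shape of the paper's condition: its set $\nabla_\gamma$ asks only that no component of the fiber lie inside $\Ind(\gamma)$. A smaller point: since $\tilde g$ is only meromorphic, for $p|_{\mathrm{Fix}(\tilde g)}$ to be proper you should define $\mathrm{Fix}(\tilde g)$ as the first projection of the intersection of the graph of $\tilde g$ with the diagonal, so that it is closed in~$\tilde X$.
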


\begin{proof}
Let $\Delta\subset Y$ be the minimal closed subset of $Y$ such that every point $P$ of $Y\setminus\Delta$ is smooth,
the fiber $\alpha^{-1}(P)$ is reduced, and every irreducible component of $\alpha^{-1}(P)$
has dimension~\mbox{$\dim(X)-\dim(Y)$}.
Then $Y\setminus \Delta$ is a dense open subset of~$Y$. Enlarging $\Delta$ if necessary, we may assume that the fibers over all the points
of $Y\setminus\Delta$
have the same number $N$ of irreducible components.
Denote
$$
\mathcal{G}=\bigcup_i G_i;
$$
thus, $\mathcal{G}$ is a countable set of elements in $\Bim(X)_\alpha$.

Let $\gamma$ be an element of the group~\mbox{$\Bim(X)_{\alpha}$}.
Consider the set~\mbox{$\nabla_\gamma\subset Y$} consisting of all the points~$P$ for which
$\Ind(\gamma)$ contains an irreducible component of the fiber~$\alpha^{-1}(P)$. Thus the map~$\gamma$ is defined in a typical point of every irreducible component of the fiber~$\alpha^{-1}(P)$ over every point~\mbox{$P\in Y\setminus \nabla_\gamma$}. Moreover, for any point
$$
P\in Y\setminus\big(\Delta\cup\nabla_\gamma\cup\nabla_{\gamma^{-1}}\big)
$$
the restriction $\gamma\vert_{F'}$ of the map $\gamma$
to every irreducible component $F'$ of the fiber $\alpha^{-1}(P)$ is a bimeromorphic map of
$F'$ to its image $\gamma(F')$, and the image $\gamma(F')$ does not coincide with an image of any other
irreducible component of~$\alpha^{-1}(P)$.

Consider the subset $D_\gamma\subset Y\setminus (\Delta\cup\nabla_\gamma)$ consisting of all the points~$P$
such that for a typical point~$Q$ of some irreducible component of the fiber $\alpha^{-1}(P)$ one has $\gamma(Q)=Q$.
Let $\overline{D_\gamma}$ be the closure of $D_\gamma$ in $Y$.
Then for any point
$$
P\in Y\setminus\big(\Delta\cup\nabla_\gamma\cup\nabla_{\gamma^{-1}}\cup\overline{D_\gamma}\big)
$$
the restriction $\gamma\vert_{F'}$ of the map $\gamma$
to every irreducible component $F'$ of the fiber $\alpha^{-1}(P)$ is not the identity map of $F'$, provided that $\gamma$ itself is not the identity map of~$X$.

The sets $\Delta$, $\nabla_\gamma$, and $\overline{D_\gamma}$ are proper closed analytic subsets in $Y$. If~\mbox{$\dim(Y)>0$}, fix also a subset~$\Xi$ which is a countable union of proper closed analytic subsets in $Y$.
Since the field~$\mathbb{C}$ is uncountable,  $Y$ cannot
be represented as a countable union of proper closed subsets. Hence
the complement
$$
U=Y\setminus\left(\Xi\cup\Delta\cup\bigcup_{\gamma\in\mathcal{G}\setminus\{\mathrm{id}\}}\left(\nabla_\gamma\cup \overline{D_\gamma}\right)\right)
$$
is non-empty.

Let $P$ be a point of $U$, and let $F$ be the fiber of $\alpha$ over $P$. Every element $\gamma\in \mathcal{G}$ defines a permutation of
the set of $N$ irreducible components of $F$. Thus, for every $i$ we have a homomorphism $G_i\to\mathfrak{S}_N$ to the symmetric group
of degree~$N$. Denote by $K_i\subset G_i$ the kernel of this homomorphism. Then the index of $K_i$ in $G_i$ is at most
$N!=|\mathfrak{S}_N|$. Moreover, every element of $K_i$ maps every irreducible component $F'$ of $F$ to itself, and every
non-trivial element of $K_i$ restricts to a non-trivial bimeromorphic selfmap of $F'$. Therefore, all the groups $K_i$ are embedded
into the group~$\Bim(F')$.
\end{proof}

\begin{lemma}
\label{lemma:bounded-base}
Let $X$ and $Y$ be compact complex varieties, and let
$\alpha\colon X\dasharrow Y$ be a dominant meromorphic map.
Let $F$ be a typical fiber of $\alpha$.
Suppose that for any irreducible component
$F'$ of~$F$ the group $\Bim(F')$ is Jordan. Suppose also that the image
$h_\alpha(\Bim(X;\alpha))\subset\Bim(Y)$ has bounded finite subgroups.
Then the group $\Bim(X;\alpha)$ is Jordan.
In particular, if under these assumptions the map $\alpha$ is equivariant with respect
to the whole group $\Bim(X)$, then $\Bim(X)$ is Jordan.
\end{lemma}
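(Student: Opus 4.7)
The plan is to combine the two group-theoretic lemmas (Lemmas on the bounded-subgroup quotient and on strongly Jordan quotients) with the fiberwise embedding result. First, observe that there is a short exact sequence
$$
1 \xar \Bim(X)_\alpha \xar \Bim(X;\alpha) \xarr{h_\alpha} h_\alpha\bigl(\Bim(X;\alpha)\bigr) \xar 1.
$$
Since the image $h_\alpha(\Bim(X;\alpha))$ has bounded finite subgroups by hypothesis, Lemma~\ref{lemma:group-theory} reduces the problem to showing that the kernel $\Bim(X)_\alpha$ is Jordan. The last sentence of the statement then follows because in the equivariant case $\Bim(X;\alpha)=\Bim(X)$.

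To prove that $\Bim(X)_\alpha$ is Jordan, I would argue by contradiction: if not, then for every positive integer $n$ there exists a finite subgroup $G_n\subset\Bim(X)_\alpha$ that contains no normal abelian subgroup of index at most $n$. The crucial point is now to apply Lemma~\ref{lemma:fiberwise-embedding} to the \emph{whole} countable family $\{G_n\}_{n\in\mathbb{N}}$ at once, which produces a single irreducible component $F'$ of a typical fiber $F$ of $\alpha$ and a constant $I=I(\alpha)$, such that each $G_n$ contains a subgroup $H_n$ of index at most $I$ that embeds into~$\Bim(F')$. By hypothesis $\Bim(F')$ is Jordan with some constant $J=J(\Bim(F'))$; the point of processing all $G_n$ simultaneously is that a single $F'$ (and hence a single constant $J$) works uniformly for all $n$. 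Thus each $H_n$ has a normal abelian subgroup $A_n$ of index at most~$J$.

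It remains to convert the abelian subgroup $A_n\subset H_n\subset G_n$ of bounded index into a \emph{normal} abelian subgroup of $G_n$ of bounded index. For this I would invoke the standard observation: if $A$ is an abelian subgroup of index $m$ in a finite group $G$, then the action of $G$ on the $m$ cosets of $A$ has kernel $N=\bigcap_{g\in G}gAg^{-1}$, which is a normal abelian subgroup of $G$ of index at most~$m!$. Applied with $m\le IJ$, this yields in each $G_n$ a normal abelian subgroup of index at most $(IJ)!$, contradicting the choice of $G_n$ as soon as $n>(IJ)!$. Hence $\Bim(X)_\alpha$ is Jordan, and combined with the first paragraph this completes the proof.

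The main obstacle, and the reason one needs Lemma~\ref{lemma:fiberwise-embedding} rather than a pointwise restriction, is the following: a priori the Jordan constant of $\Bim(F')$ may depend on the choice of the typical fiber $F'$, since different typical fibers need not be biholomorphic (or even bimeromorphic) in the non-algebraic setting. If one tried to find a fiber separately for each $G_n$, the resulting Jordan constants could be unbounded and the contradiction would collapse. The formulation of Lemma~\ref{lemma:fiberwise-embedding} for countable families is precisely what allows us to fix one $F'$ that serves the entire sequence $\{G_n\}$, and this uniformity is what drives the argument.
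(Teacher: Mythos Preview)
Your proof is correct and follows essentially the same route as the paper: reduce via Lemma~\ref{lemma:group-theory} to the kernel $\Bim(X)_\alpha$, assume it is not Jordan, apply Lemma~\ref{lemma:fiberwise-embedding} to a countable family of witnesses to obtain a single fiber component $F'$ and a uniform index bound, and derive a contradiction from the Jordan property of $\Bim(F')$. Your explicit normal-core argument with the bound $(IJ)!$ just spells out what the paper's proof leaves as the remark ``hence the minimal index of a normal abelian subgroup of $G_i$ is also bounded''; your final paragraph explaining why a single $F'$ must serve all $G_n$ at once is exactly the point of the formulation of Lemma~\ref{lemma:fiberwise-embedding}.
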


\begin{proof}
Suppose that the group $\Bim(X;\alpha)$ is not Jordan.
By Lemma~\ref{lemma:group-theory}, this means that the group $\Bim(X)_\alpha$ is also not Jordan.
Hence the group $\Bim(X)_\alpha$ contains a countable family of subgroups~\mbox{$G_i$, $i\in\mathbb{N}$}, such
that the minimal indices $J_i$ of normal abelian subgroups of $G_i$ form an unbounded
sequence. On the other hand, by Lemma~\ref{lemma:fiberwise-embedding} there exists a constant $I$, a
typical fiber~$F$ of the map $\alpha$, and an irreducible component $F'$ of $F$,
such that every~$G_i$ contains a
subgroup~$K_i$ of index at most $I$ which can be embedded into the group~\mbox{$\Bim(F')$}.
Since $\Bim(F')$ is a Jordan group, we conclude that the minimal index of an abelian subgroup of $K_i$ is bounded
by a constant~$J$ independent of $i$. Therefore, the minimal index of an abelian subgroup of $G_i$ is bounded
by the constant~$IJ$, and hence the minimal index of a normal abelian subgroup of $G_i$ is also bounded.
This contradicts the unboundedness of the indices~$J_i$.
\end{proof}

\begin{corollary}
\label{corollary:3-fold-over-surface}
Let $X$ be a three-dimensional compact complex variety, let~$Z$ be a compact complex surface,
and let~\mbox{$\alpha\colon X\dasharrow Z$} be a dominant meromorphic map.
Suppose that the image~\mbox{$h_\alpha(\Bim(X;\alpha))\subset\Aut(Z)$}
has bounded finite subgroups.
Then the group $\Bim(X;\alpha)$ is Jordan.
In particular, if under these assumptions the map $\alpha$ is equivariant with respect
to the whole group $\Bim(X)$, then~\mbox{$\Bim(X)$} is Jordan.
\end{corollary}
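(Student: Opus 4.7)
The plan is to deduce this as a direct application of Lemma~\ref{lemma:bounded-base}, so the only work is to verify its hypotheses in the present setting. Since $\dim(X)=3$ and $\dim(Z)=2$, a typical fiber $F$ of $\alpha$ has dimension~$1$, and by Lemma~\ref{lemma:fiberwise-embedding} we may even arrange for such an $F$ to be reduced. Hence every irreducible component $F'$ of $F$ is a (reduced, possibly singular) compact complex curve. The normalization $\nu\colon \tilde{F'}\to F'$ is then a smooth compact complex curve, and $\Bim(F')$ is naturally identified with~$\Aut(\tilde{F'})$. By Remark~\ref{remark:curve}, this group is strongly Jordan; in particular it is Jordan.

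This verifies the first hypothesis of Lemma~\ref{lemma:bounded-base}. The second hypothesis — that the image $h_\alpha(\Bim(X;\alpha))$ has bounded finite subgroups as a subgroup of $\Bim(Z)$ — is exactly the assumption of the corollary (the image is given to sit inside $\Aut(Z)\subset\Bim(Z)$, and boundedness of finite subgroups is an intrinsic property, so the ambient group does not matter). Therefore Lemma~\ref{lemma:bounded-base} applies and yields that $\Bim(X;\alpha)$ is Jordan.

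For the ``in particular'' assertion, observe that if $\alpha$ is equivariant with respect to the full group $\Bim(X)$, then by definition $\Bim(X;\alpha)=\Bim(X)$, so the conclusion of the previous paragraph gives that $\Bim(X)$ itself is Jordan. There is essentially no obstacle here: the content is entirely in Lemma~\ref{lemma:bounded-base}, and the only thing one has to be slightly careful with is that the components of a typical fiber may be singular curves, which is harmless since $\Bim$ is a bimeromorphic invariant and the normalization of a one-dimensional reduced complex space is smooth.
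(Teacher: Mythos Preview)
Your proof is correct and follows the same route as the paper: verify that the irreducible components of a typical fiber are curves with Jordan $\Bim$, then invoke Lemma~\ref{lemma:bounded-base}. The only cosmetic difference is that the paper simply asserts that $F'$ is a smooth projective curve (implicitly replacing $X$ by a smooth model, which is harmless since $\Bim(X;\alpha)$ is a bimeromorphic invariant), whereas you allow $F'$ to be singular and pass to its normalization before applying Remark~\ref{remark:curve}; either way the conclusion is immediate.
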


\begin{proof}
Let $F$ be a typical fiber
of $\alpha$, and let $F'$ be an irreducible component of $F$.
Then $F'$ is a smooth projective curve.
Hence $\Bim(F')=\Aut(F')$ is Jordan, see Remark~\ref{remark:curve}.
Therefore, the required assertion follows from Lemma~\ref{lemma:bounded-base}.
\end{proof}

\begin{corollary}
\label{corollary:3-fold-over-curve}
Let $X$ be a three-dimensional compact complex variety with $\kod(X)\ge 0$, let~$B$ be a curve,
and let~\mbox{$\alpha\colon X\dasharrow B$} be a dominant meromorphic map.
Suppose that the image~\mbox{$h_\alpha(\Bim(X;\alpha))\subset\Aut(B)$}
has bounded finite subgroups.
Then the group $\Bim(X;\alpha)$ is Jordan.
In particular, if under these assumptions the map $\alpha$ is equivariant with respect
to the whole group $\Bim(X)$, then~\mbox{$\Bim(X)$} is Jordan.
\end{corollary}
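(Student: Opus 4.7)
The plan is to follow the argument of Corollary~\ref{corollary:3-fold-over-surface} almost verbatim, with surface fibers replacing curve fibers. The key tool is again Lemma~\ref{lemma:bounded-base}: to deduce that $\Bim(X;\alpha)$ is Jordan, it suffices to verify that for an irreducible component $F'$ of a typical fiber $F$ of $\alpha$, the group $\Bim(F')$ is Jordan (the hypothesis on the image $h_\alpha(\Bim(X;\alpha))$ is already given).

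First I would observe that since $\dim(X)=3$ and $\dim(B)=1$, a typical fiber $F$ of $\alpha$ has pure dimension $2$, so every irreducible component $F'$ is a two-dimensional compact complex variety, i.e.\ bimeromorphic to a compact complex surface in the sense of the paper.

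The essential new ingredient, compared with Corollary~\ref{corollary:3-fold-over-surface}, is the assumption $\kod(X)\ge 0$: it supplies the Kodaira-dimension hypothesis needed to invoke Theorem~\ref{theorem:surface}. Concretely, Lemma~\ref{lemma:fiber-kappa} yields $\kod(F')\ge 0$, and then Theorem~\ref{theorem:surface}, applied to a smooth bimeromorphic model of $F'$ (whose group of bimeromorphic selfmaps is canonically identified with $\Bim(F')$), gives that $\Bim(F')$ is strongly Jordan, hence Jordan. Lemma~\ref{lemma:bounded-base} then delivers the Jordan property of $\Bim(X;\alpha)$, and the final ``In particular'' assertion follows because $\Bim(X)$-equivariance of $\alpha$ means exactly $\Bim(X;\alpha)=\Bim(X)$.

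I do not expect any serious obstacle: the corollary is a formal combination of Lemma~\ref{lemma:fiber-kappa}, Theorem~\ref{theorem:surface}, and Lemma~\ref{lemma:bounded-base}. The only minor point to check is that ``typical'' is strong enough for the argument to go through, i.e.\ that the irreducible components $F'$ of typical fibers have the expected dimension~$2$ and admit the passage to a smooth model; both of these are built into the stratification used in the proof of Lemma~\ref{lemma:fiberwise-embedding} (in particular, in the definition of the subset $\Delta\subset Y$ there), so no extra work is required.
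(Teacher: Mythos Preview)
Your proposal is correct and follows essentially the same approach as the paper: apply Lemma~\ref{lemma:fiber-kappa} to get $\kod(F')\ge 0$, invoke Theorem~\ref{theorem:surface} to conclude that $\Bim(F')$ is Jordan, and finish with Lemma~\ref{lemma:bounded-base}. The extra remarks you make about smooth models and the set $\Delta$ are harmless elaborations, not departures from the paper's argument.
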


\begin{proof}
Let $F$ be a typical fiber
of $\alpha$, and let $F'$ be an irreducible component of $F$.
Then $\kod(F')\ge 0$ by Lemma~\ref{lemma:fiber-kappa}.
Hence $\Bim(F')$ is Jordan by Theorem~\ref{theorem:surface}.
Therefore, the required assertion follows from Lemma~\ref{lemma:bounded-base}.
\end{proof}

\begin{corollary}\label{corollary:3-fold-over-elliptic-curve-fixed-point}
Let $X$ be a three-dimensional compact complex variety with $\kod(X)\ge 0$, let $A$ be a smooth projective curve,
and let~\mbox{$\alpha\colon X\dasharrow A$} be a dominant meromorphic map.
Suppose that either the genus of $A$ is at least $2$, or $A$ is elliptic and
the image~\mbox{$h_\alpha(\Bim(X;\alpha))\subset\Aut(A)$} preserves a non-empty finite subset of $A$.
Then the group $\Bim(X;\alpha)$ is Jordan.
In particular, if under these assumptions the map $\alpha$ is equivariant with respect
to the whole group $\Bim(X)$, then~\mbox{$\Bim(X)$} is Jordan.
\end{corollary}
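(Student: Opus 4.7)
The plan is to reduce everything to Corollary~\ref{corollary:3-fold-over-curve}, which already delivers Jordan property for $\Bim(X;\alpha)$ whenever the image $h_\alpha(\Bim(X;\alpha))\subset\Aut(A)$ has bounded finite subgroups. So the only thing I need to verify is this bounded-finite-subgroups property for $\Aut(A)$-subgroups in each of the two stated cases.

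When $g(A)\ge 2$, the group $\Aut(A)$ is itself finite (by the classical Hurwitz bound, or as a special case of Theorem~\ref{theorem:general-type} since $\kod(A)=\dim(A)=1$). Hence every subgroup of $\Aut(A)$ is finite of bounded order, and the hypothesis of Corollary~\ref{corollary:3-fold-over-curve} is immediate.

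When $A$ is elliptic and $H:=h_\alpha(\Bim(X;\alpha))$ preserves a non-empty finite subset $S\subset A$, I would in fact show that $H$ itself is finite. The permutation action on $S$ yields a homomorphism $H\to\mathfrak{S}_{|S|}$ whose kernel $K$ fixes $S$ pointwise; hence $K$ is contained in the stabilizer in $\Aut(A)$ of any chosen point $s_0\in S$. After translating $s_0$ to the origin, this stabilizer becomes the group of automorphisms of the elliptic curve $A$ fixing its origin, which is cyclic of order $2$, $4$, or $6$. Therefore $|K|\le 6$, and so $|H|\le 6\cdot |S|!$; in particular $H$ has bounded finite subgroups.

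With the hypothesis of Corollary~\ref{corollary:3-fold-over-curve} verified in both cases, that corollary immediately yields that $\Bim(X;\alpha)$ is Jordan, and the ``in particular'' clause about $\Bim(X)$ follows at once when $\alpha$ is $\Bim(X)$-equivariant. The only step requiring any genuine input is the elliptic case, and even there the obstacle is minor: it reduces to the elementary fact that point stabilizers in $\Aut(A)$ for an elliptic curve $A$ are cyclic of order at most~$6$.
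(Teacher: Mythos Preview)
Your proof is correct and follows exactly the paper's approach: the paper's proof consists of the single observation that $h_\alpha(\Bim(X;\alpha))$ is finite under either hypothesis, after which Corollary~\ref{corollary:3-fold-over-curve} applies. You have simply spelled out the elementary reasons for this finiteness (Hurwitz in the genus $\ge 2$ case, the bound on point stabilizers in the elliptic case) that the paper leaves to the reader.
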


\begin{proof}
We see from the assumptions that the group $h_\alpha(\Bim(X;\alpha))$ is finite.
Therefore, the assertion follows from Corollary~\ref{corollary:3-fold-over-curve}.
\end{proof}

\begin{corollary}\label{corollary:over-torus}
Let $X$ be a three-dimensional compact K\"ahler variety with $\kod(X)\ge 0$, and
let~\mbox{$\alpha\colon X  \dashrightarrow T$}  be a
dominant  meromorphic map
to a two-dimensional complex torus $T$.
Suppose that there exists a subvariety $V\subsetneq T$
invariant with respect to~\mbox{$h_\alpha(\Bim(X;\alpha))$}.
Then the group $\Bim(X;\alpha)$ is Jordan.
In particular, if under these assumptions the map $\alpha$ is equivariant with respect
to the whole group $\Bim(X)$, then~\mbox{$\Bim(X)$} is Jordan.
\end{corollary}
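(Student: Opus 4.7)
The plan is to split the argument according to the identity component $E\subset T$ of the subgroup $T_V=\{t\in T\mid V+t=V\}$ of translations of $T$ preserving $V$; since $E$ acts on $V$ by translations and $V\subsetneq T$, we have $\dim E\le\dim V<\dim T=2$, so $\dim E\in\{0,1\}$.

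If $\dim E=0$, then $T_V$ is finite. By Theorem~\ref{theorem:torus-Aut}, $\Aut(T)\cong T\rtimes\Gamma$ with $\Gamma\subset\mathrm{GL}_{4}(\ZZ)$, so the stabilizer $\Aut(T;V)$ of $V$ in $\Aut(T)$ is an extension of a subgroup of $\Gamma$ by $T_V$. By Minkowski's theorem (Theorem~\ref{theorem:Minkowski}), $\Aut(T;V)$ has bounded finite subgroups, and hence so does $G:=h_\alpha(\Bim(X;\alpha))\subset\Aut(T;V)$. The conclusion then follows at once from Corollary~\ref{corollary:3-fold-over-surface}.

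If $\dim E=1$, then $\dim V=1$, and since $E$ is connected while the set of irreducible components of $V$ is discrete, each component of $V$ is $E$-invariant and hence is a single translate of $E$; so $V=\bigsqcup_{i=1}^{k}(p_i+E)$ for some $p_1,\dots,p_k\in T$. Set $A:=T/E$ and let $\pi\colon T\to A$ be the quotient; the image $S:=\pi(V)=\{[p_1],\dots,[p_k]\}$ is a nonempty finite subset of the elliptic curve $A$. By maximality of $E$, the linear part $\gamma_g$ of every $g\in G$ satisfies $\gamma_g(E)=E$---indeed, $\gamma_g(E)$ is a connected subtorus whose translations preserve $g(V)=V$, forcing $\gamma_g(E)\subseteq E$. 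Thus $G$ descends to a subgroup of $\Aut(A)$ preserving $S$. Setting $\beta:=\pi\circ\alpha\colon X\dashrightarrow A$, one checks directly that $\Bim(X;\alpha)\subset\Bim(X;\beta)$ and that $h_\beta(\Bim(X;\alpha))$ is contained in the stabilizer of $S$ in $\Aut(A)\cong A\rtimes F$ (with $F$ finite), which is itself a finite group.

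Applying Lemma~\ref{lemma:group-theory} to the exact sequence
\[
1\to N\to\Bim(X;\alpha)\to h_\beta(\Bim(X;\alpha))\to 1
\]
with $N:=\ker\bigl(h_\beta\vert_{\Bim(X;\alpha)}\bigr)\subset\Bim(X)_\beta$, it suffices to show that $N$ is Jordan. This is done by reproducing the proof of Lemma~\ref{lemma:bounded-base}: were $N$ not Jordan, Lemma~\ref{lemma:fiberwise-embedding} applied to $\beta$ would embed, up to bounded index, an offending countable family of finite subgroups of $N$ into $\Bim(F')$ for some typical irreducible component $F'$ of a fiber of $\beta$; but such an $F'$ is a surface with $\kod(F')\ge 0$ by Lemma~\ref{lemma:fiber-kappa}, and $\Bim(F')$ is strongly Jordan by Theorem~\ref{theorem:surface}, a contradiction. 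The main obstacle is precisely this last step in the case $\dim E=1$: one cannot apply Corollary~\ref{corollary:3-fold-over-elliptic-curve-fixed-point} directly to $\beta$, because its hypothesis concerns $h_\beta(\Bim(X;\beta))$ rather than $h_\beta(\Bim(X;\alpha))$, and passing from $\alpha$ to $\beta$ may genuinely enlarge the group; one is therefore forced to repeat the strategy of Lemma~\ref{lemma:bounded-base} at the level of the subgroup $\Bim(X;\alpha)\subset\Bim(X;\beta)$.
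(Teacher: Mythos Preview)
Your proof is correct, and it takes a genuinely different (though closely related) route from the paper's.

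The paper argues by picking an irreducible component $V_1$ of $V$, passing to its finite-index stabilizer $\Gamma_1\subset h_\alpha(\Bim(X;\alpha))$, and doing a case analysis on the geometry of $V_1$: if $V_1$ is a point, a singular curve, or a smooth curve of genus at least $2$, one produces a fixed point for a finite-index subgroup of $\Gamma_1$ and invokes Theorem~\ref{theorem:fixed-point-BFS} to get bounded finite subgroups; only when $V_1$ is an elliptic curve does the paper quotient by the corresponding subtorus and descend to an elliptic curve with an invariant point, finishing via Corollary~\ref{corollary:3-fold-over-elliptic-curve-fixed-point}.

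Your argument replaces this geometric case analysis by the single invariant $\dim E$, where $E$ is the identity component of the translation stabilizer of $V$. The payoff is that your $\dim E=0$ case handles in one stroke (via Theorem~\ref{theorem:torus-Aut} and Minkowski) everything the paper treats through fixed points, bypassing Theorem~\ref{theorem:fixed-point-BFS} entirely. Your $\dim E=1$ case then coincides in spirit with the paper's elliptic-curve case: both quotient by a one-dimensional subtorus and land on an elliptic curve with an invariant finite set. You are also more scrupulous than the paper on one point: you correctly observe that Corollary~\ref{corollary:3-fold-over-elliptic-curve-fixed-point} is stated for the full group $\Bim(X;\beta)$ rather than for the subgroup $\Bim(X;\alpha)$, and you explicitly re-run the argument of Lemma~\ref{lemma:bounded-base} via Lemma~\ref{lemma:fiberwise-embedding}. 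The paper applies Corollary~\ref{corollary:3-fold-over-elliptic-curve-fixed-point} to the subgroup $\tilde{\Gamma}_1$ without comment; this is harmless because the proofs in Section~\ref{section:equivariant} go through verbatim for any subgroup of $\Bim(X;\alpha)$ whose image satisfies the relevant hypothesis, but your version makes the logic transparent.
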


\begin{proof}
Recall that the action of~\mbox{$h_\alpha(\Bim(X;\alpha))$} on $T$ is regular by Proposition~\ref{proposition:torus-holomorphic}.
If the group~\mbox{$h_\alpha(\Bim(X;\alpha))$} has bounded finite subgroups, then
$\Bim(X;\alpha)$ is Jordan by Corollary~\ref{corollary:3-fold-over-surface}.

Let $V_1$ be an irreducible component of $V$, and let $\Gamma_1\subset h_\alpha(\Bim(X;\alpha))$
be the stabilizer of~$V_1$. Then $\Gamma_1$ is a finite index subgroup in $h_\alpha(\Bim(X;\alpha))$.
If $\Gamma_1$ acts on $V_1$ with a fixed point, then it has bounded finite subgroups by Theorem~\ref{theorem:fixed-point-BFS}
(or by Theorems~\ref{theorem:torus-Aut}
and~\ref{theorem:Minkowski}). Hence $h_\alpha(\Bim(X;\alpha))$ has bounded finite subgroups  as well.
In particular, this applies to the case when $V_1$ is a point itself.

Suppose that $V_1$ is a curve; then $V_1$ is not rational.
If $V_1$ is singular, then a subgroup of finite index in $\Gamma_1$ acts on
$T$ with a fixed point, and so $\Gamma_1$ and $h_\alpha(\Bim(X;\alpha))$ have bounded finite subgroups.
Thus, we assume that $V_1$ is smooth.
If $\g(V_1)>1$, then the group $\Aut(V_1)$
is finite. This means that a subgroup of finite index in $\Gamma_1$ acts on $V_1$ trivially, and in particular
has a fixed point. As before, we conclude that $\Gamma_1$ and $h_\alpha(\Bim(X;\alpha))$ have bounded finite subgroups in this case.

Therefore, we may assume that $V_1$ is an elliptic curve. Consider the quotient
torus~\mbox{$T_1=T/V_1$}, where the group structure on $T$ is chosen in such a way
that $V_1$ is a subgroup of $T$ (i.e., the neutral element of $T$ is contained in $V_1$).
Note that $T_1$ and the quotient map $T\to T_1$ do not depend on this choice, and hence
the map $T\to T_1$ is $\Gamma_1$-equivariant.
Thus the composition $X \to T\to T_1$ is
equivariant with respect to
the preimage $\tilde{\Gamma}_1$ of $\Gamma_1$ in $\Bim(X;\alpha)$. Moreover,
the image of $\tilde{\Gamma}_1$ (or of~$\Gamma_1$) in $\Aut(T_1)$ preserves a point on the elliptic curve $T_1$.
Therefore, the group $\tilde{\Gamma}_1$ is Jordan by Corollary~\ref{corollary:3-fold-over-elliptic-curve-fixed-point}.
Since $\tilde{\Gamma}_1$ has finite index in $\Bim(X;\alpha)$, the latter group is Jordan as well.
\end{proof}

\begin{lemma}
\label{lemma:bounded-fiber}
Let $X$ and $Y$ be compact complex varieties, and let
$\alpha\colon X\dasharrow Y$ be a dominant meromorphic map.
Let $F$ be a typical fiber of $\alpha$.
Suppose that for any irreducible component
$F'$ of~$F$ the group $\Bim(F')$ has bounded finite subgroups.
Suppose also that the image~\mbox{$h_\alpha(\Bim(X;\alpha))\subset\Bim(Y)$} is strongly Jordan.
Then the group $\Bim(X;\alpha)$ is Jordan.
In particular, if under these assumptions the map $\alpha$ is equivariant with respect
to the whole group $\Bim(X)$, then $\Bim(X)$ is Jordan.
\end{lemma}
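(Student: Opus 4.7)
The plan is to apply Lemma~\ref{lemma:group-theory-2} to the exact sequence
$$
1\xar \Bim(X)_\alpha \xar \Bim(X;\alpha)\xar h_\alpha\bigl(\Bim(X;\alpha)\bigr).
$$
By assumption the image $h_\alpha(\Bim(X;\alpha))$ is strongly Jordan, so it will be enough to prove that the kernel $\Bim(X)_\alpha$ has bounded finite subgroups. Once this is shown, Lemma~\ref{lemma:group-theory-2} immediately yields that $\Bim(X;\alpha)$ is Jordan, and the ``in particular'' clause follows because under the stated equivariance one has $\Bim(X;\alpha)=\Bim(X)$.

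To establish that $\Bim(X)_\alpha$ has bounded finite subgroups, I would argue by contradiction. If the orders of finite subgroups of $\Bim(X)_\alpha$ are unbounded, pick a countable family $G_i\subset\Bim(X)_\alpha$ with $|G_i|\to\infty$. Apply Lemma~\ref{lemma:fiberwise-embedding} to this family: this produces a constant $I=I(\alpha)$, a typical fiber $F$ of $\alpha$, and an irreducible component $F'$ of $F$ of dimension $\dim(X)-\dim(Y)$ such that each $G_i$ contains a subgroup $K_i$ with $[G_i:K_i]\le I$ that embeds into $\Bim(F')$.

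Now I would invoke the standing hypothesis: $\Bim(F')$ has bounded finite subgroups, say by a constant $B=B(\Bim(F'))$. Hence $|K_i|\le B$ for every $i$, and consequently $|G_i|\le IB$, contradicting the unboundedness of $|G_i|$. Therefore $\Bim(X)_\alpha$ has bounded finite subgroups, completing the proof.

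There is no serious obstacle here; the argument is a direct combination of the fiberwise embedding lemma (Lemma~\ref{lemma:fiberwise-embedding}) with the group-theoretic Lemma~\ref{lemma:group-theory-2}. The only subtlety, compared to the parallel statement Lemma~\ref{lemma:bounded-base}, is that we cannot afford to lose information by passing to normal abelian subgroups of the $G_i$: we need to control the orders of \emph{all} finite subgroups of the kernel so as to feed them into the strongly-Jordan hypothesis on the base via Lemma~\ref{lemma:group-theory-2}. This is precisely why we replace ``Jordan'' on the fibers by ``bounded finite subgroups'', and ``bounded finite subgroups'' on the base by ``strongly Jordan''.
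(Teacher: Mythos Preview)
Your proposal is correct and follows essentially the same approach as the paper: both arguments reduce via Lemma~\ref{lemma:group-theory-2} to showing that $\Bim(X)_\alpha$ has bounded finite subgroups, and both establish this by applying Lemma~\ref{lemma:fiberwise-embedding} to a countable family of finite subgroups with unbounded orders to reach a contradiction with the bounded-finite-subgroups hypothesis on the fibers. The only cosmetic difference is the direction of the contrapositive: the paper begins by supposing $\Bim(X;\alpha)$ is not Jordan and invokes Lemma~\ref{lemma:group-theory-2} to deduce unboundedness in the kernel, whereas you first bound the kernel and then apply Lemma~\ref{lemma:group-theory-2}.
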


\begin{proof}
As in the proof of Lemma~\ref{lemma:bounded-base}, suppose that the group $\Bim(X;\alpha)$ is not Jordan.
By Lemma~\ref{lemma:group-theory-2}, this means that the group $\Bim(X)_\alpha$ has unbounded finite subgroups.
In other words, the group~\mbox{$\Bim(X)_\alpha$} contains a countable family of subgroups
$G_i$, $i\in\mathbb{N}$, such
that the orders of~$G_i$ are unbounded.
On the other hand, by Lemma~\ref{lemma:fiberwise-embedding} there exists a constant $I$, a
typical fiber $F$ of the map $\alpha$, and an irreducible component $F'$ of $F$,
such that every $G_i$ contains a
subgroup $K_i$ of index at most $I$ which can be embedded into the group~\mbox{$\Bim(F')$}.
Since $\Bim(F')$ has bounded finite subgroups, we see that the orders of $K_i$ are bounded.
This contradicts the unboundedness of the orders of the groups~$G_i$.
\end{proof}

Lemma~\ref{lemma:bounded-fiber}
immediately implies

\begin{corollary}\label{corollary:bounded-fiber-finite}
Let $X$ and $Y$ be compact complex varieties, and let
$\alpha\colon X\dasharrow Y$ be a dominant meromorphic map
such that a typical fiber of $\alpha$ is finite.
Suppose that the group $\Bim(Y)$ is strongly Jordan.
Then the group $\Bim(X;\alpha)$ is Jordan.
In particular, if under these assumptions the map $\alpha$ is equivariant with respect
to the whole group $\Bim(X)$, then $\Bim(X)$ is Jordan.
\end{corollary}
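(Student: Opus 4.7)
The plan is to reduce this corollary directly to Lemma~\ref{lemma:bounded-fiber}, since the hypotheses of the corollary are a very strong special case of those of the lemma. The two things to check are that the fiberwise data is trivial in a strong sense, and that the base-level hypothesis on the image of $h_\alpha$ is inherited from the stronger assumption on all of $\Bim(Y)$.

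First, I would observe that since a typical fiber $F$ of $\alpha$ is finite, each of its irreducible components $F'$ is a single (reduced) point. Consequently $\Bim(F')$ is the trivial group, which obviously has bounded finite subgroups (any bound $B\ge 1$ works). This verifies the fiberwise hypothesis of Lemma~\ref{lemma:bounded-fiber}.

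Next, I would note that the image $h_\alpha(\Bim(X;\alpha))$ is a subgroup of $\Bim(Y)$, and the property of being strongly Jordan is inherited by subgroups: the Jordan constant $J(\Bim(Y))$ and the generator bound $r(\Bim(Y))$ automatically serve as constants for any subgroup, because every finite subgroup of the subgroup is also a finite subgroup of~$\Bim(Y)$. Hence $h_\alpha(\Bim(X;\alpha))$ is strongly Jordan, and the second hypothesis of Lemma~\ref{lemma:bounded-fiber} is satisfied.

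Applying Lemma~\ref{lemma:bounded-fiber} yields that $\Bim(X;\alpha)$ is Jordan, and the ``in particular'' clause then follows since in that case $\Bim(X)=\Bim(X;\alpha)$. There is no real obstacle here — the corollary is essentially a restatement of Lemma~\ref{lemma:bounded-fiber} in the degenerate situation where the generic fiber contributes nothing; the only observation worth recording is the triviality of $\Bim(F')$ for $F'$ a point and the passage from ``$\Bim(Y)$ strongly Jordan'' to ``the image is strongly Jordan''.
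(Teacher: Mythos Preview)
Your proposal is correct and matches the paper's approach exactly: the paper simply states that Lemma~\ref{lemma:bounded-fiber} ``immediately implies'' this corollary, and the details you supply (triviality of $\Bim(F')$ for a point, and the hereditary nature of the strongly Jordan property) are precisely the ones making this implication immediate.
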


\section{Pseudoautomorphisms}
\label{sect:pseudo}

In this section we consider groups of pseudoautomorphisms of compact K\"ahler manifolds, and establish the Jordan property
for groups of bimeromorphic selfmaps of three-dimensional compact K\"ahler varieties of Kodaira dimension zero.

Recall that a biholomorphic selfmap $f\colon X\dashrightarrow X$
of a compact complex variety $X$ is called a \emph{pseudoautomorphism}
if there are non-empty Zariski open subsets
$U_1,U_2\subset X$ such that~\mbox{$\mathrm{codim}_X(X\setminus U_i)\ge 2$},
and $f$ restricts to an isomorphism
$$
f\vert_{U_1}\colon U_1\xarr{\sim} U_2.
$$
The pseudoautomorphisms of $X$ form a subgroup in $\Bim(X)$
which we denote by $\operatorname{PAut}(X)$.

\begin{lemma}[{Negativity Lemma \cite[1.1]{Shokurov-1992-e-ba}, \cite[Lemma~1.3]{Wang2019}}]
\label{lemma:Negativity}
Let $f\colon \tilde V\to V$
be a proper bimeromorphic morphism between normal complex varieties.
Let $D$ be a Cartier divisor on $\tilde V$ such that $-D$ is $f$-nef. Then $D$
is effective if and only if $f_*D$ is effective.
\end{lemma}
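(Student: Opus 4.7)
My plan is to handle the trivial direction by observing that pushforward of an effective cycle is effective, and to prove the substantive converse by a local reduction to a resolution of a normal surface singularity, where Mumford--Grauert negative-definiteness of the exceptional intersection form forces the result. Thus assume $-D$ is $f$-nef and $f_*D\ge 0$, and write $D=D_+-D_-$ with $D_+,\,D_-$ effective Cartier divisors without common components; the goal is $D_-=0$. First I would verify that every prime component $E$ of $D_-$ is $f$-exceptional. Indeed, since $f$ is bimeromorphic, for each prime divisor $F$ of $V$ there is a unique prime divisor of $\tilde V$ dominating it, namely its proper transform; if $f(E)$ were a divisor, then $E$ would be the only component of $D$ mapping onto $f(E)$, and the multiplicity of $f(E)$ in $f_*D$ would equal $-\deg(f|_E)\cdot\mathrm{mult}_E(D_-)<0$, a contradiction.

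Assuming $D_-\ne 0$, I would then cut down to dimension two. Pick a component $E\subset D_-$ such that $\dim f(E)$ is minimal among components of $D_-$, choose a general point $p\in f(E)$, and work in a small analytic neighborhood of $p$. Slicing $V$ by $\dim V-2$ general hyperplane sections through $p$ and restricting $f$ yields a proper bimeromorphic map $f_S\colon \tilde S\to S$ from a smooth analytic surface onto a normal analytic surface with an isolated singularity at $p$, whose reduced exceptional set $\Sigma=\bigcup \Gamma_i$ has negative definite intersection matrix. By genericity of the cuts, $D|_{\tilde S}$ stays Cartier, $-D|_{\tilde S}$ stays $f_S$-nef, and the restriction of $D_-$ survives as a nonzero effective divisor supported on $\Sigma$.

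Finally, decompose $D|_{\tilde S}=A-B+C$, where $A$ and $B$ are the effective parts of $D|_{\tilde S}$ supported on $\Sigma$, arising from $D_+$ and $D_-$ respectively (hence with disjoint supports by construction), and $C$ is effective with support disjoint from $\Sigma$. For every $\Gamma_i$ one has $0\le -D|_{\tilde S}\cdot\Gamma_i=(B-A-C)\cdot\Gamma_i$, and $C\cdot\Gamma_i\ge 0$ since $C$ has no component in $\Sigma$, whence $(B-A)\cdot\Gamma_i\ge 0$ for every $i$. Writing $Z=B-A$ as the difference of its effective positive and negative parts and computing $Z_+\cdot Z_+$, one obtains a nonnegative quantity, which by negative definiteness forces $Z_+=0$, hence $Z\le 0$, that is $B\le A$. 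Since $A$ and $B$ have disjoint supports, this gives $B=0$, contradicting the survival of $D_-$ on $\tilde S$. The main delicate step will be the analytic hyperplane reduction: one must ensure that generic cuts leave $\tilde S$ smooth along $f_S^{-1}(p)$, that the resulting singularity on $S$ is normal with an exceptional set admitting a negative-definite pairing, and that the nonzero components of $D_-$ indeed survive on $\tilde S$, all of which requires a careful Bertini-type argument in the analytic category.
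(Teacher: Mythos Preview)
The paper does not prove this lemma; it merely states it with citations to \cite[1.1]{Shokurov-1992-e-ba} and \cite[Lemma~1.3]{Wang2019}. Your strategy---reduce to a surface by cutting and then invoke Grauert--Mumford negative definiteness of the exceptional intersection form---is exactly the standard one used in those references, and your endgame computation with $Z=B-A$ (forcing $Z_+^2\ge 0$, hence $Z_+=0$, hence $B=0$) is correct.

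The slicing step, however, needs correction. You cut $V$ by $\dim V-2$ hyperplanes through $p$ and assert that this produces a bimeromorphic map of surfaces $f_S\colon\tilde S\to S$ with $\tilde S$ smooth. But every pullback $f^*H_i$ contains the entire fiber $f^{-1}(p)$, so the scheme-theoretic intersection $\bigcap_i f^*H_i$ contains $f^{-1}(p)$ and will typically fail to be an irreducible (or even pure two-dimensional) surface over $p$; this is not merely an analytic Bertini issue but a genuine failure of the construction as written. The standard remedy is to first pass to a resolution of $\tilde V$ (which preserves $f$-nefness of $-D$ and leaves $f_*D$ unchanged) and then cut on $\tilde V$ rather than on $V$: locally over $V$ the resolution can be taken projective, so relatively ample divisors exist and general members cut out a smooth surface $\tilde S\subset\tilde V$ on which your negative-definiteness argument runs verbatim. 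Separately, even with the correct cutting, your claim that $\tilde S$ is smooth requires that resolution step; without it $\tilde S$ is only normal, which still suffices via Mumford's $\QQ$-valued intersection pairing, but is not what you wrote.
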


The following assertion is well known to experts (see e.g.~\mbox{\cite[Lemma~4.3]{Kollar:flops}}).
We provide its proof for the convenience of the reader.

\begin{lemma}\label{lemma:pseudoaut}
Let $\chi\colon X \dashrightarrow X'$ be a bimeromorphic map of compact complex varieties with
 terminal singularities such that  $\upomega_{X'}$ is
nef. Then $\chi^{-1}$ does not contract any divisors.
 \end{lemma}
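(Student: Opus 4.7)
The plan is to argue by contradiction using a common resolution of $\chi$ together with the Negativity Lemma~\ref{lemma:Negativity}. Suppose $\chi^{-1}$ contracts a prime divisor $D'\subset X'$. I would choose a smooth compact complex manifold $W$ equipped with bimeromorphic morphisms $p\colon W\to X$ and $q\colon W\to X'$ satisfying $q=\chi\circ p$ (for instance, by desingularizing the closure of the graph of $\chi$ in $X\times X'$). Let $\tilde D\subset W$ be the strict transform of $D'$: it maps birationally onto $D'$ under $q$, hence is \emph{not} $q$-exceptional, while $p(\tilde D)=\chi^{-1}(D')$ has codimension at least two in $X$, so $\tilde D$ \emph{is} $p$-exceptional.

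Since both $X$ and $X'$ have only terminal singularities, writing discrepancies as $\QQ$-divisors we have
\[
K_W=p^*K_X+E_p,\qquad K_W=q^*K_{X'}+E_q,
\]
where $E_p$ (respectively $E_q$) is an effective $\QQ$-divisor whose support is precisely the $p$-exceptional (respectively $q$-exceptional) locus and every coefficient of which is strictly positive. Subtracting yields $E_q-E_p=p^*K_X-q^*K_{X'}$. Since $K_{X'}$ is nef on $X'$, the pullback $q^*K_{X'}$ is nef on $W$; combined with the fact that $p^*K_X$ has zero intersection with every curve contracted by $p$, this shows that $-(E_q-E_p)=q^*K_{X'}-p^*K_X$ is $p$-nef.

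Setting $D:=E_q-E_p$ and (if necessary) clearing denominators by a positive integer, Lemma~\ref{lemma:Negativity} applied to the morphism $p$ gives that $D$ is effective if and only if $p_*D$ is effective. But $p_*D=p_*E_q$, because $E_p$ is $p$-exceptional, and $p_*E_q$ is effective since $E_q$ itself is; hence $D$ must be effective on $W$. On the other hand, the coefficient of $\tilde D$ in $D$ equals $-a(\tilde D,X)$: the contribution from $E_q$ vanishes because $\tilde D$ is not $q$-exceptional, while the contribution from $E_p$ is the positive terminal discrepancy $a(\tilde D,X)>0$. This contradicts the effectivity of $D$, so $\chi^{-1}$ cannot contract any divisor.

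The only delicate point I anticipate is the bookkeeping required to pass from the $\QQ$-Cartier classes $K_X$ and $K_{X'}$ to the integral, Cartier formulation of the Negativity Lemma as stated, and to verify carefully that the $p$-nefness check is valid with $\QQ$-coefficients; aside from this, the proof is a direct application of standard MMP techniques.
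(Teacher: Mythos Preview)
Your proposal is correct and follows essentially the same approach as the paper: take a common resolution, form the difference of the discrepancy divisors, observe that its negative is $p$-nef because $K_{X'}$ is nef, and apply the Negativity Lemma to force the coefficients coming from divisors contracted by $\chi^{-1}$ to vanish. The paper phrases this directly (separating exceptional divisors into three types $E_i$, $F_j$, $G_k$ and concluding $\sum F_j=0$) rather than by contradiction, and it clears denominators from the outset by working with $\upomega^{[m]}$, but the mathematical content is identical; your anticipated ``delicate point'' about passing to integral Cartier divisors is handled exactly as you suggest, by multiplying through by a sufficiently divisible $m$.
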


\begin{proof}
By definition of terminal singularities, for a sufficiently divisible positive integer $m$ the sheaves
$\upomega_X^{[m]}$ and $\upomega_{X'}^{[m]}$ are invertible.
Let
\[
\xymatrix@R=7pt{
&Y\ar[dl]_p\ar[dr]^q&
\\
X\ar@{-->}[rr]^{\chi}&&X'
}
\]
be   a common resolution, i.e. a commutative diagram where  $Y$
is a compact complex manifold, and~$p$ and $q$ are proper bimeromorphic
morphisms.
By making further blowups if necessary, we may assume that
the exceptional sets $\Exc(p)$ and $\Exc(q)$ are of pure codimension one.
Set~\mbox{$\Upsilon_p=p(\Exc(p))$} and~\mbox{$\Upsilon_q=q(\Exc(q))$}.
Then $p$ induces an isomorphism of open subsets~\mbox{$Y\setminus\Exc(p)$} and~\mbox{$X\setminus \Upsilon_p$}; similarly, $q$ induces an isomorphism of
$Y\setminus\Exc(q)$ and $X'\setminus \Upsilon_q$.
This implies that one can write
\begin{equation*}
\upomega_Y^{\otimes m}\cong
p^*\big(\upomega_X^{[m]}\big)\otimes
\OOO_Y\Big(m\Big(\sum a_i E_i +\sum b_j F_j\Big)\Big)
\cong q^*\big(\upomega_{X'}^{[m]}\big)
\otimes\OOO_Y\Big(m\Big(\sum c_iE_i +\sum d_k G_k\Big)\Big),\\
\end{equation*}
where $E_i$ (respectively, $F_j$, respectively, $G_k$) are exceptional
divisors with respect to
both $p$ and $q$ (respectively, $p$-exceptional
but not $q$-exceptional, respectively,  $q$-exceptional but not $p$-exceptional).
Thus, none of the three divisors  $\sum E_i$, $\sum  F_j$, and $\sum G_k$ has a common
irreducible component with any of the other two.
Since $X$ and $X'$ have only terminal singularities, the rational numbers
$a_i, \, b_j,\, c_i$, and $d_k$ are strictly positive.

Consider the Cartier divisor
\[
D=  m\Big(\sum (c_i-a_i) E_i+\sum d_k G_k-\sum b_j F_j\Big)
\]
Clearly, the push-forward $p_*D$ is effective. Since
\[
\OOO_Y(-D)\cong  q^* \upomega_{X'}^{[m]}\otimes p^*\upomega_X^{[-m]},
\]
we see that the divisor $-D$ is $p$-nef.
Thus
$D$ is effective  by Lemma~\ref{lemma:Negativity}.
Hence, one has $\sum  F_j=0$, i.e. the
map~$\chi^{-1}$ does not contract any divisors.
\end{proof}

\begin{corollary}\label{corollary:pseudoaut}
Let $X$ be a compact complex variety with
 terminal singularities and $\upomega_X$
nef. Then~\mbox{$\Bim(X)$} acts on $X$ by pseudo-automorphisms.
 \end{corollary}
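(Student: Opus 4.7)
The plan is to apply Lemma~\ref{lemma:pseudoaut} twice. Given any $\chi\in\Bim(X)$, I apply the lemma with $X'=X$ and the bimeromorphic map $\chi$ itself: the hypotheses hold by the assumptions on $X$, so the conclusion is that $\chi^{-1}$ does not contract any divisor. Applying the same lemma instead to $\chi^{-1}\colon X\dashrightarrow X$, and using that $X$ serves as both source and target, I obtain that $\chi=(\chi^{-1})^{-1}$ does not contract any divisor either.

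Next, I unpack these two statements to conclude that $\chi$ is a pseudoautomorphism. Choose a common resolution $p,q\colon Y\to X$ of $\chi$ exactly as in the proof of Lemma~\ref{lemma:pseudoaut}, with $\Exc(p)$ and $\Exc(q)$ of pure codimension one. In the notation there, the assertion that $\chi^{-1}$ contracts no divisor means $\sum F_j=0$, while the assertion that $\chi$ contracts no divisor means $\sum G_k=0$; hence $\Exc(p)=\Exc(q)=\sum E_i$. Both $p$ and $q$ then restrict to isomorphisms on the open set $Y\setminus\Exc(p)$. Setting $U_1=X\setminus p(\Exc(p))$ and $U_2=X\setminus q(\Exc(q))$, each $U_i$ has complement of codimension at least two in $X$, and $\chi=q\comp p^{-1}$ restricts to an isomorphism $U_1\xrightarrow{\sim} U_2$, which is precisely the definition of a pseudoautomorphism.

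The only potentially subtle point is the passage from ``$\chi$ and $\chi^{-1}$ contract no divisors'' to ``$\chi$ is an isomorphism in codimension one.'' But this is really just bookkeeping on the common resolution already constructed in the proof of Lemma~\ref{lemma:pseudoaut}, and so I do not expect any serious obstacle. Everything else is a direct application of the preceding lemma.
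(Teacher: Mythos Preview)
Your proof is correct and follows exactly the approach intended by the paper, which states the corollary without proof as an immediate consequence of Lemma~\ref{lemma:pseudoaut}. The double application of the lemma to $\chi$ and $\chi^{-1}$, together with your bookkeeping on the common resolution to convert ``neither $\chi$ nor $\chi^{-1}$ contracts a divisor'' into ``isomorphism in codimension one,'' is precisely the expected argument.
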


The following result is due to A.\,Fujiki.

\begin{theorem}[{\cite[Corollary 3.3]{Fujiki:bimeromorphic}}]
\label{theorem:Fujiki-pseudo}
Let $X$ be a compact K\"ahler manifold, and let $g$ be its pseudoautomorphism.
Suppose that for a K\"ahler class $\alpha$ on $X$ its push-forward $g_*\alpha$ is again
a K\"ahler class. Then $g^{-1}$ is a morphism.
\end{theorem}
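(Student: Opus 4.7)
The plan is to resolve the indeterminacy of $g$, compare the pull-backs of $\alpha$ and $g_*\alpha$ using the Negativity Lemma in two directions, and then match up the exceptional loci of the two projections.

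First I would take a common resolution: a compact K\"ahler manifold $Y$ together with proper bimeromorphic morphisms $p,q\colon Y\to X$ such that $q = g\comp p$ on the open subset where $g$ is defined. Consider the class
\[
D = q^*(g_*\alpha) - p^*\alpha \in H^{1,1}(Y,\RR).
\]
On the open set where $p$ and $q$ are both biholomorphisms the two summands agree as smooth forms, since $g$ is a pseudoautomorphism and hence $g^*g_*\alpha = \alpha$ wherever $g$ is a biholomorphism. Thus $D$ admits a smooth representative vanishing off the exceptional locus of the resolution; the standard Hodge-theoretic arguments in the compact K\"ahler setting then let us write $D$ as an $\RR$-linear combination of classes of exceptional divisors, so in particular $D$ is $\RR$-Cartier.

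Next I would apply Lemma~\ref{lemma:Negativity} to $q$. By the projection formula
\[
q_*D = q_*q^*(g_*\alpha) - q_*p^*\alpha = g_*\alpha - g_*\alpha = 0,
\]
so $q_*D$ is effective. For any $q$-exceptional curve $C\subset Y$ one has $q_*C = 0$, hence
\[
-D\cdot C = \alpha\cdot p_*C - (g_*\alpha)\cdot q_*C = \alpha\cdot p_*C \ge 0
\]
since $\alpha$ is K\"ahler. Thus $-D$ is $q$-nef, and the Negativity Lemma yields that $D$ itself is effective. Running the same argument for the pseudoautomorphism $g^{-1}$ with the K\"ahler class $g_*\alpha$ (for which $(g^{-1})_*(g_*\alpha) = \alpha$ is K\"ahler), using the same $Y$ with the roles of $p$ and $q$ swapped, produces $-D$ effective as well. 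Therefore $D=0$, i.e.\ $q^*(g_*\alpha) = p^*\alpha$.

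Finally, for any $q$-exceptional curve $C$ the equality $D=0$ gives $p^*\alpha\cdot C = (g_*\alpha)\cdot q_*C = 0$, forcing $p_*C = 0$ by K\"ahlerness of $\alpha$; so $C$ is contracted by $p$ as well. The same reasoning extends to arbitrary irreducible subvarieties contracted by $q$ (if the image under $p$ were positive-dimensional, it would contain a curve one could lift to a $q$-exceptional but not $p$-exceptional curve). Since the fibers of $q$ are connected (Stein factorization, together with the normality of $X$), $p$ is constant on every fiber of $q$; in other words, the meromorphic map $g^{-1} = p\comp q^{-1}$ extends to a morphism $X\to X$. I expect the main obstacle to be the initial step of representing $D$ as an $\RR$-linear combination of exceptional divisor classes: this is the Hodge-theoretic input where the smooth K\"ahler hypothesis is genuinely used, and it is precisely this step that would need to be generalized in order to extend the theorem to the singular K\"ahler setting.
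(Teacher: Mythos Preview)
The paper does not supply its own proof of this theorem; it simply quotes the result from Fujiki's paper and uses it as a black box. So there is no in-paper argument to compare against.

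Your approach via a common resolution and the Negativity Lemma is a legitimate modern route to this statement, but two steps need repair. First, the claim that $p^*\alpha$ and $q^*(g_*\alpha)$ ``agree as smooth forms'' on the locus where $g$ is biholomorphic is not correct: $\alpha$ and $g_*\alpha$ are cohomology \emph{classes}, and even after choosing K\"ahler representatives $\omega,\omega'$ there is no reason to have $g^*\omega'=\omega$ pointwise. The right justification is purely cohomological: one checks $p_*D=0$ (and $q_*D=0$) using that $g$ is an isomorphism in codimension one, and then uses that for a bimeromorphic morphism $p$ between smooth compact K\"ahler manifolds the kernel of $p_*$ on $H^{1,1}_{\RR}$ is spanned by the classes of the $p$-exceptional prime divisors. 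This is exactly the Hodge-theoretic input you single out at the end, and it does hold in this setting; with it in hand, your two applications of Lemma~\ref{lemma:Negativity} (in its $\RR$-divisor form) give $D=0$.

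Second, the final deduction that $g^{-1}$ is a morphism is incomplete. From $D=0$ you correctly get that every $q$-contracted curve is $p$-contracted, but the parenthetical about lifting a curve from $p(V)$ back into a higher-dimensional $q$-fiber $V$ is not justified. A clean fix: once $[p^*\alpha]=[q^*(g_*\alpha)]$, pair with $\omega_Y^{\dim V-1}$ (for a K\"ahler form $\omega_Y$ on $Y$) against any irreducible $V\subset Y$. Representing $p^*\alpha$ by the pullback of an actual K\"ahler form, the integral $\int_V p^*\alpha\wedge\omega_Y^{\dim V-1}$ is non-negative and vanishes iff $p^*\alpha|_V=0$ as a form, i.e.\ iff $p|_V$ is constant; and the same holds for $q$. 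Since the two cohomological pairings coincide, $p|_V$ is constant exactly when $q|_V$ is. Applying this to the irreducible components of a $q$-fiber and using connectedness of fibers shows that $g^{-1}=p\comp q^{-1}$ extends holomorphically.
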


Theorem~\ref{theorem:Fujiki-pseudo} allows to study the group of pseudoautomorphisms of compact K\"ahler manifolds.

\begin{proposition}\label{proposition:pseudo}
Let $X$ be a compact K\"ahler manifold. Then the group of its pseudoautomorphisms is Jordan.
\end{proposition}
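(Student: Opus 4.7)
The plan is to reduce the Jordan property for $\operatorname{PAut}(X)$ to the Jordan property for $\Aut(X)$ (which is Theorem~\ref{theorem:Kim}), via Fujiki's theorem (Theorem~\ref{theorem:Fujiki-pseudo}) and Minkowski's theorem on $\mathrm{GL}_n(\ZZ)$.

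First, I would consider the natural action of pseudoautomorphisms on the second cohomology. Since a pseudoautomorphism restricts to a biholomorphism outside subsets of codimension at least two, it induces a well-defined automorphism of $H^2(X,\ZZ)$; this gives a homomorphism
\[
\rho\colon\operatorname{PAut}(X)\xar\Aut\bigl(H^2(X,\ZZ)\bigr).
\]
By Corollary~\ref{corollary:Minkowski}, the image $\rho(\operatorname{PAut}(X))$ has bounded finite subgroups.

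Next, I would show that $\ker\rho\subset\Aut(X)$. Let $g\in\ker\rho$ and let $\alpha\in H^2(X,\RR)$ be a K\"ahler class. Since $g_*$ acts trivially on $H^2(X,\ZZ)$, it acts trivially on $H^2(X,\RR)$, so $g_*\alpha=\alpha$ is K\"ahler. By Theorem~\ref{theorem:Fujiki-pseudo}, $g^{-1}$ is then holomorphic. Since $g^{-1}\in\ker\rho$ as well, the same argument applied to $g^{-1}$ shows that $g$ is holomorphic; hence $g\in\Aut(X)$. By Theorem~\ref{theorem:Kim}, the group $\Aut(X)$ is Jordan, and therefore so is the subgroup $\ker\rho$.

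Finally, I would apply Lemma~\ref{lemma:group-theory} to the exact sequence
\[
1\xar\ker\rho\xar\operatorname{PAut}(X)\xar\rho(\operatorname{PAut}(X))
\]
to conclude that $\operatorname{PAut}(X)$ is Jordan. The main subtlety is the second step: one has to be careful that a pseudoautomorphism really does induce a well-defined action on $H^2(X,\ZZ)$ compatible with the push-forward of K\"ahler classes, so that the hypothesis of Theorem~\ref{theorem:Fujiki-pseudo} is actually verified. This is, however, a standard consequence of the fact that the indeterminacy loci of $g$ and $g^{-1}$ have codimension at least two, which allows one to compute $g_*$ on $H^2$ via a common resolution without losing information on $(1,1)$-classes.
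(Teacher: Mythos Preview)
Your proof is correct and follows essentially the same route as the paper: act on $H^2(X,\ZZ)$, use Corollary~\ref{corollary:Minkowski} on the image, use Theorem~\ref{theorem:Fujiki-pseudo} to see the kernel lies in $\Aut(X)$, and conclude via Theorem~\ref{theorem:Kim} and Lemma~\ref{lemma:group-theory}. Your write-up is in fact slightly more explicit than the paper's, spelling out why both $g$ and $g^{-1}$ are holomorphic and flagging the well-definedness of the $H^2$-action.
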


\begin{proof}
The group $\operatorname{PAut}(X)$ naturally acts on $H^2(X,\ZZ)$.
Let $\operatorname{PAut}'(X)$ be the kernel of this action. Then $\operatorname{PAut}'(X)$ preserves the K\"ahler class.
By Theorem~\ref{theorem:Fujiki-pseudo} the group $\operatorname{PAut}'(X)$ consists of biholomorphic automorphisms.
Hence the group $\operatorname{PAut}'(X)$
is Jordan by Theorem~\ref{theorem:Kim}.
On the other hand, the quotient
$\operatorname{PAut}(X)/\operatorname{PAut}'(X)$
acts faithfully on the finitely generated abelian group~\mbox{$H^2(X,\ZZ)$}, and thus has bounded finite subgroups
by Corollary~\ref{corollary:Minkowski}.
Therefore, the group~\mbox{$\operatorname{PAut}(X)$}
is Jordan by Lemma~\ref{lemma:group-theory}.
\end{proof}

\begin{remark}
In \cite{Fujiki:bimeromorphic}, Theorem~\ref{theorem:Fujiki-pseudo} is proved for manifolds (smooth varieties).
We do not know if this result, and thus Proposition~\ref{proposition:pseudo}, can be generalized to the case of
singular K\"ahler varieties.
\end{remark}

Applying Proposition~\ref{proposition:pseudo} together with Corollary~\ref{corollary:pseudoaut}, we obtain

\begin{corollary}\label{corollary:nef-pseudo}
Let $X$ be a compact K\"ahler manifold such that $\upomega_X$
is nef.
Then the group $\Bim(X)$ is Jordan.
 \end{corollary}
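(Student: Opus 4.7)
The plan is very short: this corollary is essentially a direct combination of the two preceding results, and the task is simply to check their hypotheses.

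First I would observe that a compact K\"ahler manifold is in particular a normal complex variety with (trivially) terminal singularities, and by hypothesis $\upomega_X$ is nef. Therefore Corollary~\ref{corollary:pseudoaut} applies and gives the key reduction: every element of $\Bim(X)$ acts on $X$ by a pseudoautomorphism. Phrased as an equality of groups, this says
\[
\Bim(X)=\operatorname{PAut}(X).
\]
Indeed, the content of Corollary~\ref{corollary:pseudoaut} is exactly that no bimeromorphic selfmap nor its inverse can contract a divisor when $\upomega_X$ is nef and the singularities are terminal, which is the defining property of a pseudoautomorphism.

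Next I would invoke Proposition~\ref{proposition:pseudo}, which asserts that for any compact K\"ahler manifold the group $\operatorname{PAut}(X)$ is Jordan. Combined with the identification above, this immediately yields that $\Bim(X)$ is Jordan, which is the desired conclusion.

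There is no real obstacle: both ingredients are already proved in the excerpt, and the K\"ahler manifold hypothesis is strong enough that no further reductions (e.g.\ passing to a resolution or a minimal model) are needed. The only thing to be careful about is checking that the hypotheses of Corollary~\ref{corollary:pseudoaut} are literally satisfied by a smooth $X$, which is immediate since smooth points are terminal.
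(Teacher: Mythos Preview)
Your proof is correct and matches the paper's approach exactly: the paper derives this corollary in one line by combining Corollary~\ref{corollary:pseudoaut} (so that $\Bim(X)=\operatorname{PAut}(X)$) with Proposition~\ref{proposition:pseudo}. There is nothing to add.
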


\section{Albanese map}
\label{section:Albanese}

In this section we collect information about Albanese maps of compact complex manifolds.

Let $X$ be a compact complex manifold.
We denote by
$$
\alb\colon X\xlongrightarrow{\hspace{1.8em}} \Alb(X)
$$
the Albanese morphism of $X$, see \cite[Definition~9.6]{Ueno1975}.
The map $\alb$ has the following universal property.
If $\zeta\colon X\to T$ is a morphism to an arbitrary complex
torus $T$, then there exists a unique homomorphism of complex tori~\mbox{$\xi\colon \Alb(X)\to T$} that fits into the commutative diagram
$$
\xymatrix{
X\ar@{->}[r]^\zeta\ar@{->}[d]_{\alb} & T\\
\Alb(X)\ar@{->}[ru]_\xi &
}	
$$

\begin{remark}
More generally, the Albanese morphism is well defined (and has the above universal property)
for compact complex varieties with rational singularities, see~\mbox{\cite[Theorem 3.3]{Graf2018}}
and~\mbox{\cite[Remark 3.4]{Graf2018}}.
\end{remark}

The next assertion is well known to experts,
but we provide its proof for the reader's convenience.
We will use it several times without further reference in the sequel.

\begin{proposition}
Let $X$ be a compact complex manifold.
There is a natural biregular action of the group $\Bim(X)$ on $\Alb(X)$ such that the
morphism $\alb$ is equivariant.
\end{proposition}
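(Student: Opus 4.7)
The plan is to resolve every $\chi\in\Bim(X)$ and push it onto $\Alb(X)$ through the universal property of the Albanese morphism. Given such a $\chi$, I would first choose a compact complex manifold $\tilde{X}$ and proper bimeromorphic morphisms $p,q\colon\tilde{X}\to X$ with $q=\chi\comp p$; then $\alb\comp p$ and $\alb\comp q$ are honest morphisms from $\tilde{X}$ to the complex torus $\Alb(X)$. After fixing compatible base points, the universal property of $\alb_{\tilde{X}}\colon\tilde{X}\to\Alb(\tilde{X})$ furnishes unique morphisms of complex tori $\hat{p},\hat{q}\colon\Alb(\tilde{X})\to\Alb(X)$ factoring $\alb\comp p$ and $\alb\comp q$, respectively.

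The key observation is that $\hat{p}$ is an isomorphism of complex tori. This is the standard bimeromorphic invariance of the Albanese variety of compact complex manifolds, a consequence of the bimeromorphic invariance of $H^1(-,\OOO)$ and of $H_1(-,\ZZ)/\mathrm{torsion}$ for smooth compact models (implicit in the remark on the Albanese morphism recalled in the text). Granting this, I would set
\[
\chi_{*}:=\hat{q}\comp\hat{p}^{-1}\in\Aut\bigl(\Alb(X)\bigr),
\]
which is biregular by construction. Independence of the chosen resolution follows by dominating any two such resolutions by a third one and invoking the uniqueness clause of the universal property; the same trick, applied to a resolution that simultaneously dominates the indeterminacy loci of two selfmaps $\chi_{1},\chi_{2}\in\Bim(X)$, yields the multiplicativity $(\chi_{1}\comp\chi_{2})_{*}=(\chi_{1})_{*}\comp(\chi_{2})_{*}$, so one obtains a genuine group homomorphism $\Bim(X)\to\Aut(\Alb(X))$. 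Equivariance $\alb\comp\chi=\chi_{*}\comp\alb$ as meromorphic maps is then read off directly from the diagram $X\xleftarrow{\,p\,}\tilde{X}\xrightarrow{\,q\,}X$ together with the defining relations $\alb\comp p=\hat{p}\comp\alb_{\tilde{X}}$ and $\alb\comp q=\hat{q}\comp\alb_{\tilde{X}}$.

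The main technical point will be the base-point bookkeeping needed to upgrade the unique-up-to-translation factorizations supplied by the universal property to genuinely canonical morphisms $\hat{p},\hat{q}$; one handles this by fixing a base point $\tilde{x}_{0}\in\tilde{X}$ with $p(\tilde{x}_{0})=q(\tilde{x}_{0})=x_{0}$, possible after a small modification of $\tilde{X}$, and then normalising all Albanese maps so that $\tilde{x}_{0}$ and $x_{0}$ map to the origins of the corresponding tori. The remaining verifications (well-definedness, multiplicativity, equivariance) are standard diagram chases, and no input beyond the universal property of $\alb$ and the bimeromorphic invariance of $\Alb$ is needed.
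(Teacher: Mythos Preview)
Your approach is correct in outline but takes a different route from the paper. The paper avoids resolutions entirely by invoking Proposition~\ref{proposition:torus-holomorphic}: since $X$ is smooth, the meromorphic map $\alb\comp\varphi\colon X\dashrightarrow\Alb(X)$ is already holomorphic for every $\varphi\in\Bim(X)$, so the universal property of $\alb$ applies directly on $X$ itself to produce a unique $\theta_\varphi\in\Aut(\Alb(X))$ with $\alb\comp\varphi=\theta_\varphi\comp\alb$. Functoriality is then immediate, with no well-definedness or base-point issues to chase. Your route through a resolution $\tilde X$ and the bimeromorphic invariance of the Albanese variety is a legitimate alternative and has the advantage of not needing Proposition~\ref{proposition:torus-holomorphic}, at the cost of the bookkeeping you describe.

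One slip in your write-up: the claim that one can choose $\tilde x_0\in\tilde X$ with $p(\tilde x_0)=q(\tilde x_0)=x_0$ is false in general, since this would force $\chi(x_0)=x_0$, and many $\chi\in\Bim(X)$ (e.g.\ a nontrivial translation on a complex torus) have no fixed points at all. This is not fatal to your argument: simply normalise so that $\hat p$ is a homomorphism and allow $\hat q$ to be an affine morphism of tori (a homomorphism composed with a translation); then $\chi_*=\hat q\comp\hat p^{-1}$ is still a biregular automorphism of $\Alb(X)$, which is all the statement requires.
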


\begin{proof}
Let $\varphi\colon X\dashrightarrow X$ be an arbitrary bimeromorphic map.
The composition $\alb\circ\varphi$ is holomorphic by Proposition~\ref{proposition:torus-holomorphic}.
By the universal property of $\alb$
there exist a unique translation
$$
\mathrm{t}_a\colon \Alb(X) \xlongrightarrow{\hspace{1.8em}} \Alb(X)
$$
by an element $a\in \Alb(X)$ and
a unique homomorphism of complex tori
$$
\psi\colon \Alb(X) \xlongrightarrow{\hspace{1.8em}}\Alb(X)
$$
such that
\[
\alb\comp \varphi= \mathrm{t}_a\comp \psi \comp \alb.
\]
In other words, there exists a unique  morphism of abstract  varieties
$$
\theta_{\varphi}= \mathrm{t}_a\comp \psi\colon  \Alb(X) \xlongrightarrow{\hspace{1.8em}}\Alb(X)
$$
that
fits to the following commutative  diagram
\[
\xymatrix@C=47pt{
X\ar@{-->}[r]^{\varphi}\ar[d]^{\alb}&X\ar[d]^{\alb}
\\
\Alb(X)\ar[r]^{\theta_{\varphi}} & \Alb(X)
}
\]
It is easy to see that the correspondence is functorial: for any two
bimeromorphic maps~\mbox{$\varphi'\colon X\dashrightarrow X$}
and $\varphi''\colon X\dashrightarrow X$ one has
\[
 \theta_{\varphi'}\comp  \theta_{\varphi''}=\theta_{\varphi'\comp \varphi''}.
\]
In particular, the correspondence $\varphi \longmapsto \theta_{\varphi}$ defines
a group homomorphism
$$
\Bim(X) \xlongrightarrow{\hspace{1.8em}} \Aut(\Alb(X)).
$$
\end{proof}

\begin{remark}
\label{remark:q-1-connected-fibers}
If $X$ is a normal compact complex variety with rational singularities
and $\q(X)=1$,
then the fibers of the Albanese map $\alb\colon X\to A=\Alb(X)$ are connected.
Indeed, otherwise $\alb$ admits a non-trivial Stein factorisation
$$
\alb\colon X\xlongrightarrow{\hspace{0.7em}\alb'\hspace{0.7em}} A'\xlongrightarrow{\hspace{1.8em}} A.
$$
Let $\beta$ be the composition of $\alb'$ with the embedding of $A'$ into its Jacobian $J(A')$.
Then the map~\mbox{$\beta\colon X\to J(A')$} must factor through $\alb$ by the universal property of the Albanese map, which is
clearly impossible.
\end{remark}

Recall that a compact complex subvariety $Y$ of a complex torus $T$
is said to \emph{generate} $T$
if for some positive integer $n$
every point of $T$ can be represented as a sum of $n$ points of $Y$;
see~\mbox{\cite[Definition~9.13]{Ueno1975}}.
For instance, a proper subtorus of~$T$ (containing the neutral element of the group~$T$)
does \emph{not} generate~$T$.

\begin{proposition}\label{prop:subvariety-in-torus}
Let $T$ be a complex torus, and let $Y\subset T$ be a compact complex
subvariety.
Then there exists a canonically defined fibration $Y\to Z$ whose typical fiber is a subtorus~\mbox{$T_1\subset T$}
and the base $Z$ is a (possibly singular) compact complex variety
with $\dim(Z)=\kod(Z)$. Moreover,~$Z$ is a point if and only if
$Y$ is a translation of a subtorus $T_1\subset T$.
\end{proposition}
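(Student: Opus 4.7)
First, I would define the set-theoretic stabilizer
$$S := \{t \in T : t + Y = Y\},$$
which is a closed subgroup of $T$, and let $T_1 \subseteq T$ be its connected component of the identity, a subtorus. Consider the quotient homomorphism $\pi \colon T \to T' := T/T_1$ and set $Z := \pi(Y)$, a compact complex subvariety of $T'$. The desired canonical fibration is
$$\pi|_Y \colon Y \to Z.$$

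Second, I would check that every fiber of $\pi|_Y$ is a translate of $T_1$. For any $y \in Y$, the containment $T_1 \subseteq S$ implies $y + T_1 \subseteq Y$; conversely every point of $\pi^{-1}(\pi(y))$ differs from $y$ by an element of $T_1$, so
$$(\pi|_Y)^{-1}(\pi(y)) = y + T_1.$$
In particular, every fiber (not merely the typical one) is a translate of the subtorus~$T_1$, and the fibration is canonical because $T_1$ is intrinsically associated to the pair~$(Y, T)$.

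Third, I would prove that $\dim(Z) = \kod(Z)$. The stabilizer of $Z$ in $T'$ coincides with~$S/T_1$. Indeed, if $\bar{s} \in T'$ stabilizes $Z$ and $s \in T$ is any lift, then $s + Y$ and $Y$ are both $T_1$-invariant subsets of~$T$ with the same image under~$\pi$, so they must coincide; hence $s \in S$. Since $T_1$ is the identity component of $S$, the quotient $S/T_1$ is finite. I would then invoke Ueno's theorem \cite[Theorem~10.9]{Ueno1975}, which asserts that a compact complex subvariety of a complex torus whose stabilizer is finite is of general type. Applied to $Z \subset T'$ this yields $\dim(Z) = \kod(Z)$. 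This step is the main obstacle: everything else is a direct manipulation of the stabilizer, but the equality $\dim(Z) = \kod(Z)$ rests on this deep classical result of Ueno.

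Finally, the last equivalence is immediate. If $Z$ is a point, then $Y$ is the unique fiber over it, namely a single translate $y_0 + T_1$. Conversely, if $Y = y_0 + T_1'$ is the translate of some subtorus $T_1' \subseteq T$, then $T_1' \subseteq S$, so $T_1' \subseteq T_1$; since also $\dim T_1' = \dim Y \ge \dim T_1$, we have $T_1 = T_1'$, and hence $Z$ is a single point.
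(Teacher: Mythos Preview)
Your proposal is correct and follows essentially the same approach as the paper: the paper's proof is a terse two-line sketch that takes the largest subtorus $T_1\subset T$ under whose translations $Y$ is invariant (which is exactly the identity component of your stabilizer $S$), sets $Z=Y/T_1$, and refers to \cite[Theorem~10.9]{Ueno1975} and its proof for the details. Your write-up simply unpacks what that reference contains, including the key point that the induced stabilizer of $Z$ in $T/T_1$ is finite and hence $Z$ is of general type.
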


\begin{proof}
Take the largest subtorus of $T_1\subset T$ such that $Y$
is invariant under translations by all elements  of $T_1$, and then put  $Z=Y/T_1$.
See \cite[Theorem~10.9]{Ueno1975}
and its proof for details.
\end{proof}

\begin{lemma}
\label{lemma:Albanese}
Let $X$ be a three-dimensional compact complex
manifold with $\kod(X)\ge 0$.
Suppose that
$\alb$ is not surjective. Then the group
$\Bim(X)$ is Jordan.
\end{lemma}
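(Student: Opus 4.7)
The plan is to use the canonical torus fibration on the Albanese image $Y=\alb(X)$, provided by Proposition~\ref{prop:subvariety-in-torus}, to produce a $\Bim(X)$-equivariant morphism from $X$ onto a variety of general type in its dimension, and then invoke the fibration results of Section~\ref{section:equivariant}.

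Concretely, set $A=\Alb(X)$ and $Y=\alb(X)\subsetneq A$. It is standard that $\alb(X)$ generates $\Alb(X)$, so $Y$ is not contained in a translate of a proper subtorus of~$A$. Applying Proposition~\ref{prop:subvariety-in-torus} to $Y\subset A$ yields a canonical fibration $\tau\colon Y\to Z$ whose typical fiber is a subtorus $T_1\subset A$ and with $\dim Z=\kod(Z)$; by the second assertion of that proposition we have $\dim Z\ge 1$. Since $\dim Z\le\dim Y\le\dim X=3$, we obtain $\dim Z\in\{1,2,3\}$. The natural biregular action of $\Bim(X)$ on $A$ preserves $Y$, and since $\tau$ is canonically built from the inclusion $Y\subset A$, this action descends to a biregular action on $Z$; hence the morphism $\pi=\tau\circ\alb\colon X\to Z$ is $\Bim(X)$-equivariant, and in particular $\Bim(X)=\Bim(X;\pi)$.

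Since $\dim Z=\kod(Z)$, Theorem~\ref{theorem:general-type} says that $\Bim(Z)$ is finite, so the image $h_\pi(\Bim(X))\subset\Bim(Z)$ has bounded finite subgroups. I apply Lemma~\ref{lemma:bounded-base}: it remains to check that $\Bim(F')$ is Jordan for each irreducible component $F'$ of a typical fiber of $\pi$. If $\dim Z=3$ the typical fibers are finite and $\Bim(F')$ is trivial; if $\dim Z=2$ then $F'$ is a compact complex curve and $\Bim(F')=\Aut(\tilde F')$ for its normalization $\tilde F'$, hence Jordan by Remark~\ref{remark:curve}; if $\dim Z=1$ then $F'$ is a compact surface with $\kod(F')\ge 0$ by Lemma~\ref{lemma:fiber-kappa}, so $\Bim(F')$ is (strongly) Jordan by Theorem~\ref{theorem:surface}. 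Lemma~\ref{lemma:bounded-base} therefore yields the Jordan property of $\Bim(X;\pi)=\Bim(X)$. The only potential subtlety is that $Z$ may be singular, but Lemma~\ref{lemma:bounded-base} is stated for arbitrary compact complex varieties as bases and so presents no obstacle; likewise, the $\Bim(X)$-equivariance of $\tau$ is automatic from its canonical construction.
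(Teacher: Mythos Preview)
Your proof is correct and follows essentially the same route as the paper's own argument: both pass through the canonical fibration $Y\to Z$ of Proposition~\ref{prop:subvariety-in-torus}, use that $Y$ generates $\Alb(X)$ to ensure $\dim Z\ge 1$, invoke Theorem~\ref{theorem:general-type} for finiteness of $\Bim(Z)$, and conclude via Lemma~\ref{lemma:bounded-base} after checking the fibers have Jordan $\Bim$-groups. The only cosmetic differences are that you separate out the case $\dim Z=3$ explicitly and add the (unnecessary) normalization step for the curve fibers, since a typical fiber of $\pi$ is already smooth as $X$ is a manifold.
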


\begin{proof}
Let $Y=\alb(X)\subsetneq\Alb(X)$.
By Proposition~\ref{prop:subvariety-in-torus}
there exists a canonically defined fibration
$$
\gamma\colon Y \xlongrightarrow{\hspace{1.8em}} Z,
$$
where $\dim(Z)=\kod(Z)$. The group $\Bim(Z)$ is finite
by Theorem~\ref{theorem:general-type}.
Recall that $Y$ generates~\mbox{$\Alb(X)$}, see \cite[Lemma~9.14]{Ueno1975}.
Since $Y$ contains the neutral element of the group~\mbox{$\Alb(X)$}, this implies that it is not contained in a proper subtorus of~\mbox{$\Alb(X)$}.
Hence $Z$ is not a point by Proposition~\ref{prop:subvariety-in-torus}.

Now we have a morphism
$$
\lambda=\gamma\circ\alb\colon X \xlongrightarrow{\hspace{1.8em}} Z
$$
that is equivariant with respect to
the group $\Bim(X)$.
Let $F$ be a typical fiber of $\lambda$, and let~$F'$ be its connected
component. Then $F'$ is a compact
complex manifold of dimension at most~$2$, and~\mbox{$\kod(F')\ge 0$} by Lemma~\ref{lemma:fiber-kappa}.
Thus the group $\Bim(F')$ is Jordan by Theorem~\ref{theorem:surface} and Remark~\ref{remark:curve}.
Therefore, the group $\Bim(X)$ is Jordan by
Lemma~\ref{lemma:bounded-base}.
\end{proof}

\begin{corollary}
\label{corollary:Albanese}
Let $X$ be a  three-dimensional compact complex manifold
with $\kod(X)\ge 0$.
Suppose that $\Bim(X)$ is not Jordan. Then
$\dim(\Alb(X))<\dim(X)$. In particular, if $X$ is K\"ahler, then~\mbox{$\q(X)<\dim(X)$}.
\end{corollary}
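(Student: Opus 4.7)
I would prove the contrapositive: if $\dim(\Alb(X)) \ge \dim(X) = 3$, then $\Bim(X)$ is Jordan. The Albanese morphism $\alb\colon X \to \Alb(X)$ is $\Bim(X)$-equivariant (this was established in the proposition preceding Lemma~\xref{lemma:Albanese}), so it provides an equivariant fibration to which the machinery of Section~\xref{section:equivariant} applies.

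The plan is to split into two cases according to whether $\alb$ is surjective. If $\alb$ is not surjective, then Lemma~\xref{lemma:Albanese} applies directly and gives that $\Bim(X)$ is Jordan. So the only remaining case is when $\alb$ is surjective; then the assumption $\dim(\Alb(X)) \ge \dim(X)$ forces equality $\dim(\Alb(X)) = \dim(X)$, and hence $\alb$ is generically finite. In this situation I would invoke Corollary~\xref{corollary:bounded-fiber-finite}: the base $\Alb(X)$ is a complex torus, so its group of bimeromorphic selfmaps is strongly Jordan by Corollary~\xref{corollary:torus}; since $\alb$ is $\Bim(X)$-equivariant and its typical fiber is finite, Corollary~\xref{corollary:bounded-fiber-finite} yields that $\Bim(X)$ is Jordan. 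Combining both cases gives the contrapositive and hence the first assertion~\mbox{$\dim(\Alb(X)) < \dim(X)$}.

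For the final clause about the K\"ahler case, I would use the standard Hodge-theoretic identification $\dim(\Alb(X)) = h^1(X,\OOO_X) = \q(X)$ valid for a compact K\"ahler manifold, which immediately converts \mbox{$\dim(\Alb(X)) < \dim(X)$} into \mbox{$\q(X) < \dim(X)$}.

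There is no serious obstacle here, since everything reduces cleanly to results already proved: the non-surjective case is Lemma~\xref{lemma:Albanese} verbatim, and the generically-finite case is a textbook application of the equivariant fibration lemmas together with the Jordan property for automorphism groups of complex tori. The only minor point worth being careful about is the dichotomy ``not surjective vs.\ generically finite'' under the hypothesis \mbox{$\dim(\Alb(X)) \ge \dim(X)$}: when $\dim(\Alb(X)) > \dim(X)$ the image $\alb(X)$ is automatically a proper subvariety of the Albanese torus, so this subcase is absorbed into the non-surjective case, leaving only $\dim(\Alb(X)) = \dim(X)$ to be handled via Corollary~\xref{corollary:bounded-fiber-finite}.
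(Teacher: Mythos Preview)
Your proposal is correct and follows essentially the same route as the paper: the paper also reduces to Lemma~\ref{lemma:Albanese} for the non-surjective case, then uses Corollaries~\ref{corollary:bounded-fiber-finite} and~\ref{corollary:torus} when $\dim(\Alb(X))=\dim(X)$ and $\alb$ is generically finite, and finishes with $\dim(\Alb(X))=\q(X)$ in the K\"ahler case. The only cosmetic difference is that the paper argues directly from the hypothesis ``$\Bim(X)$ not Jordan'' rather than phrasing it as a contrapositive.
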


\begin{proof}
We know from Lemma~\ref{lemma:Albanese} that $\alb$ is surjective.
In particular, this implies that~\mbox{$\dim(\Alb(X))\le \dim(X)$}.
Suppose that $\dim(\Alb(X))=\dim(X)$. Then a typical fiber of~$\alb$
is finite. Applying Corollaries~\ref{corollary:bounded-fiber-finite} and~\ref{corollary:torus},
we see that the group $\Bim(X)$ is Jordan, which
is not the case by assumption.

If $X$ is a compact K\"ahler manifold, then~\mbox{$\dim(\Alb(X))=\q(X)$}.
\end{proof}

\section{Indeterminacy loci}
\label{section:indeterminacy}

In this section we make some observations concerning the indeterminacy loci of bimeromorphic maps
of three-dimensional compact complex varieties.

\begin{lemma}
\label{lemma:Ind-in-kf}
Let $X$ be a normal
three-dimensional compact complex variety,
and let $\chi\colon X \dashrightarrow X$ be a pseudoautomorphism.
Let~\mbox{$\beta\colon X\to Z$} be a morphism to a curve~$Z$
such that $\chi$ maps the fibers of~$\beta$ again to the fibers of~$\beta$.
Then the indeterminacy
locus  $\Ind(\chi)$ is contained in a finite set of fibers of~$\beta$.
\end{lemma}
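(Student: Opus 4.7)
The plan is to show that for very general $z_0\in Z$ the fiberwise restriction $\chi_{z_0}\colon F_{z_0}\dashrightarrow F_{\bar\chi(z_0)}$ of $\chi$ extends to an isomorphism of compact complex surfaces, where $\bar\chi$ is the automorphism of $Z$ induced by $\chi$. This will force $\Ind(\chi)\cap F_{z_0}=\emptyset$ for generic $z_0$, so $\Ind(\chi)$ is contained in the preimage under $\beta$ of a finite subset of $Z$, which is the desired finite union of fibers.

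First, $\chi$ induces a bimeromorphic selfmap of $Z$, which is automatically an automorphism since $Z$ is a smooth curve. Since $\beta$ is flat (morphism to a smooth curve) and $X$ is normal, by generic smoothness on the smooth locus of $X$ there is a non-empty Zariski open $U\subseteq Z$ such that $F_{z_0}$ and $F_{\bar\chi(z_0)}$ are normal compact complex surfaces for every $z_0\in U$. The pseudoautomorphism $\chi$ is an isomorphism from an open $V\subseteq X$ to an open $V'\subseteq X$ with both $X\setminus V$ and $X\setminus V'$ of dimension at most $1$. Splitting their irreducible components into vertical ones (contained in a fiber of $\beta$) and horizontal ones (surjecting onto $Z$), and shrinking $U$ to exclude the finitely many $z$'s arising as images of vertical components, for every $z_0\in U$ the intersections $F_{z_0}\cap(X\setminus V)$ and $F_{\bar\chi(z_0)}\cap(X\setminus V')$ are finite sets $\{P_i\}$ and $\{Q_j\}$. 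Thus $\chi_{z_0}$ is a bimeromorphic map of normal compact surfaces that restricts to an isomorphism $F_{z_0}\setminus\{P_i\}\xrightarrow{\sim} F_{\bar\chi(z_0)}\setminus\{Q_j\}$.

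The main step is the following surface statement: a bimeromorphic map $\phi\colon S\dashrightarrow S'$ between normal compact complex surfaces that restricts to an isomorphism outside finite sets on both sides is itself an isomorphism. For each indeterminacy point $P\in S$ consider the total transform $\phi(P)\subseteq S'$; suppose it were a curve $\Gamma$. Picking $Q\in\Gamma$ outside the finite non-isomorphism locus of $\phi^{-1}$, the closure $G$ of the graph of $\phi$ in $S\times S'$ contains both $(P,Q)$ (because $Q\in\phi(P)$) and $(\phi^{-1}(Q),Q)$ with $\phi^{-1}(Q)\ne P$ (because $\phi^{-1}$ is an isomorphism near $Q$), yielding two isolated points in the fiber over $Q$ of the proper birational morphism $\pi_2\colon G\to S'$. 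Since $G$ is irreducible and $S'$ is normal, this contradicts Zariski's connectedness theorem. Hence $\phi(P)$ is a single point, and by Zariski's main theorem $\pi_1\colon G\to S$ is an isomorphism near $P$, so $\phi$ extends holomorphically at $P$; symmetrically $\phi^{-1}$ extends at each of its indeterminacy points, and the two extensions are mutually inverse morphisms.

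Applying this surface lemma to $\chi_{z_0}$ for every $z_0\in U$ concludes the proof. The main obstacle is the surface lemma, where the pseudoautomorphism hypothesis on $\chi$ enters crucially: it guarantees that the non-isomorphism loci of $\chi_{z_0}$ and $\chi_{z_0}^{-1}$ are \emph{finite} sets, which in turn is exactly what ensures that a hypothetical positive-dimensional total transform produces a disconnected two-point fiber of the graph, thereby triggering the contradiction with Zariski's connectedness theorem.
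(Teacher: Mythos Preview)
Your route is genuinely different from the paper's. The paper argues by contradiction on the normalized graph $\tilde Y$: if $C\subseteq\Ind(\chi)$ were horizontal, a $p$-exceptional surface $E$ over $C$ would map under $q$ to a curve $C'$ whose generic $q$-fiber $\Gamma\subset E$ dominates $C$, hence dominates $Z$; but $q(\Gamma)$ is a single point, forcing $C'$ to lie in every fiber of $\beta$. You instead restrict to a generic fiber and invoke a self-contained surface lemma. Your surface lemma is correct and its proof via Zariski connectedness is clean.

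However, the final implication ``$\chi_{z_0}$ is an isomorphism $\Rightarrow \Ind(\chi)\cap F_{z_0}=\emptyset$'' is not justified and is the real gap. In general one only has $\Ind(\chi_{z_0})\subseteq\Ind(\chi)\cap F_{z_0}$, not equality: a meromorphic map can be indeterminate at $P$ while its restriction to a subvariety through $P$ extends holomorphically (e.g.\ the map $w/z$ on $\mathbb{C}^2$ restricted to the line $\{w=0\}$). Concretely, the graph $G_{\chi_{z_0}}\subset F_{z_0}\times F_{\bar\chi(z_0)}$ is the closure of $\{(x,\chi(x)):x\in F_{z_0}\cap V\}$ taken \emph{inside the fiber product}, and this can be strictly smaller than $G_\chi\cap(F_{z_0}\times F_{\bar\chi(z_0)})$, which also picks up limits of $(x,\chi(x))$ along directions transverse to $F_{z_0}$. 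Your surface lemma controls the former, whereas $\Ind(\chi)$ is governed by the latter.

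The fix is to run your Zariski-connectedness argument one level up, on $\pi_2\colon G_\chi\to X$ itself rather than on $G_{\chi_{z_0}}$. If $P\in\Ind(\chi)\cap F_{z_0}$ for a generic $z_0$, then the total transform $\chi(P)$ (computed in $X$) is a curve contained in $F_{\bar\chi(z_0)}$; for a generic point $y$ of this curve one has $y\in V'$, so $(\chi^{-1}(y),y)$ is an isolated point of the fiber $\pi_2^{-1}(y)$ (because $U_\chi=G_\chi\cap(V\times V')$ is open in $G_\chi$ and maps isomorphically to $V'$), while $(P,y)$ with $P\notin V$ is a second, distinct point of that fiber. Since $X$ is normal this contradicts Zariski connectedness for $\pi_2$. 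With this adjustment your argument goes through and becomes a variant of the paper's. (A minor side remark: the generic fiber $F_{z_0}$ need not be normal when $X$ is merely normal, but in the corrected argument this is irrelevant since one works on $X$ itself.)
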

\begin{proof}
Assume the contrary, i.e. there is an irreducible curve  $C\subset \Ind(\chi)$
that dominates $Z$.
Let~$\tilde Y$ be the normalization of the graph of $\chi$. Thus, there is a commutative diagram
\[
\xymatrix@R=7pt{
&\tilde {Y}\ar[dl]_p\ar[dr]^q&
\\
X\ar[dr]_\beta\ar@{-->}[rr]^{\chi}&&X\ar[dl]^\beta
\\
&Z&
}
\]
Since $\chi$ is not defined at a typical point of $C$, there exists a two-dimensional component $E$ of the $p$-exceptional set
that dominates $C$. Since the map $\tilde Y\to X\times X$ is finite onto its image,
no curves on $\tilde Y$ are contracted by both maps $p$ and $q$.
Hence $C'=q(E)$ is a curve, and
a typical fiber~$\Gamma$ of~\mbox{$q_E\colon E\to C'$} dominates $C$.
Thus $\Gamma$ meets the proper transform $\tilde F$ of any fiber~\mbox{$F=\beta^{-1}(z)$},
and so~$q(\tilde F)$ contains $C'$. In other words, $C'$ is contained in every fiber of $\beta$, which gives a contradiction.
\end{proof}

The next proposition is a relative analog of the well-known assertion about decomposition of certain maps into flops,
see for instance~\mbox{\cite[Theorem~4.9]{Kollar:flops}}.
The proof in our case follows the same scheme.
We outline it for convenience of the reader.

\begin{proposition}
\label{prop:pseudo-flips}
Let $f\colon X\to S$ and  $f'\colon X'\to S$ be proper morphisms with one-dimensional fibers,
where $X$ and $X'$ are three-dimensional K\"ahler varieties with terminal $\QQ$-factorial singularities and $S$ is a smooth surface.
Suppose that both $K_X$ and $K_{X'}$ are nef over~$S$.
Let $\chi\colon X\dasharrow X'$ be a bimeromorphic map such that it is an isomorphism in codimension one, and
the indeterminacy loci $\Ind(\chi)$ and $\Ind(\chi^{-1})$ are contained in (a finite set of) fibers
of~$f$ and~$f'$, respectively.
Suppose that there exists a commutative diagram
\[
\xymatrix{
X\ar[d]_{f}\ar@{-->}^{\chi}[rr]&&X'\ar[d]^{f'}
\\
S\ar@{-->}[rr]^{\zeta}&&S
}
\]
where $\zeta$ is a bimeromorphic map of $S$.
Then $\zeta$ is an isomorphism, and $\chi$ is a composition of flops in the curves that are contained in (a finite set of) fibers over $S$.
\end{proposition}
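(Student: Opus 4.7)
The plan is to treat the two conclusions separately: first establish that $\zeta$ is biholomorphic, and then, after identifying the two copies of $S$ via $\zeta$, apply the standard flop-decomposition argument of Koll\'ar in the relative K\"ahler MMP over $S$.

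For the first point, I would observe that since $\Ind(\chi)$ is contained in a finite union of fibers of $f$, there is a finite subset $\Sigma\subset S$ such that $\chi$ is holomorphic on $f^{-1}(S\setminus\Sigma)$, and hence $\zeta$ is defined on $S\setminus\Sigma$. Symmetrically, $\zeta^{-1}$ is defined off a finite subset of $S$, so $\zeta$ is a bimeromorphic self-map of the smooth compact surface $S$ with both indeterminacy loci of dimension zero. Any divisor contracted by $\zeta$ would have to map into the indeterminacy locus of $\zeta^{-1}$, which is finite, contradiction; so neither $\zeta$ nor $\zeta^{-1}$ contracts a divisor. The standard structure theory of bimeromorphic maps of smooth compact surfaces then gives that $\zeta$ is an isomorphism.

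For the second point, I would identify the two copies of $S$ via $\zeta$ and regard $\chi$ as a bimeromorphic map over $S$ of $\QQ$-factorial terminal K\"ahler threefolds that is an isomorphism in codimension one, with $K_X$ and $K_{X'}$ both nef over $S$. Pick a relatively $f'$-ample (or relatively K\"ahler) $\QQ$-Cartier divisor $H'$ on $X'$ and let $H$ be its strict transform on $X$, which is again $\QQ$-Cartier by $\QQ$-factoriality. If $H$ happens to be $f$-nef, then Lemma~\ref{lemma:Negativity} applied on a common resolution of $\chi$ (exactly as in the proof of Lemma~\ref{lemma:pseudoaut}) forces $\chi$ to be an isomorphism. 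Otherwise there is a curve in some fiber of $f$ on which $H$ is negative, and the relative K\"ahler cone theorem for threefolds from \cite{Hoering-Peternell-2016}, applied to $K_X+\varepsilon H$ for small $\varepsilon>0$, produces a $K_X$-trivial, $H$-negative extremal ray $R\subset \overline{\mathrm{NE}}(X/S)$. Its contraction is small (since $R$ is $K_X$-trivial and $X$ is $\QQ$-factorial terminal), and the corresponding flop yields $X_1\to S$ satisfying all the same hypotheses, together with an induced $\chi_1\colon X_1\dasharrow X'$ which is again an isomorphism in codimension one.

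The induction is closed by a standard invariant --- a Shokurov-type difficulty function, or equivalently the number of prime divisors of a fixed common resolution of the successive maps that lie above the flopping loci --- together with termination of threefold flops in the relative K\"ahler setting. Each flop occurs in a curve contained in a fiber of the relevant map to $S$, so the final decomposition of $\chi$ indeed consists of flops in curves lying in (finitely many) fibers over $S$. The main obstacle is precisely the invocation of this relative K\"ahler MMP machinery --- the cone theorem, the existence of small $K$-trivial contractions and of their flops, and termination --- in the three-dimensional K\"ahler category over a smooth surface base; all of these ingredients are furnished by the work of H\"oring--Peternell cited in the paper, so the remaining task is essentially bookkeeping to fit our setup into their framework, which is why the authors describe their argument as following the scheme of the projective case.
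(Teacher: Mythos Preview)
Your overall strategy matches the paper's in spirit, but you diverge from it at the key technical step. For the first claim the paper simply notes that $\zeta$ restricts to an isomorphism of the complement of a finite set in the smooth surface $S$ and then invokes Hartogs's extension theorem; your argument via ``no contracted divisors'' reaches the same conclusion by a slightly longer route (and tacitly assumes $S$ compact, which the proposition does not).

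The substantive difference is in the flop decomposition. The paper does \emph{not} appeal to a relative K\"ahler cone theorem or a global relative K\"ahler MMP. Instead it first replaces $S$ by a small analytic neighbourhood of a single point; because the fibres of $f$ and $f'$ are one-dimensional, both morphisms then become \emph{projective}, and one can produce a divisor $D'$ on $X'$ that is very ample over $S$ by taking discs transverse to the central fibre. With projectivity in hand the paper runs the $(K_X+\varepsilon D)$-MMP over $S$ citing Nakayama's projective relative MMP, and each step is a $K_X$-flop. Your plan to work globally and cite \cite{Hoering-Peternell-2016} for the relative cone theorem, contractions, flops, and termination is precisely where things become delicate: their main results concern the absolute MMP on compact K\"ahler threefolds, and in the non-projective global situation a relatively ample $\QQ$-Cartier \emph{divisor} $H'$ on $X'/S$ need not exist (a relatively K\"ahler class is not the same thing, and its ``strict transform'' is not governed by $\QQ$-factoriality). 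The localisation-to-projectivity trick is exactly what sidesteps these issues, and it is the point your outline is missing; once you insert it, the rest of your argument collapses to the paper's.
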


\begin{proof}[Outline of the proof]
By assumption, $\chi$ is an automorphism outside a finite union of fibers of $f$.
This means that $\zeta$ induces an automorphism of a smooth open subset $S^o\subset S$
that is a complement to a finite subset of $S$. Therefore, by Hartogs's extension theorem $\zeta$ is
an isomorphism on the whole~$S$.

To prove that $\chi$ is a composition of flops in the curves contracted by $f$,
we may assume that $S$ is a small analytic neighborhood of a point  $s\in S$.
Thus, we may assume that $f'$ is a projective morphism, because it has one-dimensional fibers.
In other words, there exists a divisor $D'$ on $X'$ that is very ample over $S$:
one can construct $D'$ as a union of discs meeting the fiber $f'^{-1}(s)$ transversely.
Similarly, we see that the morphism $f$ is projective.

Let $D\subset X$ be the proper transform of $D'$.
For small positive $\varepsilon$, run a $(K_X+\varepsilon D)$-MMP over~$S$.
This is possible because $f$ is projective, see~\mbox{\cite[\S4]{Nakayama:semi}}.
Every step of this MMP is a flop, and we end up with~$X'$.
\end{proof}

\begin{corollary}
\label{corollary:Kollar}
In the notation of Proposition \xref{prop:pseudo-flips}, set~\mbox{$\Theta=f(\Sing(X))$}.
Then~\mbox{$\zeta(\Theta)=\Theta$}.
\end{corollary}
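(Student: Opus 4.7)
By Proposition~\ref{prop:pseudo-flips}, $\zeta$ is an isomorphism of $S$ and $\chi$ factors as a composition $\chi = \chi_n \comp \cdots \comp \chi_1$ of flops over $S$, with each $\chi_i \colon X_{i-1} \dashrightarrow X_i$ a flop in a curve $C_i$ contained in the fiber of the intermediate structure map $f_{i-1}\colon X_{i-1}\to S$ over some point $p_i \in S$. Setting $X_0 = X$ and $X_n = X'$, my plan is to establish invariance of the image of the singular locus in~$S$ under each elementary flop, and then transfer this invariance to $\zeta$ via the commutative diagram.

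The main local claim is that $f_{i-1}(\Sing(X_{i-1})) = f_i(\Sing(X_i))$ inside~$S$ for each~$i$. Outside $p_i$ this is immediate, since $\chi_i$ restricts there to a biholomorphism intertwining $f_{i-1}$ and $f_i$, so the singular loci correspond with matching images. At~$p_i$, both $X_{i-1}$ and $X_i$ are small $\QQ$-factorializations of the same local analytic flop base~$Y_i$, and by the symmetry of three-dimensional terminal flops one has $\Sing(X_{i-1}) \cap C_i \neq \emptyset$ if and only if $\Sing(X_i) \cap C_i^+ \neq \emptyset$, where $C_i^+ \subset X_i$ is the flopped curve. Iterating over $i$ yields $f(\Sing(X)) = f_n(\Sing(X_n))$ as subsets of~$S$, and the identification $(X_n,f_n) = (X',f')$ produced by the MMP construction in the proof of Proposition~\ref{prop:pseudo-flips} then gives $\Theta = f'(\Sing(X'))$.

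Now I would use the commutative diagram $f' \comp \chi = \zeta \comp f$. For any $x \in \Sing(X) \setminus \Ind(\chi)$, the map~$\chi$ is biregular near~$x$, so $\chi(x) \in \Sing(X')$ and $\zeta(f(x)) = f'(\chi(x)) \in f'(\Sing(X')) = \Theta$. The remaining singular points of~$X$ lie on flopping curves and have $f$-images among the~$p_i$'s, whose $\zeta$-images lie in $\Theta$ by the identity established in the previous paragraph. Applying the same argument to $\chi^{-1}$ and $\zeta^{-1}$ gives the reverse inclusion, whence $\zeta(\Theta) = \Theta$.

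The main obstacle will be justifying the flop-symmetry claim ``$\Sing(X_{i-1}) \cap C_i \neq \emptyset$ iff $\Sing(X_i) \cap C_i^+ \neq \emptyset$''. I would derive this from the classification of three-dimensional terminal flops via Koll\'ar's description in terms of the Dynkin data of the Du Val singularity cut out by a general hyperplane section of~$Y_i$; in this description, the two sides of a flop carry matching singularity patterns along their exceptional loci, so that one side is smooth along its flopping curve precisely when the other is.
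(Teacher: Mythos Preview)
Your approach is the paper's: factor $\chi$ into flops over $S$ and track $f_i(\Sing(X_i))$ through each step. The paper's entire proof is the single citation \cite[Theorem~2.4]{Kollar:flops}, which says that a three-dimensional terminal flop preserves the analytic singularity types of the ambient variety. Since terminal singularities are isolated and the flop is an isomorphism away from $C_i$, this immediately dispatches what you call ``the main obstacle'': the singular points of $X_{i-1}$ off $C_i$ correspond bijectively (with matching types) to those of $X_i$ off $C_i^+$, so by counting, $\Sing(X_{i-1})$ meets $C_i$ iff $\Sing(X_i)$ meets $C_i^+$. No Dynkin-data analysis of the general hyperplane section is needed---just cite Koll\'ar's theorem directly.

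One bookkeeping slip: the MMP in the proof of Proposition~\ref{prop:pseudo-flips} is run over $S$ starting from $f$, so $f_i \comp \chi_i = f_{i-1}$ and hence $f_n \comp \chi = f$; combined with $f' \comp \chi = \zeta \comp f$ this gives $f_n = \zeta^{-1} \comp f'$, not $f_n = f'$. Your chain of equalities in step~2 therefore yields $\Theta = f_n(\Sing(X_n)) = \zeta^{-1}\bigl(f'(\Sing(X'))\bigr)$, i.e.\ $\zeta(\Theta) = f'(\Sing(X'))$ already. In the only application (Lemma~\ref{lemma2}, where $X' = X$ and $f' = f$) the right-hand side is $\Theta$, and your third paragraph becomes unnecessary.
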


\begin{proof}
A flop preserves the analytic types of singularities of the ambient variety, see~\mbox{\cite[Theorem~2.4]{Kollar:flops}}.
Thus, the required assertion follows from Proposition~\ref{prop:pseudo-flips}.
\end{proof}

The following fact is well-known.

\begin{lemma}[{see e.g. \cite[Lemma~1.8]{Kulikov:codim2}}]
\label{lemma:ind-new}
Let $X$ be a three-dimensional compact
complex variety with at worst terminal singularities,
and let $\chi\colon X \dashrightarrow X$ be a pseudoautomorphism.
Then any irreducible component of the indeterminacy
locus  $\Ind(\chi)$ is a rational curve.
\end{lemma}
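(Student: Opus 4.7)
The plan is to resolve the graph of $\chi$ to a smooth compact complex manifold $Y$ with proper bimeromorphic morphisms $p,q\colon Y\to X$ satisfying $\chi=q\comp p^{-1}$, and then exploit the pseudoautomorphism condition together with the terminal singularity hypothesis. Let $C$ be an irreducible component of $\Ind(\chi)$; since $\chi$ is a pseudoautomorphism, $\Ind(\chi)$ has codimension at least two in $X$, so $C$ has dimension at most one, and we treat the nontrivial case $\dim C=1$. Because $C\subset\Ind(\chi)$, there is an irreducible $p$-exceptional divisor $E\subset Y$ with $p(E)=C$. Running the argument from the proof of Lemma~\ref{lemma:pseudoaut} with $\chi^{-1}$ in place of $\chi$ (equivalently, using directly that $\chi^{-1}$ contracts no divisor), one sees that every $p$-exceptional divisor is simultaneously $q$-exceptional; in particular $\dim q(E)\le 1$.

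The next task is to extract rationality from the terminal hypothesis. Three-dimensional terminal singularities are isolated, so at a general point $c\in C$ the variety $X$ is smooth, and the fiber $p^{-1}(c)\subset Y$, viewed as a fiber of a proper birational morphism between smooth three-folds over a smooth point, is a connected tree of smooth rational curves. Because $\chi$ is undefined at $c$, the image $q(p^{-1}(c))$ must be positive-dimensional, and hence among the irreducible components of $p^{-1}(C)$ one can select $E$ so that $q$ does not collapse the general $p|_E$-fiber to a point. For such $E$, the image $\Gamma\coloneqq q(E)$ is a curve, the fibrations $p|_E\colon E\to C$ and $q|_E\colon E\to\Gamma$ are distinct, and a general fiber of $p|_E$ is a tree of rational curves mapping dominantly onto $\Gamma$ under $q|_E$. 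Hence $\Gamma$ is rational. Applying the symmetric argument to $\chi^{-1}$, in which $\Gamma$ is a component of $\Ind(\chi^{-1})$ and $C=p(E)$ is the corresponding image, shows that $C$ is likewise rational.

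The main obstacle lies in the second step: one needs the classical fact that the fibers of a birational modification of a smooth three-fold over a smooth point are trees of rational curves, together with the bookkeeping required to isolate, among possibly several divisors $E_i$ with $p(E_i)=C$, one on which the rulings $p|_{E_i}$ and $q|_{E_i}$ genuinely differ; the alternative would force $\chi$ to admit an extension across $c$, contradicting $c\in\Ind(\chi)$.
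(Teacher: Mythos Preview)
The paper does not prove this lemma; it simply records it as well known and cites Kulikov. Your argument therefore supplies what the paper omits, and it follows the classical line of reasoning: resolve the graph, use the pseudoautomorphism hypothesis (via the negativity computation behind Lemma~\ref{lemma:pseudoaut}) to force every $p$-exceptional divisor to be $q$-exceptional, and then exploit the fact that three-dimensional terminal singularities are isolated so that over a general point of $C$ one is dealing with a proper birational morphism of smooth threefolds, whose one-dimensional fibers have only rational components.

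Two small points are worth tightening. First, ``connected tree of smooth rational curves'' is stronger than what is needed or immediately available; what you actually use, and what holds, is that each irreducible component of such a fiber is a rational curve. Second, the appeal to ``the symmetric argument to $\chi^{-1}$'' is correct in spirit but opaque as written: it is not automatic that running the construction starting from $\Gamma\subset\Ind(\chi^{-1})$ selects the same divisor $E$ and hence lands back on $C$. It is cleaner to stay with the fixed $E$ and argue directly: a general point of $\Gamma=q(E)$ is smooth in $X$ (again because terminal singularities are isolated), so the general fiber of $q|_E$ is likewise a union of rational curves; since you have already noted that the fibrations $p|_E$ and $q|_E$ on $E$ are distinct, this general $q|_E$-fiber dominates $C$ under $p$, and rationality of $C$ follows at once. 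Your final paragraph already contains the key observation that the two rulings differ, so once phrased this way the argument is complete.
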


\section{More on compact complex surfaces}
\label{section:more}

In this section we make some additional observations on compact complex surfaces and their automorphism groups.

\begin{lemma}
\label{lemma:chi=0}
Let $S$ be a smooth compact K\"ahler surface
with $0\le \kod(S)\le 1$ and $\q(S)>0$.
Then~\mbox{$\chit(S)=0$} if and only if $S$ contains no rational curves.
\end{lemma}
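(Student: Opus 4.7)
The plan is to reduce to the case where $S$ is minimal, then invoke the Kodaira--Enriques classification and combine it with Noether's formula and Kodaira's canonical bundle formula in the properly elliptic case.

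First I would reduce to the minimal case. Since $\kod(S)\ge 0$, the surface $S$ has a minimal model $S_0$ obtained by contracting some number $k\ge 0$ of $(-1)$-curves, and $\chit(S)=\chit(S_0)+k$ as each blowdown drops $\chit$ by one. If $k\ge 1$, the exceptional curve is rational and $\chit(S)>0$, so both sides of the equivalence already hold; the assertion therefore reduces to minimal $S$. Combined with $0\le\kod(S)\le 1$ and $\q(S)>0$, the Kodaira--Enriques classification then leaves three possibilities for $S$: an abelian surface, a bielliptic surface, or a minimal properly elliptic surface with its canonical elliptic fibration $\pi\colon S\to B$.

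For the two Kodaira-dimension-zero cases one has $\chit(S)=0$ directly from the classification, and neither surface contains a rational curve. For an abelian surface this is immediate since every holomorphic map $\PP^1\to S$ is constant. For a bielliptic surface one uses the defining finite \'etale cover $E_1\times E_2\to S$: any putative rational curve on $S$ lifts to a rational curve on the cover (as $\PP^1$ is simply connected), yet a product of elliptic curves has no rational curves.

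For a minimal properly elliptic $\pi\colon S\to B$, Noether's formula with $K_S^2=0$ gives $\chit(S)=12\chi(\OOO_S)$, and Kodaira's formula further expresses $\chit(S)=\sum_v \chit(\pi^{-1}(v))$ summed over points with non-generic fiber, each summand being nonnegative and vanishing exactly when $\pi^{-1}(v)$ is a smooth elliptic curve (possibly with multiplicity). Thus $\chit(S)>0$ precisely when $\pi$ has a genuinely singular fiber, whose components are then rational by Kodaira's classification of singular elliptic fibers, proving one direction. Conversely, assume $\chit(S)=0$, so every fiber of $\pi$ is smooth elliptic (possibly multiple). If $\g(B)\ge 1$, any rational curve on $S$ is constant under $\pi$ and hence contained in the reduction of a single fiber, which is elliptic --- impossible. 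If $B=\PP^1$, the canonical bundle formula combined with $\kod(S)=1$ and $\chi(\OOO_S)=0$ forces the multiplicities $m_i$ of the multiple fibers to satisfy $\sum_i(1-1/m_i)>2$; I then take a finite branched cover $\tilde B\to\PP^1$ ramified to order exactly $m_i$ over each multiple-fiber point (which exists by Riemann's existence theorem). The base change $\tilde S\to S$ is \'etale, the induced map $\tilde S\to\tilde B$ is a smooth elliptic fibration with no multiple fibers, and Riemann--Hurwitz together with the above inequality yields $\g(\tilde B)\ge 2$. Applying the higher-genus subcase to $\tilde S$ and lifting any rational curve from $S$ through the \'etale cover completes the proof.

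The main technical obstacle lies in the $B=\PP^1$ sub-case: one must produce the ramified cover with the prescribed local monodromies and verify that matching the ramification indices to the fiber multiplicities makes the base change simultaneously smooth and \'etale over $S$.
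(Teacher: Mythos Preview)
Your argument is correct and matches the paper through the reduction to the minimal model, the $\kod=0$ case, and the properly elliptic case with $\g(B)\ge 1$. The genuine divergence is in the sub-case $\kod(S)=1$, $B\cong\PP^1$. The paper exploits the hypothesis $\q(S)>0$ directly: the Albanese map $\alb\colon S\to A$ cannot be constant on the fibers of the pluricanonical fibration, which forces every fiber to be of type ${}_m\mathrm{I}_0$ (so $\chit(S)=0$ is in fact automatic here); the paper then shows $\q(S)=1$, computes $\rb_2(S)=2$ to see that $\alb$ has irreducible fibers, uses an Euler-characteristic count to prove $\alb$ is smooth, and finally rules out rational curves by adjunction along~$\alb$. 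Your route instead assumes $\chit(S)=0$, reads off $\sum_i(1-1/m_i)>2$ from Kodaira's canonical bundle formula (with $\chi(\OOO_S)=0$ via Noether), invokes the Bundgaard--Nielsen--Fox solution of Fenchel's problem to produce a cover $\tilde B\to\PP^1$ with ramification index exactly $m_i$ over each $p_i$ (possible since $r\ge 3$, so the orbifold is good and hyperbolic), and then reduces to the higher-genus case via the \'etale base change $\tilde S\to S$. Both approaches are valid; the paper's stays inside the Albanese framework used throughout and yields the slightly sharper statement that $\chit(S)=0$ holds unconditionally once $B\cong\PP^1$ and $\q(S)>0$, whereas your covering trick is more self-contained and in fact does not use $\q(S)>0$ in this sub-case at all. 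Your closing caveat about the \'etale base change is apt: the local model of a ${}_{m}\mathrm{I}_0$ fiber as $(E\times\Delta)/(\ZZ/m)$ shows that matching the base ramification to $m_i$ makes the normalized fiber product $E\times\tilde\Delta$, so $\tilde S\to S$ is indeed \'etale and $\tilde S\to\tilde B$ is smooth.
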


\begin{proof}
We may assume that $S$ is minimal.
Indeed, according to the Enriques--Kodaira classification (see e.g.~\mbox{\cite[\S\,VI.1]{BHPV}}),
every minimal surface of non-negative Kodaira dimension has a non-negative
topological Euler characteristic. Thus, any non-minimal compact complex surface
of non-negative Kodaira dimension has a positive topological Euler characteristic
(and contains rational curves).

Suppose that $\kod(S)=0$. Since $\q(S)>0$, we conclude that $S$ is either
a complex torus, or a bielliptic surface.
In both of these cases $\chit(S)=0$ and $S$ contains no rational curves.

Suppose that $\kod(S)=1$.  Let $\kf\colon S\to B$ be the pluricanonical map; thus, $\kf$ is an elliptic fibration
over a curve $B$.
If the genus $\g(B)>0$, then all rational curves on $S$ are contained in a finite set of fibers of~$\kf$,
and $\chit(S)=\sum\chit(F_i)$, where $F_i$ are degenerate fibers.
Clearly, $\chit(S)>0$ if and only if~$\kf$ has a fiber that is not an elliptic curve.
Such a fiber is a union of rational curves.

Assume that $B\cong \PP^1$. We are going to show that in this case $\chit(S)=0$ and $S$ does not contain rational curves.
Indeed, the Albanese map $\alb\colon S\to A$ cannot be constant on the fibers of~$\kf$, because otherwise it would factor through
the Albanese map of~$\PP^1$, which is impossible since~\mbox{$\q(S)>0$}.
Therefore, any fiber $S_b$, $b\in B$, of the map $\kf$ does not contain rational curves; in other words, it is of type ${}_m\mathrm{I}_0$
in the Kodaira classification of degenerate fibers of elliptic fibrations (see for instance~\mbox{\cite[\S\,V.7]{BHPV}}).
Hence, one has
$$
\chit(S)=\sum\limits_{b\in B}\chit(S_b)=0.
$$

We claim that $\q(S)=1$. Indeed, for a typical fiber $S_o$ of $\kf$
the image $\alb(S_o)$ is an elliptic curve on $A$. Consider the quotient map
$A\to A'=A/\alb(S_o)$. The image of $\alb(S_o)$ is a point on $A'$, and so for any fiber $S_b$ of $\kf$ the image
of $\alb(S_b)$ on $A'$ is also a point. Thus, the image of $\alb(S)$ on $A'$ is a curve dominated by the
(rational) base of the fibration $\kf$. Since $A'$ contains no rational curves, we conclude that the image of
$\alb(S)$ on $A'$ is a point.
Hence $\alb(S)=\alb(S_o)=A$.

Therefore, $A$ is an elliptic curve
and $\alb$ has connected fibers, see Remark~\ref{remark:q-1-connected-fibers}.
Since $S$ is a K\"ahler surface,
we know that $\rb_1(S)=\rb_3(S)=2\q(S)$ and
\[
\rb_2(S)=\chit(S)-2+4\q(S)=2.
\]
Hence all the fibers of $\alb$ are irreducible.

Let $F_1,\dots, F_r$ be all the singular  fibers of $\alb$,
let $m_1,\dots, m_r$ be their multiplicities, and let $F$ be a typical fiber of $\alb$.
Thus, $F$ is numerically equivalent to $m_iF_i$, and $\chit(F)=2-2\g(F)=-K_S\cdot F$.
Similarly, one has
$$
\chit(F_i)\ge 2-2\p(F_i)=-K_S\cdot F_i=-\frac{1}{m_i}K_S\cdot F.
$$
Thus we have
\[
 0=\chit(S)=\sum\limits_{i}\big(\chit(F_i)-\chit(F)\big)\ge
 K_S\cdot F \cdot \sum\limits_{i} \left(1-\frac 1{m_i}\right).
\]
Since $S$ is not covered by rational curves, we know that $K_S\cdot F>0$. This shows that $m_i=1$ and~\mbox{$\chit(F_i)= 2-2\p(F_i)$} for all~$i$.
Therefore, $\alb$ is a smooth morphism.
Now if $S$ contains a rational curve $C$, then $C$
must be a (smooth) fiber of $\alb$. But then
$$
-2=2\g(C)-2=K_S\cdot C=K_S\cdot F\ge 0,
$$
which is a contradiction.
\end{proof}

The next result was proved in \cite{Prokhorov-Shramov-BFS}.

\begin{proposition}[{\cite[Corollary~4.1]{Prokhorov-Shramov-BFS}}]
\label{proposition:BFS-for-Kahler-surfaces}
Let $S$ be a compact K\"ahler surface with~\mbox{$\kod(S)\ge 0$}.
Suppose that the group $\Bim(S)$
has unbounded finite subgroups. Then either $\kod(S)=1$, or $S$ is bimeromorphic to a complex torus or a bielliptic surface.
\end{proposition}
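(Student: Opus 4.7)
The plan is to pass to a smooth minimal model $S_0$ of $S$, which exists and is unique in the bimeromorphism class since $\kod(S)\ge 0$ and $S$ is K\"ahler. Then $\Bim(S) = \Bim(S_0) = \Aut(S_0)$, so it suffices to show that if $\Aut(S_0)$ has unbounded finite subgroups, then $\kod(S_0) = 1$ or $S_0$ is a two-dimensional complex torus or a bielliptic surface. I would argue case by case using the Enriques--Kodaira classification of compact K\"ahler surfaces.

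The case $\kod(S_0) = 2$ is immediate from Theorem~\ref{theorem:general-type}, which gives $\Bim(S_0)$ finite; and the case $\kod(S_0) = 1$ already matches the desired conclusion. Thus the main work lies in the case $\kod(S_0) = 0$. The Enriques--Kodaira classification identifies $S_0$ with a K3 surface, an Enriques surface, a two-dimensional complex torus, or a bielliptic surface. The last two are in the statement, so the task reduces to showing that $\Aut(S_0)$ has bounded finite subgroups in the K3 and Enriques cases.

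For a K3 or an Enriques surface $S_0$, I would use that $H^0(S_0, T_{S_0}) = 0$, so $\Aut(S_0)$ is a discrete group, together with the standard fact that the natural action of $\Aut(S_0)$ on the finitely generated abelian group $H^2(S_0, \mathbb{Z})$ has finite kernel (indeed, trivial kernel for K3 by the global Torelli theorem, and finite kernel for Enriques by analysing the canonical K3 double cover). Given such an embedding of $\Aut(S_0)$ into $\Aut(H^2(S_0,\mathbb{Z}))$ up to a finite kernel, Corollary~\ref{corollary:Minkowski} and Lemma~\ref{lemma:group-theory} yield that $\Aut(S_0)$ has bounded finite subgroups, contradicting the assumption.

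The principal obstacle is establishing the virtual faithfulness of the action on $H^2$ for K3 and Enriques surfaces. While classical, this step is genuinely nontrivial and depends on deep results such as the Torelli theorem, whereas the rest of the argument is a bookkeeping exercise with the Enriques--Kodaira classification and the elementary group-theoretic lemmas from Section~\ref{sect:prelim}.
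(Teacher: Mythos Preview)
The paper does not prove this proposition at all; it is imported verbatim from \cite[Corollary~4.1]{Prokhorov-Shramov-BFS} and stated without proof, so there is no in-paper argument to compare against.

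Your sketch is the standard argument and is correct in outline. Two minor comments. First, the reference to Lemma~\ref{lemma:group-theory} in your final step is off: that lemma transfers the Jordan property, not boundedness of finite subgroups. What you actually need is the trivial observation that an extension of a group with bounded finite subgroups by a finite group again has bounded finite subgroups; no named lemma from the paper is required. Second, rather than invoking the global Torelli theorem to control the kernel of the action on $H^2$, one can more economically use the Fujiki--Lieberman theorem (the kernel of the $\Aut(S_0)$-action on $H^2(S_0,\mathbb{R})$ contains $\Aut^0(S_0)$ with finite index) together with $H^0(S_0,T_{S_0})=0$ for K3 and Enriques surfaces; this is the approach taken in \cite{Prokhorov-Shramov-BFS} and avoids the heavier input you flag as the principal obstacle.
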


The following fact is a version of Proposition~\ref{proposition:BFS-for-Kahler-surfaces} for automorphism groups.

\begin{proposition}
\label{prop:bfs}
Let $S$ be a minimal compact K\"ahler
surface with $\kod(S)\ge 0$.
Suppose that the group $\Aut(S)$ has unbounded finite subgroups.
Then $\q(S)>0$ and $\chit(S)=0$. Moreover,
$S$ is either a  complex torus,  or a bielliptic surface, or a surface with~\mbox{$\kod(S)=1$}.
\end{proposition}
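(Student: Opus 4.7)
The plan is to reduce to Proposition \ref{proposition:BFS-for-Kahler-surfaces} applied to $\Bim(S) \supseteq \Aut(S)$, and then analyse each of its three output cases using the minimality of $S$. Since $\Aut(S)$ has unbounded finite subgroups, so does $\Bim(S)$, and the cited proposition yields that either $S$ is bimeromorphic to a complex torus, or $S$ is bimeromorphic to a bielliptic surface, or $\kod(S) = 1$.

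In the first two cases, uniqueness of the smooth minimal model within the bimeromorphic class of a surface with $\kod \ge 0$ (a classical consequence of the Enriques--Kodaira classification) forces $S$ itself to be a complex torus or a bielliptic surface. Both types satisfy $\q(S) > 0$ (equal to $2$ or $1$ respectively) and $\chit(S) = 0$ directly from their definitions, matching the conclusion of the proposition.

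It remains to handle $\kod(S) = 1$, where the canonical pluricanonical elliptic fibration $\kf\colon S \to B$ is $\Aut(S)$-equivariant. Once $\q(S) > 0$ has been established, the equality $\chit(S) = 0$ will follow from Lemma \ref{lemma:chi=0} provided $S$ contains no rational curves. Any rational curve must be contracted by $\alb\colon S \to \Alb(S)$, and since $\q(S) > 0$ and $S$ is not uniruled, one verifies that only finitely many irreducible curves are contracted by $\alb$; the rational curves on $S$ therefore form a finite $\Aut(S)$-invariant set, so a bounded-index subgroup of any finite $G \subset \Aut(S)$ stabilises some rational curve $R \cong \PP^1$. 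Since every finite subgroup of $\mathrm{PGL}_2(\CC)$ contains a subgroup of index at most $60$ with a fixed point on $\PP^1$, a further bounded-index subgroup of $G$ has a fixed point on $R \subset S$, and Theorem \ref{theorem:fixed-point-BFS} then uniformly bounds $|G|$, contradicting the unboundedness hypothesis and forcing $S$ to have no rational curves.

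The hard part will be establishing $\q(S) > 0$ in the $\kod(S) = 1$ subcase. If $\q(S) = 0$, then $B \cong \PP^1$, and the image of $\Aut(S)$ in $\mathrm{PGL}_2(\CC) = \Aut(B)$ preserves the finite set of base points carrying singular or multiple fibres; via the canonical bundle formula and the hypothesis $\kod(S) = 1$, this set has at least three elements, so the image has bounded finite subgroups. The delicate remaining step is to bound the kernel of $\Aut(S) \to \Aut(B)$, which acts fibrewise: the non-isotriviality of $\kf$ (forced by $\kod(S) = 1$ over $\PP^1$) should restrict global fibrewise automorphisms to a bounded group via the relative automorphism scheme of the associated Jacobian fibration, using that the group of sections of a non-isotrivial elliptic fibration over $\PP^1$ is finite modulo translations. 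Combining these two bounds contradicts the unboundedness of finite subgroups of $\Aut(S)$, completing the proof.
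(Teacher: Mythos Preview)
Your reduction to Proposition~\ref{proposition:BFS-for-Kahler-surfaces} and the handling of the torus/bielliptic cases are fine, but the treatment of the $\kod(S)=1$ case has a genuine gap and is considerably more convoluted than necessary.

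The gap is in your argument that $\q(S)>0$. You assume $\q(S)=0$, so $B\cong\PP^1$, and try to bound both the image and the kernel of $\Aut(S)\to\Aut(B)$. For the kernel you invoke ``non-isotriviality of $\kf$ (forced by $\kod(S)=1$ over $\PP^1$)'', but this is \emph{false}: isotrivial examples exist. Take a hyperelliptic curve $C$ of genus $g\ge 2$ with involution $\tau$, any elliptic curve $E$, and let $\ZZ/2$ act on $E\times C$ by $(-1,\tau)$; the minimal resolution $S$ of the quotient is a smooth minimal surface with $\kod(S)=1$, $\q(S)=0$, and an isotrivial elliptic fibration over $\PP^1=C/\tau$. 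Your subsequent appeal to a ``relative automorphism scheme'' and the remark that ``the group of sections \ldots\ is finite modulo translations'' are too vague to constitute a proof (the Mordell--Weil group can have positive rank, and ``modulo translations'' is not a meaningful operation here). You yourself flag this step as ``delicate'' and write ``should restrict'', which signals that the argument is incomplete.

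There is also a smaller slip: when $\q(S)=1$ the Albanese map $\alb\colon S\to A$ has one-dimensional fibers, so infinitely many irreducible curves are contracted by $\alb$; what you need (and what is true) is that $S$ contains only finitely many \emph{rational} curves.

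The paper's route is much shorter and avoids all of this. In the $\kod(S)=1$ case it looks directly at the fibers of the pluricanonical fibration $\kf$: if some fiber is not of type ${}_m\mathrm{I}_0$, its reduction has a singular point, a finite-index subgroup of $\Aut(S)$ fixes it, and Theorem~\ref{theorem:fixed-point-BFS} bounds the finite subgroups --- contradiction. Otherwise every fiber has vanishing Euler characteristic, so $\chit(S)=0$; Noether's formula then gives $\chi(\OOO_S)=0$, hence $\q(S)=1+\pg(S)>0$. This single dichotomy yields $\chit(S)=0$ and $\q(S)>0$ simultaneously, whereas you attempt to prove $\q(S)>0$ first (the harder direction) and only then deduce $\chit(S)=0$ via Lemma~\ref{lemma:chi=0}. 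Note that the paper's fixed-point idea would also plug your gap: under $\q(S)=0$ one has $\chit(S)=12\chi(\OOO_S)\ge 12$, so some fiber is not ${}_m\mathrm{I}_0$, and its singular locus gives the kernel a fixed point.
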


\begin{proof}
One has $\kod(S)<2$ by Theorem~\ref{theorem:general-type}.
If $\kod(S)= 0$, then by Proposition~\ref{proposition:BFS-for-Kahler-surfaces}
the surface $S$ is either a complex torus,
or a bielliptic surface.
In both of these cases the required assertions clearly hold.

Suppose that $\kod(S)=1$. Let $\kf\colon S\to B$ be the pluricanonical fibration.
If $\kf$ has a fiber~\mbox{$S_b=\kf^{-1}(b)$} over some point $b\in B$ which is not of type ${}_m\mathrm{I}_0$,
then a finite index subgroup $\Gamma\subset \Aut(S)$ fixes a singular point
of $F_\red$. Since $S$ is K\"ahler, this implies that $\Aut(S)$ has bounded finite subgroups
by Theorem~\ref{theorem:fixed-point-BFS}.
Thus we may assume that all the fibers of $\kf$ are of type ${}_m\mathrm{I}_0$.
Hence, one has~\mbox{$\chit(S)=0$}. By Noether formula this gives $\chi(\OOO_S)=0$, so that~\mbox{$\q(S)=1+\pg(S)>0$}.
\end{proof}

\section{Proof of the main theorem}
\label{section:proof}

In this section we complete the proof of Theorem~\ref{theorem:main}.

\begin{lemma}
\label{lemma1}
Let $X$ be a three-dimensional compact K\"ahler variety with $\kod(X)\ge 0$.
Suppose that there exists a dominant $\Bim(X)$-equivariant meromorphic map~\mbox{$f\colon X  \dashrightarrow Z$} to a smooth projective curve $Z$ of positive genus.
Then $\Bim(X)$ is Jordan.
\end{lemma}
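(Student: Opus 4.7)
The plan is to invoke the equivariant fibration corollaries from Section~\ref{section:equivariant}, applied to $\alpha=f$. Since $f$ is $\Bim(X)$-equivariant, one has $\Bim(X;f)=\Bim(X)$, so it suffices to control the image \mbox{$G=h_f(\Bim(X))\subset\Aut(Z)$}.

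First, if $g(Z)\ge 2$, then $\Aut(Z)$ is finite; in particular $G$ has bounded finite subgroups. Corollary~\ref{corollary:3-fold-over-elliptic-curve-fixed-point} then immediately yields that $\Bim(X)$ is Jordan, finishing this case.

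Next, suppose $g(Z)=1$, so that $Z$ is an elliptic curve. The plan is to apply Corollary~\ref{corollary:3-fold-over-elliptic-curve-fixed-point} by exhibiting a non-empty finite subset of~$Z$ preserved by $G$. After passing to a smooth bimeromorphic model of $X$ the map~$f$ becomes holomorphic by Proposition~\ref{proposition:torus-holomorphic}. A natural candidate for such an invariant set is the discriminant locus $\Sigma\subset Z$ consisting of points over which the fiber of $f$ is non-generic (for instance not smooth, not reduced, or having an abnormal number of irreducible components). By equivariance $\Sigma$ is preserved by $G$, and being a proper closed analytic subset of the curve $Z$ it is finite. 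If $\Sigma\ne\emptyset$, Corollary~\ref{corollary:3-fold-over-elliptic-curve-fixed-point} applies and we are done.

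The main obstacle is the residual sub-case in which $g(Z)=1$ and $\Sigma=\emptyset$, i.e.,~$f$ is a smooth proper submersion with generic fibers everywhere; then $G$ may well be an infinite subgroup of $\Aut(Z)$ containing arbitrarily many translations, and no $G$-invariant finite subset of $Z$ exists. To deal with this I would invoke the Albanese morphism $\alb\colon X\to\Alb(X)$, through which $f$ factors by Proposition~\ref{proposition:torus-holomorphic} and the universal property of the Albanese, and then either apply Corollary~\ref{corollary:over-torus} when $\dim\Alb(X)\ge 2$ (searching for an invariant proper subvariety arising from the induced fibration $\Alb(X)\to Z$), or carry out a direct analysis of smooth elliptic fibrations $X\to Z$ when $\dim\Alb(X)=1$, using the structure of smooth surface fibers together with Theorem~\ref{theorem:surface} to bound the finite subgroups of the kernel $\Bim(X)_f$ via Lemma~\ref{lemma:fiberwise-embedding} and Lemma~\ref{lemma:group-theory-2}.
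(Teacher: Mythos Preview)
Your overall strategy---reduce to Corollary~\ref{corollary:3-fold-over-elliptic-curve-fixed-point} by producing a $G$-invariant finite subset of~$Z$---is natural, but the key step contains a genuine gap: the assertion ``by equivariance $\Sigma$ is preserved by $G$'' is \emph{not} automatic. Elements of $\Bim(X)$ act only bimeromorphically, so a map $\gamma$ may have indeterminacy along a singular fiber and send it to a smooth one. Concretely, take $X'=\mathrm{Bl}_{(s_0,e_0)}(S\times E)$ with $S$ a K3 surface and $E$ elliptic; the projection $f\colon X'\to E$ has a unique non-generic fiber over~$e_0$, yet every translation of~$E$ lifts to a bimeromorphic selfmap of $X'$ moving~$e_0$ freely. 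So $\Sigma=\{e_0\}$ is not $h_f(\Bim(X'))$-invariant, and your case $\Sigma\neq\varnothing$ does not conclude.

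The paper repairs exactly this by first running the MMP so that $X$ has terminal $\QQ$-factorial singularities with $\upomega_X$ nef; then $\Bim(X)$ acts by pseudoautomorphisms and, by Lemma~\ref{lemma:ind-new}, every component of an indeterminacy locus is a rational curve. Assuming $\Bim(X)$ is not Jordan, Lemma~\ref{lemma:bounded-fiber} forces a typical fiber $F$ to have $\Aut(F)$ with unbounded finite subgroups, and then Proposition~\ref{prop:bfs} together with Lemma~\ref{lemma:chi=0} shows that \emph{every} smooth fiber contains no rational curves. Hence each $\gamma\in\Bim(X)$ is a biholomorphism near every smooth fiber, so smooth fibers go to smooth fibers and the set of singular fibers \emph{is} $G$-invariant. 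If there are singular fibers one finishes via Corollary~\ref{corollary:3-fold-over-elliptic-curve-fixed-point}; if there are none, then $\Bim(X)=\Aut(X)$ on the minimal model and Theorem~\ref{theorem:Kim} applies. This last point also shows that your residual sub-case $\Sigma=\varnothing$ cannot be handled by your proposed route: Theorem~\ref{theorem:surface} only gives that $\Bim(F')$ is strongly Jordan, not that it has bounded finite subgroups, so Lemma~\ref{lemma:group-theory-2} is not available; and Corollary~\ref{corollary:over-torus} needs an invariant proper subvariety of the torus, which you have not produced.
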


\begin{proof}
Running the MMP on $X$, we  may assume that $X$ has at worst terminal $\QQ$-factorial singularities and $\upomega_X$ is nef.
Since $Z$ is not a rational curve,
$f$ is holomorphic: otherwise its composition with
the embedding of $Z$ into its Jacobian is a non-holomorphic
map to a complex torus, which is impossible by
Proposition~\ref{proposition:torus-holomorphic}.
Applying the Stein factorization, we may assume that
the fibers of~$f$ are connected.
Thus, any smooth fiber $F$ of $f$ is a minimal surface of non-negative Kodaira dimension; in particular,
one has $\Bim(F)=\Aut(F)$.
Note also that $\Bim(X)$ acts on $Z$ by automorphisms.

Suppose that $\Bim(X)$ is not Jordan.
Then it follows from Lemma~\ref{lemma:bounded-fiber} and Remark~\ref{remark:curve} that for a typical fiber~$F$ of~$f$
the group $\Aut(F)$ has unbounded finite subgroups.
Therefore, by Proposition~\ref{prop:bfs} one has $0\le \kod(F)\le 1$, $\chit(F)=0$, and $\q(F)>0$.
Recall that the topological Euler characteristic is constant in smooth families of compact manifolds.
Hence for \emph{any} smooth fiber~$F$ of~$f$
one has $\chit(F)=0$. Furthermore, since the irregularity of a compact complex surface
is uniquely determined by the first Betti number (see e.g.~\mbox{\cite[Theorem~IV.2.7]{BHPV}}),
it is also constant in smooth
families, so that $\q(F)>0$. Also, we see that $\kod(F)\ge 0$ by the adjunction formula, and~\mbox{$K_F^2=0$}, so that~\mbox{$\kod(F)\le 1$}.
This means that any smooth fiber of $f$ contains no rational curves
by Lemma~\ref{lemma:chi=0}.

Let $F$ be a smooth fiber of $f$, and let $\gamma\in \Bim(X)$ be an arbitrary element.
By Lemma~\ref{lemma:ind-new} any  irreducible component  $C$ of the indeterminacy
locus  $\Ind(\gamma)$ is a rational curve. Since $\g(Z)>0$, the curve $C$ cannot dominate $Z$.
On the other hand, we already know that $C\not\subset F$.
Hence $C$ is disjoint from $F$, i.e. $\gamma$ is holomorphic near $F$.
Then $\gamma(F)$ also does not contain rational curves and
so $\gamma^{-1}$ is holomorphic near $\gamma(F)$.
This implies that $\gamma$ is an isomorphism near $F$.
Hence $\gamma(F)$ is a smooth fiber, i.e. any element $\gamma\in \Bim(X)$
maps smooth fibers to smooth fibers (and vice versa, maps singular fibers to singular fibers).

Now let $F_1,\ldots,F_r$, $r\ge 0$, be all the singular fibers of $f$.
Then $\Bim(X)$ acts biholomorphically on the complement
$X\setminus \bigcup F_i$.
If $r=0$, then $\Bim(X)$ acts on the whole $X$ by
automorphisms.
In this case~\mbox{$\Bim(X)=\Aut(X)$} is Jordan by Theorem~\ref{theorem:Kim}.
If $r>0$, then $\Bim(X)$ permutes the points~\mbox{$f(F_1),\ldots,f(F_r)$}.
Therefore, $\Bim(X)$ is Jordan by
Corollary~\ref{corollary:3-fold-over-elliptic-curve-fixed-point}.
\end{proof}

\begin{lemma}
\label{lemma2}
Let $X$ be a three-dimensional compact K\"ahler variety with $\kod(X)\ge 0$.
Suppose that there exists a dominant $\Bim(X)$-equivariant meromorphic map~\mbox{$f\colon X  \dashrightarrow Z$} to a compact complex surface $Z$ with $\kod(Z)\ge 0$.
Then $\Bim(X)$ is Jordan.
\end{lemma}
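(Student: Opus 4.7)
My plan is to reduce $Z$ to its minimal model and use the surface classification to either show the image of $\Bim(X)$ in $\Aut(Z_{\min})$ has bounded finite subgroups, or to promote $f$ to a $\Bim(X)$-equivariant fibration of $X$ onto a curve of positive genus, after which Lemma~\ref{lemma1} applies.

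First I would compose $f$ with the bimeromorphic contraction $Z\dashrightarrow Z_{\min}$ to the minimal model, using $\Bim(Z)=\Aut(Z_{\min})$ since $\kod(Z)\ge 0$. Replacing $Z$ by $Z_{\min}$, a typical fiber of $f$ is a curve whose components $F'$ satisfy $\kod(F')\ge 0$ by Lemma~\ref{lemma:fiber-kappa}, so $\Bim(F')=\Aut(F')$ is strongly Jordan (Remark~\ref{remark:curve}). Set $H:=h_f(\Bim(X))\subset\Aut(Z)$. If $H$ has bounded finite subgroups, Corollary~\ref{corollary:3-fold-over-surface} applies. Otherwise, Proposition~\ref{prop:bfs} forces $Z$ to be one of: (a) a complex torus, (b) a bielliptic surface, or (c) a minimal K\"ahler surface with $\kod(Z)=1$, $\q(Z)>0$, $\chit(Z)=0$.

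In cases (b) and (c) I would exhibit a canonical $\Aut(Z)$-equivariant morphism $g\colon Z\to B$ to a smooth curve with $\g(B)\ge 1$; then $g\circ f$ is $\Bim(X)$-equivariant and Lemma~\ref{lemma1} applies. For (b) take $g$ to be the Albanese morphism, with target the elliptic curve $\Alb(Z)$ (bielliptic surfaces have $\q=1$). For (c) take $g$ to be the pluricanonical elliptic fibration $\pi\colon Z\to B$ if $\g(B)\ge 1$; when instead $B\cong\PP^1$, the analysis in the proof of Lemma~\ref{lemma:chi=0} shows $\q(Z)=1$ and produces an $\Aut(Z)$-equivariant Albanese morphism from $Z$ to the elliptic curve $\Alb(Z)$, which serves as $g$.

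The main obstacle is case (a), with $Z=T$ a two-dimensional complex torus. Here the natural strategy is to invoke Corollary~\ref{corollary:over-torus} via an $H$-invariant proper subvariety $V\subsetneq T$. Writing $\Aut(T)=T\rtimes\Gamma$ with $\Gamma\subset\mathrm{GL}_4(\ZZ)$, the image of $H$ in $\Gamma$ has bounded finite subgroups by Theorem~\ref{theorem:Minkowski}, so the normal translation subgroup $H\cap T$ carries the unboundedness and its Zariski closure is a closed algebraic subgroup of $T$ preserved by $H$; if its identity component $T_0$ is a proper subtorus, take $V=T_0$. The residual and most delicate sub-case is $T_0=T$, i.e.\ torsion in $H\cap T$ being Zariski dense in $T$, where no invariant proper subvariety is evident; here one would exploit $\q(X)\ge 2$ (from pulling back $1$-forms), invoke Corollary~\ref{corollary:Albanese} when $\q(X)=3$, and when $\q(X)=2$ pass to the $\Bim(X)$-equivariant Albanese fibration $\alb_X\colon X\to\Alb(X)$ and combine an MMP reduction with Corollary~\ref{corollary:nef-pseudo} on a smooth K\"ahler minimal model to conclude.
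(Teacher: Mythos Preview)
Your treatment of the bielliptic and $\kod(Z)=1$ cases is correct and in fact a pleasant variant of the paper's route: where the paper composes with an elliptic fibration $Z\to B$ (with $B$ possibly rational) and cites an external result to bound the image of $\Bim(X)$ in $\Aut(B)$, then applies Corollary~\ref{corollary:3-fold-over-curve}, you instead pass through the Albanese map of $Z$ to reach a curve of positive genus and invoke Lemma~\ref{lemma1}. Both arguments work.

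The torus case, however, has a genuine gap. First, your translation-subgroup step does not produce an $H$-invariant subvariety of $T$: the closure of $H\cap T$ inside $T$ is stable under the \emph{linear} parts of $H$ (this is what conjugation gives), but the action of $H$ on $T$ is affine, so a subtorus $T_0$ through the origin gets translated to cosets and need not be $H$-invariant as a subvariety. More seriously, your residual step ``combine an MMP reduction with Corollary~\ref{corollary:nef-pseudo} on a smooth K\"ahler minimal model'' presupposes that the minimal model is smooth, which is precisely what can fail; Corollary~\ref{corollary:nef-pseudo} (via Theorem~\ref{theorem:Fujiki-pseudo}) is only available for manifolds.

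The paper handles the torus case by a different mechanism that you are missing. It runs the MMP on $X$ at the very start, so that $K_X$ is nef, $\Bim(X)$ acts by pseudoautomorphisms (Corollary~\ref{corollary:pseudoaut}), and $f\colon X\to T$ is holomorphic. If the resulting $X$ is smooth, Corollary~\ref{corollary:nef-pseudo} finishes. If $X$ is singular, one manufactures a $\Bim(X)$-invariant proper subset of $T$ and applies Corollary~\ref{corollary:over-torus}: either $f$ has a two-dimensional fiber, and the images of such fibers form a finite invariant set (a pseudoautomorphism cannot contract a divisor); or all fibers are one-dimensional, in which case every component of $\Ind(\chi)$ is a rational curve (Lemma~\ref{lemma:ind-new}) and hence is contracted by $f$ to a point of the torus, so Proposition~\ref{prop:pseudo-flips} and Corollary~\ref{corollary:Kollar} apply and show that $f(\Sing(X))$ is a finite non-empty $\Bim(X)$-invariant subset of~$T$. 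This flop decomposition and singular-locus invariance is the key idea absent from your proposal.
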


\begin{proof}
Running the MMP, we may assume that the singularities of $X$ are at worst terminal $\QQ$-factorial and $\upomega_X$ is nef.
Then \mbox{$\Bim(X)$} acts on $X$ by pseudo-automorphisms, see Corollary~\ref{corollary:pseudoaut}.
Furthermore, we may assume that $Z$ is a minimal surface.
So, the induced action of~\mbox{$\Bim(X)$} on~$Z$ is biholomorphic.
We may also assume that the group $\Aut(Z)$ has unbounded finite subgroups, because otherwise $\Bim(X)$ is Jordan by
Corollary~\ref{corollary:3-fold-over-surface}.
Note that the surface~$Z$ is K\"ahler by~\mbox{\cite[Theorem~5]{Varouchas}}. Thus, we know from Proposition~\ref{prop:bfs} that
$Z$ is ether a complex torus, or a bielliptic surface, or a surface with $\kod(Z)=1$.

Assume that $\kod(Z)=1$. Let $\psi\colon X\dashrightarrow B$ be the composition of $f$ with the pluricanonical fibration $Z\to B$.
The image of $\Bim(X)$ in $\Aut(B)$
is finite by \cite[Proposition~1.2]{Prokhorov-Shramov-BFS}. Hence~$\Bim(X)$ is Jordan by Corollary~\ref{corollary:3-fold-over-curve}.
The same argument works if $Z$ is a bielliptic surface:
in this case there is an $\Aut(Z)$-equivariant elliptic fibration $Z\to \PP^1$
(see~\mbox{\cite[\S\,V.5]{BHPV}}), and by Kodaira's canonical bundle formula (see~\mbox{\cite[Theorem~V.12.1]{BHPV}})
it has~$3$ or~$4$ multiple fibers.
Hence the image of $\Bim(X)$ in $\Aut(\PP^1)$
is finite and $\Bim(X)$ is again Jordan.

Finally, assume that $Z$ is a complex torus and $\Bim(X)$ is not Jordan.
The map $f$ is holomorphic in this case, see Proposition~\ref{proposition:torus-holomorphic}.

Suppose that $f$ has a two-dimensional fiber.
Denote by $\Sigma$ the image under $f$ of the union of all
two-dimensional fibers of $f$. Then $\Sigma$ is a finite non-empty subset of $Z$.
Since a pseudoautomorphism cannot contract a two-dimensional fiber, we conclude that
$\Sigma$ is invariant under the action of the image of $\Bim(X)$ on $Z$.
Thus, the group $\Bim(X)$ is Jordan by Corollary~\ref{corollary:over-torus}.

Now suppose that all fibers of $f$ are one-dimensional. By
Lemma~\ref{lemma:Ind-in-kf},
the indeterminacy locus of every bimeromorphic selfmap of $X$ is contained
in a finite set of fibers of $f$. Therefore, it follows from Corollary~\ref{corollary:Kollar} that
the group $\Bim(X)$ preserves the image of the singular locus of $X$ on~$Z$. Thus,
$\Bim(X)$ preserves the image of $\Sing(X)$ on $Z$, which is a finite non-empty subset of $Z$ by assumption.
Again by Corollary~\ref{corollary:over-torus},
this implies that the group $\Bim(X)$ is Jordan.
\end{proof}

Now we are ready to prove Theorem~\ref{theorem:main}.

\begin{proof}[Proof of Theorem~\ref{theorem:main}]
Let $\alb\colon X\to A$ be the Albanese map. By Corollary~\ref{corollary:Albanese} we may assume that $\q(X)\le 2$, and by Lemma~\ref{lemma:Albanese} we may assume that~$\alb$ is surjective.
Now the assertion of the theorem is given by Lemma~\ref{lemma1} if $\q(X)=1$, and by Lemma~\ref{lemma2} if $\q(X)=2$.
\end{proof}


\def\cprime{$'$}

\end{document}